\documentclass[12pt,twoside,a4paper]{amsart}

\usepackage{amssymb}
\usepackage{latexsym}
\usepackage{amsmath}
\usepackage{euscript}
\usepackage{graphics}

\textwidth=16.5cm
\textheight=22.5cm
\hoffset=-17mm
\voffset=-16mm
\headheight=14pt
\headsep=22pt


\newcommand{\tA}{\widetilde{A}}

\newcommand{\gH}{{\mathfrak{H}}}
\newcommand{\gN}{{\mathfrak{N}}}

\def\a{\alpha} \def\b{\beta}

\def\C{\mathbb C}
\def\R{\mathbb R}

\def\B{\mbox{\boldmath$B$}}

\newcommand{\D}{{\mathbb{D}}}

\newcommand{\eps}{\varepsilon}
\newcommand{\ff}{\varphi}

\newcommand{\cH}{{\mathcal{H}}}
\newcommand{\cD}{{\mathcal{D}}}

\newcommand{\cI}{{\mathcal{I}}}
\def\l {\lambda} \def\G{\Gamma} \def\g{\gamma}
\def\cd{\cdot} \def\up{\upharpoonright} \def\th{\Theta}
\def\bta{\{\cH_0\oplus \cH_1,\Gamma ^B,\Gamma ^A\}}

\def\wt#1{{{\widetilde #1} }}
\def\wh#1{{{\,\widehat #1\,} }}
\def\ov {\overline}

\def\bm\chi{\mbox{\boldmath$\chi$}}

\def\Ext{{\rm Ext\,}}

\def\ran{{\rm ran\,}}
\def\dom{{\rm dom\,}}

\def\dim{{\rm dim\,}}
\def\loc{{\rm loc}}

\def\rank{{\rm rank\,}}

\let\xker=\ker \def\ker{{\xker\,}}

\def\supp{{\rm supp\,}}

\def\Dma{D_{\rm{max}}} \def\Dmi{D_{{\rm min}}} \def\Dtma{D_{\th}} \def\Dtmi{D_{\th,0}} \def\Dt{D_\th}

\DeclareMathOperator{\im}{Im}

\newtheorem{theorem}{Theorem}[section]
\newtheorem{proposition}[theorem]{Proposition}
\newtheorem{corollary}[theorem]{Corollary}
\newtheorem{lemma}[theorem]{Lemma}
\newtheorem{definition}[theorem]{Definition}

\theoremstyle{definition}

\newtheorem{remark}[theorem]{Remark}

\numberwithin{equation}{section}

\newcommand{\be}{\begin{equation}}
\newcommand{\ee}{\end{equation}}
\newcommand{\beq}{\begin{equation*}}
\newcommand{\eeq}{\end{equation*}}
\newcommand{\ben}{\begin{eqnarray}}
\newcommand{\een}{\end{eqnarray}}
\newcommand{\bea}{\begin{eqnarray*}}
\newcommand{\eea}{\end{eqnarray*}}

\begin{document}
\title[J-selfadjointness ]
{Weyl Solutions and J-selfadjointness for Dirac operators }
\author{B.~Malcolm~Brown}
\author{Martin~Klaus}
\author{Mark~Malamud}
\author{Vadim~Mogilevskii}
\author{Ian~Wood}
\address{
Cardiff School of Computer Science   and  Informatics\\ Cardiff University \\ Queen's Buildings\\ 5 The Parade\\ Cardiff CF24 3AA\\ UK}
\email{Malcolm.Brown@cs.cardiff.ac.uk}
\address{Mathematics Department,
Virginia Tech\\
Blacksburg, VA 24061-0123}
\email{klaus@math.vt.edu}
\address{ Peoples' Friendship University of Russia (RUDN University)\\
6 Miklukho-Maklaya St, Moscow, 117198, Russian Federation}
   \email{malamud3m@gmail.com}
\address{Poltava V.G. Korolenko National Pedagogical University\\Ostrogradski Street 2\\
36000 Poltava\\ Ukraine}
\email{vadim.mogilevskii@gmail.com}
\address{
School of Mathematics, Statistics and Actuarial Science\\ University of Kent\\ Sibson Building\\ Canterbury CT2 7FS\\ UK}
\email{i.wood@kent.ac.uk}

\keywords{Dirac-type operator, $j$-selfadjointness, Weyl solution, Weyl function, dual pair of operators}
\subjclass{Primary 34B20, 34L10; Secondary 47B25.}






     \begin{abstract}
We consider a non-selfadjoint Dirac-type differential expression
        \begin{equation}\label{Abstr_1.1}
D(Q)y:= J_n \frac{dy}{dx} + Q(x)y,
        \end{equation}
with a  non-selfadjoint potential  matrix  $Q \in L^1_{\loc}({\mathcal I},{\C}^{n\times
n})$ and  a signature matrix  $J_n =-J_n^{-1} = -J_n^*\in \C^{n\times n}$. Here
${\mathcal I}$  denotes either the line $\R$ or the half-line $\R_+$. With this
differential expression one associates in $L^2(\mathcal I,\C^{n})$  the (closed) maximal
and minimal operators $D_{\max}(Q)$ and $D_{\min}(Q)$, respectively. One of our main
results for the whole line case states that $D_{\max}(Q) = D_{\min}(Q)$ in $L^2(\R,\C^{n})$. Moreover, we show
that if the minimal operator $D_{\min}(Q)$ in $L^2(\R,\C^{n})$  is $j$-symmetric with
respect to an appropriate involution $j$, then  it is $j$-selfadjoint.

Similar results are valid in the case of  the semiaxis $\R_+$. In particular, we show
that if $n=2p$ and the minimal  operator $D^+_{\min}(Q)$ in $L^2(\R_+,\C^{2p})$  is
$j$-symmetric, then there exists a $2p\times p$-Weyl-type matrix solution $\Psi(z,
\cdot)\in L^2(\R_+,\mathbb{C}^{2p\times p})$ of the equation $D^+_{\max}(Q)\Psi(z,
\cdot)= z\Psi(z, \cdot)$. A similar result is valid for the expression \eqref{Abstr_1.1}
with a potential matrix having a bounded imaginary part. This leads to the existence of
a unique Weyl function for   the expression \eqref{Abstr_1.1}. The main
results are proven by means of  a reduction to the self-adjoint case  by using the
technique  of dual pairs of operators.

The differential expression  \eqref{Abstr_1.1} is of significance  as it appears in the
Lax formulation of the vector-valued  nonlinear   Schr{\"o}dinger equation.
   \end{abstract}

\maketitle


\bigskip

\section{Introduction} \label{intro}

Let $\mathcal I$ be either the half-line $\R_+=[0,\infty)$ or the whole line $\R$. The
main object of the  paper is  a study of the   general (not necessarily formally self-adjoint) Dirac-type differential  expression
%
        \begin{equation}\label{Intro_1.1}
D(Q)y:= J_n \frac{dy}{dx} + Q(x)y,
        \end{equation}
%
%
with a  potential  matrix  $Q \in L^1_{\loc}({\mathcal I},{\C}^{n\times n})$ and  a
signature matrix  $J_n =-J_n^{-1} = -J_n^*\in \C^{n\times n}$. One   may associate  with this differential
expression  in $L^2(\mathcal I,\C^{n})$  the (closed) maximal  and minimal
operators $D_{\max}(Q)$ and $D_{\min}(Q)$, respectively. The maximal operator
$\Dma(Q)$ is defined by
     \begin{gather*}
\dom \Dma(Q) = \{y\in L^2(\mathcal I,\C^{n}): y\in {AC_{\loc}}(\mathcal I,\C^{n}),\  D(Q)y \in L^2(\mathcal I,\C^{n})\}, \\
 \Dma(Q) y=D (Q)y, \;\;\;y\in\dom\Dma(Q).
     \end{gather*}
%
The minimal operator $D_{\min}(Q)$ is  the closure of the operator $\Dmi'(Q)$, where $\Dmi'(Q)$ is the
restriction of $\Dma(Q)$   to the set of functions $y\in\dom\Dma(Q)$  with compact support
(in the case of the half-line  we mean that     ${\rm supp\,}y \subset (0,\infty))$. Clearly, both  the
operators $ \Dma(Q)$ and $D_{\min}(Q)$  are densely defined.

Assume that $\mathcal I=\R$ and   that the expression \eqref{Intro_1.1} is formally selfadjoint,
that is $Q(x)=Q^*(x)$ (a.e.~on $\R$). Then according to \cite[Sect. 8.6]{LS91}  and
\cite[Theorem 3.2]{LM03} we have $D_{\min}(Q) =D_{\max}(Q)(= D_{\min}(Q)^*)$, i.e., the
minimal  operator is self-adjoint and coincides with the maximal one.

To distinguish between  operators acting on the
half-line $\R_+$  from those acting on the  line   $\R$, we write $D^+_{\max}(Q)$ and $D^+_{\min}(Q)$ instead of $D_{\max}(Q)$
and $D_{\min}(Q)$, respectively. With this notation, in the case of the half-line the
analogue of our previous result     reads as follows: the minimal operator
$D^+_{\min}(Q)$ is in the limit-point case at infinity, i.e. the deficiency indices
$n_{\pm}(D^+_{\min}(Q))$ of $D^+_{\min}(Q)$ are the minimal possible:
    \begin{equation}\label{Intro1.3_Def_indic_Semiax}
n_{\pm}(D^+_{\min}(Q)) = \dim \ker (J_n\pm iI) =: \kappa_{\pm}(J_n) =: \kappa_{\pm},
   \end{equation}
(see \cite[Theorem 5.2]{LM03}).
%
%


We now  reformulate this statement  in terms of a    subspace $\th\subseteq\C^n$  and   define the operators  $D_{\th}^+(Q)$
and $(D_{\th}^+)'(Q)$  to
be  the  restrictions of $\Dma^+(Q)$ to the domains
    \begin{gather*}
\dom  D_{\th}^+(Q) 
=\{y\in\dom \Dma^+(Q): y(0)\in \th \},
  \end{gather*}
and
  \begin{gather*}
\dom (D_{\th}^+)'(Q) =\{y\in \dom D_{\th}^+(Q): \  \supp y \subset [0,\infty)\ \text{and
is compact} \},
 \end{gather*}
respectively. Further   we   denote    by  $D_{\th, 0}^+(Q)$ 
 the closure of
$(D_{\th}^+)'(Q)$.

With this notation  one easily gets that   the minimality condition
\eqref{Intro1.3_Def_indic_Semiax} is  equivalent to
     \begin{equation}\label{Intro1.3A_equal_domains}
D_{\th, 0}^+(Q) = D_{\th}^+(Q)  \quad \text{for any subspace} \quad \th\subseteq\C^n,
      \end{equation}
i.e.~the coincidence of the minimal and maximal operators.

%
%
%
%
%

In the present paper we study  a general Dirac-type expression \eqref{Intro_1.1} with
arbitrary complex-valued $Q \in L^1_{\loc}({\mathcal I},{\C}^{n\times n})$  via the
abstract setting of dual pairs and $j$-selfadjoint operators. One of our main results
reads as follows.
%
%
  \begin{theorem}\label{th1.1_dom-max=dom-min}
For a general Dirac type expression  \eqref{Intro_1.1} on $\R$ with $Q \in
L^1_{\loc}({\R},{\C}^{n\times n})$ the following equality holds:
     \begin{equation}\label{Intro1.4_Main_result}
\Dmi(Q)=\Dma(Q).
     \end{equation}
\end{theorem}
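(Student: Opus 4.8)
The plan is to reduce the non-self-adjoint statement to the formally self-adjoint case recorded before Theorem~\ref{th1.1_dom-max=dom-min} by a doubling construction attached to the dual pair $\{\Dmi(Q),\Dmi(Q^*)\}$. First I would record that the Lagrange (formal) adjoint of $D(Q)$ is $D(Q^*)$: integrating by parts and using $J_n^*=-J_n$ gives $\langle D(Q)y,z\rangle=\langle y,D(Q^*)z\rangle$ for compactly supported $y,z$, whence $\Dmi(Q)^*=\Dma(Q^*)$ and $\Dma(Q)^*=\Dmi(Q^*)$. In particular $\Dmi(Q)\subseteq\Dma(Q)=\Dmi(Q^*)^*$, so $\{\Dmi(Q),\Dmi(Q^*)\}$ is indeed a dual pair, and proving \eqref{Intro1.4_Main_result} amounts to showing that the defect $\dim(\dom\Dma(Q)/\dom\Dmi(Q))$ vanishes.

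Next I would form the $2n\times 2n$ expression on $L^2(\R,\C^{2n})=L^2(\R,\C^n)\oplus L^2(\R,\C^n)$ given by
\[
\mathcal{D}(u,v):=\big(D(Q^*)v,\,D(Q)u\big)=\widetilde{J}\,\frac{d}{dx}\begin{pmatrix}u\\ v\end{pmatrix}+\widetilde{Q}\begin{pmatrix}u\\ v\end{pmatrix},\qquad \widetilde{J}=\begin{pmatrix}0&J_n\\ J_n&0\end{pmatrix},\quad \widetilde{Q}=\begin{pmatrix}0&Q^*\\ Q&0\end{pmatrix}.
\]
A direct check gives $\widetilde{J}^*=-\widetilde{J}$ and $\widetilde{J}^2=-I_{2n}$, so $\widetilde{J}$ is again a signature matrix, while $\widetilde{Q}=\widetilde{Q}^*\in L^1_{\loc}(\R,\C^{2n\times 2n})$ because $Q,Q^*\in L^1_{\loc}$. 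Hence $\mathcal{D}$ is a formally self-adjoint Dirac-type expression of the form \eqref{Intro_1.1} of size $2n$ on $\R$, and the self-adjoint whole-line result quoted above yields $\mathcal{D}_{\min}=\mathcal{D}_{\max}$.

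It then remains to translate this equality back. I would verify that the block-off-diagonal structure makes the graph norm of $\mathcal{D}$ split,
\[
\|(u,v)\|^2+\|\mathcal{D}(u,v)\|^2=\big(\|u\|^2+\|D(Q)u\|^2\big)+\big(\|v\|^2+\|D(Q^*)v\|^2\big),
\]
so that $\dom\mathcal{D}_{\max}=\dom\Dma(Q)\oplus\dom\Dma(Q^*)$ directly from the definition, and, taking graph-norm closures of compactly supported elements componentwise, $\dom\mathcal{D}_{\min}=\dom\Dmi(Q)\oplus\dom\Dmi(Q^*)$. From $\mathcal{D}_{\min}=\mathcal{D}_{\max}$ we obtain $\dom\Dmi(Q)\oplus\dom\Dmi(Q^*)=\dom\Dma(Q)\oplus\dom\Dma(Q^*)$, and since $\dom\Dmi(Q)\subseteq\dom\Dma(Q)$ and $\dom\Dmi(Q^*)\subseteq\dom\Dma(Q^*)$, an elementary argument on internal direct sums forces equality in each summand; in particular $\dom\Dmi(Q)=\dom\Dma(Q)$, which is \eqref{Intro1.4_Main_result}.

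The one step needing care — and the place where the dual-pair bookkeeping really enters — is the identification of $\dom\mathcal{D}_{\min}$ with $\dom\Dmi(Q)\oplus\dom\Dmi(Q^*)$: one must confirm that closing up compactly supported $\C^{2n}$-valued functions in the graph norm introduces no coupling between the two components. This is exactly the additive splitting of the graph norm displayed above, which reduces the two a~priori independent defects $\dim(\dom\Dma(Q)/\dom\Dmi(Q))$ and $\dim(\dom\Dma(Q^*)/\dom\Dmi(Q^*))$ to the single defect of the self-adjoint operator $\mathcal{D}$, known to vanish.
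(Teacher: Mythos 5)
Your proposal is correct and follows essentially the same route as the paper: the doubling to the formally selfadjoint $2n\times 2n$ expression with $\widetilde J=\bigl(\begin{smallmatrix}0&J_n\\ J_n&0\end{smallmatrix}\bigr)$ and off-diagonal $\widetilde Q$, the appeal to the known selfadjoint whole-line result $\mathcal{D}_{\min}=\mathcal{D}_{\max}$, and the componentwise identification of the minimal and maximal domains. The only (cosmetic) difference is that you conclude by direct-sum bookkeeping on the split graph norm, whereas the paper invokes its abstract Proposition \ref{prop2.2}(ii) (the operator $\bigl(\begin{smallmatrix}0&A\\ B&0\end{smallmatrix}\bigr)$ is selfadjoint iff $A=B^*$) to read off $\Dmi(Q)=(\Dmi(Q^*))^*=\Dma(Q)$.
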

In other words equality \eqref{Intro1.4_Main_result} means that the functions from $\dom
\Dma(Q)$ with compact support are dense in $\dom \Dma(Q)$ equipped with the  graph
norm.

By using  Theorem \ref{th1.1_dom-max=dom-min}  we obtain a  criterion for the minimal
operator $\Dmi (Q)$ in $L^2(\R,\C^{n})$  to be $j$-selfadjoint with an appropriate
conjugation $j$ (see Theorem \ref{th3.0.2}).

The latter result generalizes results  by Gesztesy, Cascaval, and Clark from
\cite[Theorem 4]{CG04} and \cite{CG06} (see also  \cite{CGHL04,CG02}).
Namely, assuming  that  $n = 2p$ they  
proved  $j$-selfadjointness of the $j$-symmetric Dirac operator $D_{\min}(Q)$ for the particular case when
%
%
   \be
J_n= \begin{pmatrix}
I_p & 0\\
0 & -I_p\end{pmatrix}, \quad  Q = -i
\begin{pmatrix}
0 & V\\
V^*&0\end{pmatrix} \quad \text{and} \quad  V=V^T \in L^1_{\loc}({\R},{\C}^{p\times p}).
 \ee
%
%

%
%
Note in this connection that such   Dirac-type  operators naturally appear in the Lax
representation of the known $m\times m$ matrix valued AKNS systems (the focusing NLS
equation).

Moreover our above result, i.e. relation \eqref{Intro1.4_Main_result}, covers the
so-called vector NLS equation mentioned as an unsolved problem at the end of
\cite{CG04}, when
\be V= \begin{pmatrix}
           q_1&0&\ldots&0\\
q_2&0&\ldots&0\\
\vdots&\vdots&\ddots&\vdots\\
q_n&0&\ldots&0\\
\end{pmatrix}, \qquad q_j\in L^1_{\loc}(\R), \quad j\in \{1,\ldots,n\}.
\ee
It    naturally  appears  in the Lax representation of the vector NLS equation (cf.
\cite{Cher1996}),
$$
i\mathbf{q}_t + \frac{1}{2} \mathbf{q}_{xx} + \|\mathbf{q}\|^2 \mathbf{q} =0, \quad \mathbf{q} =
(q_1,\ldots, q_n)^{\top}, \quad \|\mathbf{q}\|^2 = \mathbf{q}^* \mathbf{q} = \sum |q_j|^2.
$$

Turning now  to the problem on the semiaxis $\R_+$ we first note that the counterpart of relation
\eqref{Intro1.3A_equal_domains}  remains valid in  the non-selfadjoint case, i.e.
$D_{\th, \min}^+ = D_{\th, \max}^+$ (see Theorem \ref{th3.3}). This result is applied
to prove the following theorem  
on existence  of the  Weyl-type matrix solution to the
homogeneous equation
  \begin{equation}\label{Intro_1.7_hom_equat}
D(Q)y = J_n \frac{dy}{dx} + Q(x)y = zy, \quad x\in \R_+.
  \end{equation}

  \begin{theorem}\label{Th1.2_Intro}
Let the minimal  Dirac operator $D_{\min}^+(Q)$  in $L^2(\R_+,\mathbb{C}^{2p})$  be  $j$-symmetric with respect to an appropriate  conjugation $j$.  Assume in addition that the field of regularity
$\widehat\rho\bigl(D^+_{\min}(Q)\bigr) \not = \emptyset$. Then
%
%
$$
\text{Def}\, D_{\min}^+(Q) := \dim\ker\bigl(D^+_{\max}(Q)-z\bigr)=p \quad \text{for any}
\quad z\in \widehat\rho\bigl(D^+_{\min}(Q)\bigr).
$$
Moreover, for any $z\in \widehat\rho\bigl(D^+_{\min}(Q)\bigr)$   there exists a $2p\times
p$-Weyl-type matrix solution $\Psi_+(z, \cdot)\in L^2(\R_+,\mathbb{C}^{2p\times p})$ of
equation \eqref{Intro_1.7_hom_equat} which can be chosen to be holomorphic in $z$.
      \end{theorem}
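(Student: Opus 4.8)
The plan is to identify $\dim\ker(D^+_{\max}(Q)-z)$ with a genuine defect number of $D^+_{\min}(Q)$, to show it is locally constant on $\widehat\rho(D^+_{\min}(Q))$, and then to pin down its value by a reduction to the whole line. First, integration by parts together with $J_n^*=-J_n$ gives $D^+_{\min}(Q)^*=D^+_{\max}(Q^*)$, so that $\ker(D^+_{\max}(Q^*)-\bar z)=(\ran(D^+_{\min}(Q)-z))^{\perp}$ has dimension $\mathrm{def}(D^+_{\min}(Q)-z)$. The conjugation $j$ implementing the $j$-symmetry is an antilinear isometry of $L^2(\R_+,\C^{2p})$ carrying $L^2$-solutions of \eqref{Intro_1.7_hom_equat} at $z$ bijectively onto $L^2$-solutions of the $Q^*$-equation at $\bar z$; hence $\dim\ker(D^+_{\max}(Q)-z)=\dim\ker(D^+_{\max}(Q^*)-\bar z)=\mathrm{def}(D^+_{\min}(Q)-z)$. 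Since the defect number is constant on each connected component of the field of regularity, $\text{Def}\,D^+_{\min}(Q)$ is locally constant on $\widehat\rho(D^+_{\min}(Q))$, and it suffices to evaluate it at one convenient point of each component.

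Next I would extend $Q$ to a potential $Q_{\R}\in L^1_{\loc}(\R,\C^{2p\times 2p})$ by setting $Q_{\R}=Q$ on $\R_+$ and choosing a formally self-adjoint (for instance, zero) potential on $\R_-$, so that the whole-line minimal operator $T:=D_{\min}(Q_{\R})$ is $j$-symmetric for the natural extension of $j$ to $L^2(\R,\C^{2p})$. By Theorem \ref{th3.0.2}, $T$ is then $j$-self-adjoint, whence $jTj=T^*$; applying the antilinear $j$ to $\ker(T^*-\bar z)$ shows $\mathrm{def}(T-z)=\dim\ker(T-z)$, so that $\widehat\rho(T)=\rho(T)$. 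Fixing $z\in\widehat\rho(D^+_{\min}(Q))$ with $\re z\neq0$ and $\im z\neq0$ — such a point exists in every (open, nonempty) component of $\widehat\rho(D^+_{\min}(Q))$ — I would combine the half-line lower bound furnished by the hypothesis with the corresponding lower bound for the free operator on $\R_-$ (valid since $z$ is non-real) to conclude, by a routine localization at $x=0$, that $z\in\widehat\rho(T)=\rho(T)$.

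With $T-z$ boundedly invertible, the classical Green's-function construction forces the $2p$-dimensional solution space $S$ of \eqref{Intro_1.7_hom_equat} for $Q_\R$ to split as a direct sum $S=W_-(z)\oplus W_+(z)$, where $W_\pm(z)$ are the subspaces of solutions lying in $L^2$ near $\pm\infty$: a nontrivial element of $W_-(z)\cap W_+(z)$ would be an $L^2(\R)$-eigenfunction, while any failure of $W_-(z)+W_+(z)=S$ would obstruct solvability of $(T-z)u=f$. For the free left endpoint and $\re z\neq0$ one computes $\dim W_-(z)=p$ explicitly from the exponential solutions, and since $W_+(z)$ consists precisely of the $L^2(\R_+)$-solutions of the $Q$-equation, $\dim W_+(z)=\dim\ker(D^+_{\max}(Q)-z)$. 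Hence $\dim\ker(D^+_{\max}(Q)-z)=2p-p=p$ at the chosen point, and by the local constancy established in the first step, $\text{Def}\,D^+_{\min}(Q)=p$ on all of $\widehat\rho(D^+_{\min}(Q))$.

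Finally, to obtain a Weyl-type solution $\Psi_+(z,\cdot)$ holomorphic in $z$, I would assemble its $p$ columns as a holomorphic frame of the $p$-dimensional family $z\mapsto W_+(z)$. The Riesz projection onto $W_+(z)$ inside $S$ — equivalently, the $\gamma$-field of the boundary triple associated with $D^+_{\min}(Q)$ and the limit-point condition from Theorem \ref{th3.3} — depends holomorphically on $z$ on each component of $\widehat\rho(D^+_{\min}(Q))$, and a standard argument then produces a local holomorphic basis solving \eqref{Intro_1.7_hom_equat}. The main obstacle, and the step requiring the most care, is the exact count rather than the mere bound $\dim W_+(z)\le p$: turning the inequality into an equality is what forces simultaneously the transfer of the half-line field-of-regularity hypothesis into genuine invertibility of the $j$-self-adjoint whole-line operator $T$ (so that the solution space splits completely) and the explicit determination $\dim W_-(z)=p$ of the left endpoint contribution. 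The $j$-symmetry is indispensable here, as it is precisely what upgrades $\widehat\rho(T)$ to $\rho(T)$ and balances the two endpoint contributions to yield the value $p=\tfrac12\cdot 2p$.
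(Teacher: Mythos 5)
Your reduction of the first claim to the identity $\dim\ker(D^+_{\max}(Q)-z)=\dim\ker(D^+_{\max}(Q^*)-\bar z)=\mathrm{def}(D^+_{\min}(Q)-z)$ via the conjugation, and the observation that this quantity is locally constant on $\widehat\rho(D^+_{\min}(Q))$, are both fine. The proof breaks, however, at the step you yourself flag as the crux: the claim that, after extending $Q$ by zero to $\R_-$, a ``routine localization at $x=0$'' transfers the lower bound $\|(D^+_{\min}(Q)-z)f\|\ge\eps\|f\|$ to the whole-line operator $T=D_{\min}(Q_\R)$, so that $z\in\widehat\rho(T)=\rho(T)$. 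This does not follow. The domain of $T$ is a $2p$-dimensional extension of $\dom D^-_{\min}(0)\oplus\dom D^+_{\min}(Q)$ (it contains the functions that do not vanish at $0$), and the field-of-regularity bounds for the two half-line \emph{minimal} operators say nothing about this extension. Concretely, $\ker(T-z)=W_-(z)\cap W_+(z)$, and nothing in the hypotheses excludes a nontrivial intersection: an $L^2(\R_+)$-solution of $D(Q)y=zy$ (of which the theorem asserts there are $p$!) matched at $x=0$ with a decaying free solution on $\R_-$ would be a genuine eigenfunction of $T$, so $z\notin\widehat\rho(T)$. A cut-off argument only yields $\|u\|_{L^2(\R\setminus[-1,1])}\lesssim\|(T-z)u\|+\|u\|_{L^2(-1,1)}$, with no control of the middle term; ruling out such concentration is exactly equivalent to $W_-(z)\cap W_+(z)=\{0\}$, i.e.\ to what you are trying to prove. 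Since the entire Green's-function splitting $S=W_-(z)\oplus W_+(z)$ and hence the count $\dim W_+(z)=2p-p=p$ rest on $z\in\rho(T)$, the argument is circular at its key point. (A minor additional slip: for $J_n$ as in the paper the free left-endpoint count $\dim W_-(z)=p$ requires $\im z\neq0$, not $\re z\neq0$.)

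The missing ingredient is supplied in the paper by a purely abstract device that your gluing cannot replace: Proposition \ref{prop_gap_preserving} produces, by a Cayley-transform/contraction-extension argument, a $j$-selfadjoint extension $\widetilde A$ of $D^+_{\min}(Q)$ \emph{inside} $L^2(\R_+,\C^{2p})$ (realized by a boundary condition at $0$) with $z_0\in\rho(\widetilde A)$ and with the lower bound preserved. Once such an extension exists, Corollary \ref{Neim_for-la_gener-n} gives $\dim(\dom(jA^*j)/\dom A)=2\dim\gN_{\bar z}(A)$, and combining this with $jD^+_{\min}(Q)^*j=D^+_{\max}(Q)$ and with $\dim(\dom D^+_{\max}(Q)/\dom D^+_{\min}(Q))=2p$ (Corollary \ref{cor3.3.0}, which rests on Theorem \ref{th3.3} and Lemma \ref{lem3.2}) yields $\mathrm{Def}=p$ directly; the holomorphic Weyl solution is then the $\gamma$-field of a boundary triple for the dual pair attached to $\widetilde A$. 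If you want to keep your geometric picture, you would need to either prove the gap-preserving extension theorem yourself or find an independent argument that some point of each component of $\widehat\rho(D^+_{\min}(Q))$ lies in $\rho(T)$; as written, that step is a genuine gap.
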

This theorem generalizes the corresponding result from  \cite[Theorem 5.4]{CGHL04} (see
also \cite{CG04} and \cite{CG06}) where it is proved for $p=1$ by another method.

Moreover, assuming that the imaginary part $\im Q$, of the potential matrix $Q$ is
bounded, i.e.   $\a_Q \le \im Q(x) \le \b_Q,$  $x\in\R_+$,   we show (see Proposition
\ref{pr3.11}) that
%
%
\begin{gather}
\dim \gN_\l(\Dmi^+(Q^*)) =   \kappa_+, \qquad \dim \gN_{\overline\l}(\Dmi^+(Q)) =\kappa_-,\qquad \im\l > \b_Q, \nonumber   \\
\dim \gN_\l(\Dmi^+(Q^*)) = \kappa_-, \qquad \dim \gN_{\overline\l}(\Dmi^+(Q)) =
\kappa_+, \qquad \im\l < \a_Q. \label{Intro_1.7_def_indices}
 \end{gather}
 Here $\gN_\l(\Dmi^+(Q^*))= \ker((D^+_{\max}(Q) - \l)$. This result is a counterpart
 of the minimality conditions \eqref{Intro1.3_Def_indic_Semiax} in the non-selfadjoint case.




In the case $n=2p$ and $\kappa_{\pm} =p$ this result implies the existence of the $2p\times
p$ --Weyl-Titchmarsh-type matrix solution $\Psi(z, \cdot)\in L^2(\R_+,\mathbb{C}^{2p\times
p})$ of the equation $D^+_{\max}(Q)\Psi(z, \cdot)= z\Psi(z, \cdot)$ which complements
Theorem \ref{Th1.2_Intro}.

In turn, we apply this result to prove the uniqueness of the Weyl function for the Dirac
operator on the semiaxis (see Proposition \ref{pr3.15}). This result complements a recent result
from \cite{FKRS12} where it is proved for a skew-symmetric potential matrix $Q$ by
extending the classical Weyl limit point -- limit circle   procedure.

%

The paper is organized as follows. Section \ref{sect1} introduces the abstract setting
that we will use: dual pairs of operators $A$ and $B$ and their proper extensions. Dual
pairs of operators and their extensions have been studied by many authors, see
e.g.~\cite{DM91b,GG91} for the symmetric case and
\cite{BGW09,BHMNW09,BMNW08,Lyantze,MM97,MM99,MM02} for the non-symmetric case. The main
idea of this paper consists of reducing the problem to the study of the properties of
the symmetric operator $S$ given by
\begin{equation}
S=
\begin{pmatrix}
0&A\\
B&0
\end{pmatrix}.
           \end{equation}
This allows us to relate the deficiency indices of $S$ to those of the dual pair and to
study $j$-symmetric operators $A$ and their extensions via $S$. In particular, we prove
here Proposition \ref{prop2.1}  containing an analog of the von Neumann formulas.

In turn, using this result we obtain the results of  Race \cite{Rac85}  and Zhikhar
\cite{Zhi59} by treating extensions of a  densely defined closed $j$-symmetric operator
$A$  as proper extensions of the dual pair $\{A, jAj\}$.

In Section \ref{Dirac} we apply  the abstract results to non-selfadjoint Dirac type operators
%
%
%
in  $L^2(\R,\C^{2n})$   to prove Theorem  \ref{th1.1_dom-max=dom-min},
as well as  $j$-selfadjointness of the  $j$-symmetric minimal Dirac-type operator $\Dmi (Q)$.
Moreover, we prove a  counterpart of Theorem  \ref{th1.1_dom-max=dom-min}
for $L^2(\R_+,\C^{2n})$ (see Theorem \ref{th3.3}).

 In Section 4.1 we  introduce  a boundary triple  for a dual pair of minimal Dirac operators $\{D_{\min}^+(Q),
D_{\min}^+(Q^*)\}$ and investigate the corresponding Weyl function. In Section 4.2 we
prove Theorem \ref{Th1.2_Intro} on existence  of the Weyl $2p\times p$-matrix solution
for $j$-symmetric operators.

In Section 4.3  we  investigate the Dirac-type operator on the half-line assuming a
potential matrix $Q$ with a bounded imaginary part. In particular, we prove the
equalities \eqref{Intro_1.7_def_indices} for such operators,  meaning the minimality of
deficiency indices in respective half-planes. Moreover,  we extract from this property
the uniqueness of the Weyl function for Dirac-type operators with $J_n$ satisfying
$\kappa_{\pm}=p$ (see Proposition \ref{pr3.15}). This result complements the results
from \cite{FKRS12} (see also \cite{SSR13}) and Proposition \ref{pr3.11}.
We also obtain certain additional properties of the Weyl function  for such Dirac-type
operators. In this connection we also mention the paper \cite{HMM13} where certain
properties of Dirac-type operators were investigated in the framework of boundary
triples and the  corresponding  Weyl functions.

\section{Dual pairs and $j$-symmetric operators} \label{sect1}

In this section we introduce several  abstract results for dual pairs of operators. These will be used to study  specific problems for Dirac operators in the rest of the paper.

\noindent{\bf 2.1. Dual pairs and symmetric operators.}

      \begin{definition}\label{def2.1}
(i) A pair of linear operators $A$ and $B$
in a Hilbert space $\gH$ forms a dual pair (or adjoint pair) $\{A,B\}$ if
               \begin{equation}\label{DP1}
(A f,g)=(f,Bg), \qquad   f\in\dom A,  \qquad g  \in\dom B.
                  \end{equation}
If $A$ and $B$ are closable densely defined operators, then \eqref{DP1} is equivalent to each of the following relations
                  \begin{equation}\label{DP2}
A\subseteq B^* \quad    \text{and} \quad  B\subseteq A^*.
                     \end{equation}

(ii) A densely defined operator ${\widetilde A}$ is called a proper extension of a dual pair $\{A,B\}$ if $A\subset{\widetilde A}\subset B^*$.
The set of all proper extensions of a dual pair $\{A,B\}$ is denoted by $\Ext\{A,B\}$.
       \end{definition}

Our interest is in studying extensions in $\Ext\{A,B\}$. As a first step, we will investigate the relation between $\dom A$ and $\dom B^*$ and $\dom B$ and $\dom A^*$, respectively.

To any closable densely defined operator $A$ we associate
a Hilbert space $\gH_{+A}$ by  equipping  $\dom A^*$ with an inner product
              \begin{equation}\label{1}
(f,g)_{+A}=(f,g)+(A^* f, A^* g)\qquad   f,g \in \dom A^*.
                \end{equation}
By $\oplus_A$ we denote the orthogonal sum in  $\gH_{+A}$. For simplicity of notation, when no confusion is possible, we will omit specifying the operator and let $\gH_+:=\gH_{+A}$ with inner product $(f,g)_{+}:=(f,g)_{+A}$.

 \begin{remark} \label{rem:closed}   Let $\{A,B\}$ be a dual pair. Then,
clearly $\dom B$ is a (closed) subspace of  $\gH_{+A}$ if and only if $B$ is closed.
\end{remark}
    \begin{proposition}\label{prop2.1}
Let $\{A,B\}$ be a dual pair of densely defined closed operators in $\gH$. Then the following orthogonal decompositions hold
            \begin{equation}\label{2.2A}
\dom A^*=\dom B\oplus_A\ker(I+B^* A^*),
             \end{equation}
      \begin{equation}\label{2.3A}
\dom B^*=\dom A\oplus_B\ker(I+A^* B^*).
       \end{equation}
In particular, $A^*=B$ (or, equivalently, $A=B^*$) if and only if
        \begin{equation}\label{2.3B}
\ker(I+B^* A^*)=\{0\} \  \bigl(\hbox{or, equivalently, }\ker(I+A^* B^*)=\{0\}\bigr).
      \end{equation}
                \end{proposition}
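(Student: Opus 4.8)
The plan is to prove the orthogonal decomposition \eqref{2.2A} in the Hilbert space $\gH_{+A}=(\dom A^*, (\cdot,\cdot)_{+A})$; the formula \eqref{2.3A} then follows by symmetry (interchanging the roles of $A$ and $B$, noting that $\{B,A\}$ is also a dual pair since $B\subseteq A^*$ and $A\subseteq B^*$), and the final equivalence \eqref{2.3B} is an immediate consequence. First I would observe, via Remark \ref{rem:closed}, that since $B$ is closed, $\dom B$ is a \emph{closed} subspace of $\gH_{+A}$, so the orthogonal complement $(\dom B)^{\perp_A}$ is well-defined and $\gH_{+A}=\dom B \oplus_A (\dom B)^{\perp_A}$. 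The entire content of \eqref{2.2A} is therefore the identification
\begin{equation*}
(\dom B)^{\perp_A}=\ker(I+B^*A^*).
\end{equation*}

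The key computational step is to characterize orthogonality in the $+A$ inner product. A vector $h\in\dom A^*$ lies in $(\dom B)^{\perp_A}$ iff $(h,g)_{+A}=0$ for all $g\in\dom B$, i.e.
\begin{equation*}
(h,g)+(A^*h,A^*g)=0,\qquad g\in\dom B.
\end{equation*}
Now I would use the crucial compatibility relation coming from the dual pair: for $g\in\dom B$ one has $A^*g=Bg$ (this is exactly where $B\subseteq A^*$ is used), so the second term becomes $(A^*h,Bg)$. The orthogonality condition reads $(h,g)+(A^*h,Bg)=0$ for all $g\in\dom B$. The first step toward decoding this is to rewrite $(A^*h,Bg)$ as $(B^*A^*h,g)$ — but this requires $A^*h\in\dom B^*$, which I must justify rather than assume; the condition $(A^*h,Bg)=-(h,g)$ for all $g\in\dom B$ is precisely the statement that $A^*h\in\dom B^*$ with $B^*A^*h=-h$, by the definition of the adjoint $B^*$ (the map $g\mapsto(A^*h,Bg)$ being bounded in $g$, namely equal to $-(h,g)$). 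Thus orthogonality is equivalent to $A^*h\in\dom B^*$ and $B^*A^*h=-h$, i.e. $(I+B^*A^*)h=0$, which is exactly $h\in\ker(I+B^*A^*)$.

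The main obstacle, and the point requiring the most care, is the direction of the adjoint argument: interpreting the vanishing of the bounded functional $g\mapsto (A^*h,Bg)+(h,g)$ on the dense domain $\dom B$ correctly as membership in $\dom B^*$. Concretely, the identity $(Bg, A^*h)=(g,-h)$ valid for all $g\in\dom B$ says, by the very definition of $B^*$, that $A^*h\in\dom B^*$ and $B^*(A^*h)=-h$; conversely, any $h\in\ker(I+B^*A^*)$ automatically satisfies $A^*h\in\dom B^*$ and the orthogonality relation. One should double-check that both $A^*h$ and $h$ genuinely lie in the required domains so that all inner products are legitimate: $h\in\dom A^*$ by hypothesis, and the kernel condition forces $A^*h\in\dom B^*$. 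I would also note that the decomposition is orthogonal in $\gH_{+A}$ and that $\ker(I+B^*A^*)\subseteq\dom A^*$ is automatic since membership in the kernel presupposes $A^*h$ is defined.

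Having established \eqref{2.2A}, the symmetric statement \eqref{2.3A} follows verbatim with $A$ and $B$ swapped, using the $+B$ inner product on $\dom B^*$. Finally, for \eqref{2.3B} I would argue that $A^*=B$ holds iff $\dom A^*=\dom B$, which by \eqref{2.2A} is equivalent to $\ker(I+B^*A^*)=\{0\}$; when the domains coincide the operators agree since $A^*\supseteq B$ always holds from $A\subseteq B^*$ (equivalently $B\subseteq A^*$), so equality of domains upgrades to equality of operators. The parenthetical equivalence with $\ker(I+A^*B^*)=\{0\}$ follows identically from \eqref{2.3A}, since $A^*=B$ and $A=B^*$ are the same statement for this dual pair.
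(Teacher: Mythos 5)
Your proof is correct and follows essentially the same route as the paper: identify $\dom B$ as a closed subspace of $\gH_{+A}$, compute the orthogonal complement using $A^*g=Bg$ on $\dom B$, and read off membership in $\ker(I+B^*A^*)$ via the definition of the adjoint $B^*$. Your extra care in justifying that $A^*h\in\dom B^*$ from the boundedness of the functional $g\mapsto(A^*h,Bg)$ is exactly the (implicit) content of the paper's one-line conclusion from its equation \eqref{2.4A}.
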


\begin{remark} As in this proposition,
throughout the paper, many results will consist of two statements, the second one being the `adjoint' of the first. In these cases, we will usually omit the proof of the second statement, as it will follow analogously to the first.
\end{remark}

         \begin{proof}
By Remark \ref{rem:closed}, $\dom B$ is a closed subspace of $\gH_+$. Let $g$ be orthogonal to $\dom B$ in $\gH_+$. Then for any $f\in\dom B$         
          \begin{equation}\label{2.4A}
0=(f,g)_+=(A^* f,A^* g)+(f,g)=(B f,A^* g)+(f,g).
             \end{equation}

Hence, $A^*g\in \dom B^*$ and $B^*A^*g=-g$, or $g\in\ker(I+B^* A^*)$. Conversely, if $g\in\ker(I+B^* A^*)$, then by \eqref{2.4A} we have $g\perp\dom B$. Thus \eqref{2.2A} is valid.
             \end{proof}
   \begin{corollary}\label{cor2.1}
Let $\{A,B\}$ be a dual pair of densely defined operators in $\gH$. Define
\be
n(A^*,B) := \dim(\dom A^*/ \dom B)\quad \hbox{ and }\quad n(B^*,A):= \dim(\dom B^*/ \dom A).
\ee
 Then
            \begin{equation}\label{2.5A}
n(A^*,B) = n(B^*,A).                        \end{equation}
     \end{corollary}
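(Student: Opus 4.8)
The plan is to derive Corollary \ref{cor2.1} directly from the two orthogonal decompositions \eqref{2.2A} and \eqref{2.3A} established in Proposition \ref{prop2.1}. First I would note that, since the quotients $\dom A^*/\dom B$ and $\dom B^*/\dom A$ are each measured by the codimension of a closed subspace inside the Hilbert space $\gH_{+A}$ (respectively $\gH_{+B}$), the decompositions identify these quotient dimensions with the dimensions of kernels:
            \begin{equation*}
n(A^*,B) = \dim\ker(I+B^* A^*), \qquad n(B^*,A) = \dim\ker(I+A^* B^*).
             \end{equation*}
Thus the corollary reduces to showing that these two kernels have the same dimension, i.e. $\dim\ker(I+B^*A^*) = \dim\ker(I+A^*B^*)$.

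The key step is to exhibit an explicit linear isomorphism between $\ker(I+B^*A^*)$ and $\ker(I+A^*B^*)$. The natural candidate is the map $g\mapsto A^* g$ (with inverse, morally, $h\mapsto B^* h$). Indeed, the computation in \eqref{2.4A} already shows that if $g\in\ker(I+B^*A^*)$, then $A^*g\in\dom B^*$ and $B^*A^*g=-g$; I would next verify the companion relation $A^*B^*(A^*g) = A^*(-g) = -A^*g$, so that $A^*g\in\ker(I+A^*B^*)$. Hence $A^*$ maps $\ker(I+B^*A^*)$ into $\ker(I+A^*B^*)$, and symmetrically $B^*$ maps the second kernel into the first. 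The relations $B^*A^*g=-g$ and $A^*B^*h=-h$ show these two maps are mutually inverse up to sign, so each is a bijection; injectivity of $g\mapsto A^*g$ on the first kernel follows because $g = -B^*A^*g$ recovers $g$ from $A^*g$.

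I expect the main obstacle to lie in the two infinite-dimensional subtleties hidden in the phrase ``same dimension.'' First, I must be careful that $A^*g$ genuinely lands in $\dom B^*$ (and not merely formally), which is exactly what \eqref{2.4A} guarantees, and symmetrically that $B^*h\in\dom A^*$; these domain-membership checks are the analytic content, whereas the algebra of the involution is routine. Second, since the corollary is stated for general \emph{densely defined} (not necessarily closed) operators, whereas Proposition \ref{prop2.1} assumes the pair is closed, I would first pass to closures: replacing $A,B$ by $\ov A,\ov B$ leaves $A^*,B^*$ unchanged, and since $\{\ov A,\ov B\}$ is again a dual pair of closed operators with $\dom\ov A=\dom A$-closure and the same adjoints, the quotient dimensions $n(A^*,B)$ and $n(B^*,A)$ are unaffected. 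This reduction lets me apply \eqref{2.2A}--\eqref{2.3A} legitimately and then conclude \eqref{2.5A} from the bijection $A^*:\ker(I+B^*A^*)\to\ker(I+A^*B^*)$.
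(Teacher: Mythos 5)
Your argument is essentially the paper's: it too reduces \eqref{2.5A} to $\dim\ker(I+B^*A^*)=\dim\ker(I+A^*B^*)$ via \eqref{2.2A}--\eqref{2.3A}, and proves this equality by observing that $A^*$ and $B^*$ intertwine $I+B^*A^*$ with $I+A^*B^*$ and are injective on the respective kernels --- exactly your mutually-inverse-up-to-sign bijection. One small caution: your closure reduction is not as automatic as stated, since replacing $A,B$ by $\ov A,\ov B$ enlarges the domains and can a priori change the quotients $\dom B^*/\dom A$ and $\dom A^*/\dom B$; the corollary should simply be read with the closedness hypothesis of Proposition \ref{prop2.1}, which the paper tacitly assumes.
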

         \begin{proof}
The simple equalities
$$ A^*(I+B^*A^*)=(I+A^*B^*)A^* \quad \hbox{ and } \quad B^*(I+A^*B^*)=(I+B^*A^*)B^*,$$
together with the fact that $A^*f\neq 0$ for $f\in\ker (I+B^*A^*)$ and $B^*g\neq 0$ for $g\in\ker (I+A^*B^*)$
imply that
           \begin{equation}\label{2.6A}
\dim\ker(I+B^*A^*)=\dim\ker(I+A^*B^*).
              \end{equation}
By combining this with \eqref{2.2A} and \eqref{2.3A}
we get the required result.
              \end{proof}

Formulas \eqref{2.2A} and \eqref{2.3A} may be considered as generalizations of the J. von Neumann formula known in the extension theory of  symmetric operators. In fact, the latter is easily implied by \eqref{2.2A} if $B=A$.                       

For the next statement, for $\lambda$ in the field of regularity $\widehat \rho(A)$ of $A$, we use the common notation $\gN_{{\lambda}}(A) =
\ker(A^*-\lambda)$ for the defect subspace
 of the operator $A$,
 omitting the parameter $A$ when no confusion
can arise.
         \begin{corollary}\label{cor2.2}
 Let $A$ be a closed densely defined symmetric operator in $\gH$. Then
          \begin{equation}\label{7}
\dom A^*=\dom A\oplus_A\gN_i\oplus_A\gN_{-i}.
              \end{equation}
  \end{corollary}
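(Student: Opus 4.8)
The plan is to obtain \eqref{7} as a direct specialization of Proposition \ref{prop2.1} to the dual pair $\{A,A\}$. Since $A$ is closed, densely defined and symmetric, we have $A\subseteq A^*$, so $(Af,g)=(f,Ag)$ for all $f,g\in\dom A$; thus $\{A,A\}$ is a dual pair of closed densely defined operators and Proposition \ref{prop2.1} applies with $B=A$. Formula \eqref{2.2A} then reads $\dom A^*=\dom A\oplus_A\ker(I+(A^*)^2)$, since here $B^*A^*=(A^*)^2$. It therefore remains to identify the kernel $\ker(I+(A^*)^2)$ with the orthogonal sum $\gN_i\oplus_A\gN_{-i}$.

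Next I would factor $I+(A^*)^2=(A^*-iI)(A^*+iI)$, which shows at once that $\gN_{\pm i}=\ker(A^*\mp iI)\subseteq\ker(I+(A^*)^2)$. For the reverse inclusion, given $g\in\ker(I+(A^*)^2)$ I would set $g_+=\tfrac{1}{2i}(A^*+iI)g$ and $g_-=-\tfrac{1}{2i}(A^*-iI)g$. A short computation shows $(A^*-iI)g_+=\tfrac{1}{2i}(I+(A^*)^2)g=0$ and $(A^*+iI)g_-=-\tfrac{1}{2i}(I+(A^*)^2)g=0$, so that $g_\pm\in\gN_{\pm i}$, while $g_++g_-=\tfrac{1}{2i}\bigl((A^*+iI)g-(A^*-iI)g\bigr)=g$. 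Hence $\ker(I+(A^*)^2)=\gN_i+\gN_{-i}$.

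Finally I would check that this last sum is orthogonal with respect to the inner product $(\cdot,\cdot)_{+A}$, so that it is genuinely an $\oplus_A$-decomposition. For $f\in\gN_i$ and $g\in\gN_{-i}$ one has $A^*f=if$ and $A^*g=-ig$, whence $(f,g)_{+A}=(f,g)+(A^*f,A^*g)=(f,g)+(if,-ig)=(f,g)-(f,g)=0$. Combining this orthogonality with the $\oplus_A$-decomposition already furnished by \eqref{2.2A} yields \eqref{7}. No step here presents a genuine obstacle: the only points requiring care are verifying that $B=A$ is an admissible choice in Proposition \ref{prop2.1} (guaranteed by symmetry together with closedness and density) and the sign bookkeeping in the orthogonality computation, which is precisely what makes the two defect subspaces $\gN_i$ and $\gN_{-i}$ mutually $\oplus_A$-orthogonal.
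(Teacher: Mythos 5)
Your argument is correct and follows essentially the same route as the paper: specialize Proposition \ref{prop2.1} to the dual pair $\{A,A\}$ to get $\dom A^*=\dom A\oplus_A\ker(I+(A^*)^2)$, then split any $u$ in that kernel via the identity $u=-\tfrac{(A^*-i)u-(A^*+i)u}{2i}$ into components in $\gN_i$ and $\gN_{-i}$. Your explicit verification that $\gN_i\perp_{+A}\gN_{-i}$ is a detail the paper leaves implicit, but the substance is identical.
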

           \begin{proof}
Note that, since $A$ is symmetric, the pair $\{A,A\}$ is a dual pair of operators in $\gH$. Now formula \eqref{2.2A} becomes
     \begin{equation}\label{8}
\dom A^*=\dom A\oplus_A\ker(I+A^{* 2})
=\dom A\oplus_A\ker(A^*+i I)\oplus_A\ker(A^*-i I).
      \end{equation}
      Here, the last equality follows from $u=-\frac{(A^*-i)u-(A^*+i)u}{2i}$ for any $u\in D(A^*)$.
                \end{proof}
We now introduce a symmetric operator $S$ associated with the dual pair $\{A,B\}$ which allows us to extend many results from the symmetric case to our more general setting.

          \begin{proposition}\label{prop2.2}
Let $A$ and $B$ form a dual pair of closed densely defined operators in $\gH$.
Then

(i) the operator
      \begin{equation}\label{2.1}
S=
\begin{pmatrix}
0&A\\
B&0
\end{pmatrix}\quad\hbox{ with }\quad \dom (S)=\dom (B)\times \dom (A)
           \end{equation}
is a symmetric operator in $\gH^2=\gH\oplus\gH$ with equal deficiency indices,
      \begin{equation}\label{2.1B}
n_+(S)=n_-(S) = n(A^*,B) = n(B^*,A).
        \end{equation}

(ii) $S$ is selfadjoint if and only if
    \begin{equation}
A=B^*.
       \end{equation}
             \end{proposition}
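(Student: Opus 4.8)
The plan is to prove Proposition~\ref{prop2.2} by direct computation of the adjoint of the block operator $S$, reducing everything to the defining relations of the dual pair $\{A,B\}$. First I would verify that $S$ is symmetric. For $u=(u_1,u_2)^\top\in\dom(S)=\dom(B)\times\dom(A)$ and $v=(v_1,v_2)^\top\in\dom(S)$, I compute
\begin{equation*}
(Su,v)_{\gH^2}=(Au_2,v_1)+(Bu_1,v_2).
\end{equation*}
Using the dual pair relation \eqref{DP1}, namely $(Af,g)=(f,Bg)$ for $f\in\dom A$, $g\in\dom B$, applied with $f=u_2,g=v_1$ and with $f=v_2,g=u_1$, this rearranges to $(u_1,Bv_2)+(u_2,Av_1)=(u,Sv)_{\gH^2}$, so $S$ is symmetric. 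No real obstacle here, it is just bookkeeping with the off-diagonal structure.

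Next I would identify $S^*$ explicitly. The key observation is that $\dom S=\dom B\times\dom A$, so when I compute $(Su,v)=(u,S^*v)$ for all $u$ in this product domain, the two components decouple: requiring the identity for all $u_1\in\dom B$ (with $u_2=0$) forces the first component of $S^*v$ to be $B^*v_2$ and $v_2\in\dom B^*$, while requiring it for all $u_2\in\dom A$ (with $u_1=0$) forces the second component to be $A^*v_1$ and $v_1\in\dom A^*$. Hence
\begin{equation*}
S^*=\begin{pmatrix}0&B^*\\ A^*&0\end{pmatrix},\qquad \dom(S^*)=\dom(A^*)\times\dom(B^*).
\end{equation*}
From this formula part~(ii) is immediate: $S=S^*$ holds precisely when $\dom A=\dom B^*$ and $\dom B=\dom A^*$ together with $A=B^*$, $B=A^*$; since $A\subseteq B^*$ and $B\subseteq A^*$ always hold for a dual pair by \eqref{DP2}, selfadjointness of $S$ is equivalent to the single equality $A=B^*$ (which by Proposition~\ref{prop2.1} is equivalent to $B=A^*$).

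For part~(i) it remains to establish the deficiency index formula \eqref{2.1B}. The equality $n(A^*,B)=n(B^*,A)$ is exactly Corollary~\ref{cor2.1}, so I may use it freely. To connect the deficiency indices of $S$ to these quantities, I would first argue $n_+(S)=n_-(S)$: the involution $\gH^2\ni(x_1,x_2)\mapsto(x_1,-x_2)$ (or an analogous conjugation swapping the two defect spaces) maps $\ker(S^*-iI)$ onto $\ker(S^*+iI)$, forcing the indices to coincide. Then I would compute one of them directly using the decomposition \eqref{2.2A}–\eqref{2.3A} from Proposition~\ref{prop2.1}. The cleanest route is to compare dimensions: since $\dim(\dom S^*/\dom S)=n_+(S)+n_-(S)$ for a symmetric operator with closed $S$, and since $\dom S^*/\dom S=(\dom A^*/\dom B)\times(\dom B^*/\dom A)$ splits as a product by the block structure of the domains, I get $n_+(S)+n_-(S)=n(A^*,B)+n(B^*,A)$. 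Combining with $n_+(S)=n_-(S)$ and Corollary~\ref{cor2.1} yields $n_\pm(S)=n(A^*,B)=n(B^*,A)$, which is \eqref{2.1B}.

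The main obstacle I anticipate is the step identifying $\dim(\dom S^*/\dom S)$ with $n_+(S)+n_-(S)$ and justifying the product decomposition of the quotient cleanly; the von~Neumann relation $\dom S^*=\dom S\oplus_S\gN_i(S)\oplus_S\gN_{-i}(S)$ (Corollary~\ref{cor2.2} applied to $S$) gives the first identity, and the product structure of $\dom S^*$ and $\dom S$ gives the second, but one must be careful that these quotient dimensions are taken in the right spaces and that the symmetry argument for $n_+(S)=n_-(S)$ uses a genuine conjugation commuting appropriately with $S$. Everything else is routine manipulation of the block form.
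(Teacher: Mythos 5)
Your proof is correct, and for the deficiency-index formula in part (i) it follows a genuinely different route from the paper's. Both arguments share the key structural step of identifying $S^*=\begin{pmatrix}0&B^*\\ A^*&0\end{pmatrix}$ with $\dom S^*=\dom A^*\times\dom B^*$, from which symmetry and part (ii) follow exactly as you describe. The paper then computes the defect subspaces of $S$ explicitly: solving $S^*f=\mp if$ componentwise shows that $\ker(S^*\pm i)$ is parametrized by $f_2\in\ker(I+A^*B^*)$ via $f_1=\pm iB^*f_2$, and since $B^*$ is injective on $\ker(I+A^*B^*)$ this gives $n_{\pm}(S)=\dim\ker(I+A^*B^*)$ at once; the identification with $n(A^*,B)=n(B^*,A)$ then comes from \eqref{2.2A}, \eqref{2.3A} and \eqref{2.6A}. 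You instead obtain $n_+(S)=n_-(S)$ from the involution $V\colon(x_1,x_2)\mapsto(x_1,-x_2)$ --- which works because $VS^*V=-S^*$, so $V$ swaps the two defect spaces --- and then count $\dim(\dom S^*/\dom S)$ in two ways: as $n_+(S)+n_-(S)$ by von Neumann's formula (Corollary \ref{cor2.2} applied to the closed, densely defined symmetric operator $S$) and as $n(A^*,B)+n(B^*,A)$ by the product structure of the domains. Your route avoids solving the eigenvalue equation and is arguably more conceptual; its only extra cost is a line of cardinal arithmetic when the indices are infinite (from $\kappa+\kappa=\mu+\mu$ with $\kappa=n_{\pm}(S)$ and $\mu=n(A^*,B)$ one still concludes $\kappa=\mu$, but this deserves a word). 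The paper's computation is more explicit, yields the equality of dimensions without any case distinction on finiteness, and exhibits the defect vectors of $S$ concretely.
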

\begin{proof}
(i)
Note that the property of $S$  being symmetric,
         \begin{equation}\label{2.1A}
S=
\begin{pmatrix}
0&A\\
B&0
\end{pmatrix}
\subset
\begin{pmatrix}
0&B^*\\
A^*&0
\end{pmatrix}
=S^*,
      \end{equation}
is equivalent to saying that the operators $A$ and  $B$ form a dual pair of operators.

It is easily seen that
  \begin{equation}
\ker(S^*\pm i)=\{f:\  f=\{f_1,f_2\}, \quad   f_1=\pm iB^* f_2,\quad  f_2\in\ker(I+A^* B^*)\}.
      \end{equation}
Note that $B^* f_2\not =0$ for any $f_2\in\ker(I+A^* B^*)$. Therefore $B^*$ maps $\cH_0=\ker(I+A^* B^*)$  isomorphically onto $B^* \cH_0$.

Hence
   \begin{equation}
n_{\pm}(S)=\dim\ker(S^*\mp i)=\dim\ker(I+A^* B^*)
     \end{equation}
and the statement follows from the proof of Corollary \ref{cor2.1}.

(ii) This statement is implied immediately by (i).
           \end{proof}
      \begin{remark}
(i) The decompositions \eqref{2.2A} and   \eqref{2.3A} can also be derived
from Proposition \ref{prop2.2}.

(ii) In \cite{Mol98} operators of the form \eqref{2.1} are studied. An upper bound for $n_+$ and $n_-$ is given in \cite[Lemma 2.1]{Mol98}, but equality is not shown there.

(iii) Note that for any  ${\widetilde A}\in \Ext\{A,B\}$, the operator $T=
\begin{pmatrix}
0&{\widetilde A}\\
{\widetilde A}^*&0
\end{pmatrix}$ is a selfadjoint extension of $S$. Hence the equality $n_+(S)=n_-(S)$ is immediate.
          \end{remark}

\noindent{\bf 2.2. Correct dual pairs and quasi-selfadjoint extensions.}

The following new definition will play a central role in the paper.
             \begin{definition}\label{def2.2}
Let  $\{A,B\}$ be a dual pair of closed densely defined operators in $\gH.$

(i) We will call an extension ${\widetilde A}\in \Ext\{A,B\}$  quasi-selfadjoint (or
quasi-hermitian) if
      \begin{equation}\label{2.80}
\dim(\dom{\widetilde A}/\dom A)=\dim(\dom{\widetilde A}^*/\dom B).
      \end{equation}

(ii) A  a dual pair  $\{A,B\}$ will be called a correct dual pair if it admits
a quasi-selfadjoint extension.
                 \end{definition}
The following proposition gives necessary conditions
for a dual pair $\{A,B\}$ to be a correct  dual pair.
In particular, it shows that not any dual pair is correct.
    \begin{proposition}\label{prop2.3}
Let $\{A,B\}$ be a dual pair of operators in $\gH$ and ${\widetilde A}\in \Ext\{A,B\}$.

(i) The following identities hold
      \begin{eqnarray}\label{2.81}
      n(A^*, B)&=&
\dim(\dom B^*/\dom{\widetilde A}) + \dim (\dom A^*/\dom{\widetilde A}^*) \\
&=&\dim(\dom {\widetilde A}/\dom{A}) + \dim (\dom {\widetilde A}^*/\dom{B}). \nonumber
              \end{eqnarray}

(ii) If ${\widetilde A}$ is a quasi-selfadjoint extension of $\{A,B\}$,
then $n(A^*,B)$ is even and
          \begin{eqnarray}\label{2.82}
          {n(A^*,B)}/{2}&=&
\dim(\dom B^*/\dom{\widetilde A})\ =\ \dim(\dom A^*/\dom{\widetilde A}^*)  \\
&=&\dim(\dom {\widetilde A}/\dom{A})\ =\ \dim(\dom {\widetilde A}^*/\dom{B}). \nonumber
        \end{eqnarray}
          \end{proposition}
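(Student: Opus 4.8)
The plan is to prove both parts simultaneously by establishing a chain of dimension equalities linking the four quotient spaces, using the intermediate extension $\widetilde A$ to decompose the ``gap'' between $\dom A$ (resp.\ $\dom B$) and $\dom B^*$ (resp.\ $\dom A^*$). The key structural fact I would exploit is the containment chain $A\subset\widetilde A\subset B^*$ together with its adjoint version $B\subset\widetilde A^*\subset A^*$, which holds because $\widetilde A\in\Ext\{A,B\}$ means $A\subset\widetilde A\subset B^*$, and taking adjoints reverses inclusions to give $B=A^{**}\subset\widetilde A^*\subset A^*$.

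First I would prove part (i). Consider the chain $\dom A\subset\dom\widetilde A\subset\dom B^*$. Since dimensions of quotients are additive along such a chain of linear spaces, I would write
\begin{equation*}
\dim(\dom B^*/\dom A)=\dim(\dom\widetilde A/\dom A)+\dim(\dom B^*/\dom\widetilde A).
\end{equation*}
By Corollary \ref{cor2.1} the left-hand side equals $n(B^*,A)=n(A^*,B)$. Applying the same additivity to the adjoint chain $\dom B\subset\dom\widetilde A^*\subset\dom A^*$ gives
\begin{equation*}
n(A^*,B)=\dim(\dom A^*/\dom B)=\dim(\dom\widetilde A^*/\dom B)+\dim(\dom A^*/\dom\widetilde A^*).
\end{equation*}
These two displays together yield the two expressions for $n(A^*,B)$ in \eqref{2.81}; I would only need to match the four summands correctly, reading off that $n(A^*,B)$ equals both $\dim(\dom B^*/\dom\widetilde A)+\dim(\dom A^*/\dom\widetilde A^*)$ and $\dim(\dom\widetilde A/\dom A)+\dim(\dom\widetilde A^*/\dom B)$.

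Part (ii) is then essentially formal. Under the quasi-selfadjointness hypothesis \eqref{2.80}, namely $\dim(\dom\widetilde A/\dom A)=\dim(\dom\widetilde A^*/\dom B)$, the second line of \eqref{2.81} shows $n(A^*,B)=2\dim(\dom\widetilde A/\dom A)$, so $n(A^*,B)$ is even and $n(A^*,B)/2$ equals the two coinciding summands $\dim(\dom\widetilde A/\dom A)=\dim(\dom\widetilde A^*/\dom B)$. To obtain the remaining two equalities in \eqref{2.82}, I would combine the first line of \eqref{2.81} with the two additivity relations from part (i): subtracting $\dim(\dom\widetilde A/\dom A)$ from the identity $n(A^*,B)=\dim(\dom\widetilde A/\dom A)+\dim(\dom B^*/\dom\widetilde A)$ shows $\dim(\dom B^*/\dom\widetilde A)=n(A^*,B)/2$, and the symmetric computation gives $\dim(\dom A^*/\dom\widetilde A^*)=n(A^*,B)/2$.

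I expect the main obstacle to be bookkeeping rather than any conceptual difficulty: one must be careful that all four quotient dimensions are genuinely finite (or handle the $+\infty$ case consistently) so that the additivity and cancellation of dimensions are legitimate, and one must verify the orientation of each inclusion in the two chains so that the four summands are identified with the correct quotient spaces. The only genuinely non-trivial input is Corollary \ref{cor2.1}, which supplies the equality $n(A^*,B)=n(B^*,A)$ that makes the whole left-hand side symmetric; everything else is the elementary additivity of codimension along a chain of nested subspaces.
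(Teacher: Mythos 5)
There is a genuine gap in your part (i). Writing $a=\dim(\dom{\widetilde A}/\dom A)$, $b=\dim(\dom B^*/\dom{\widetilde A})$, $c=\dim(\dom{\widetilde A}^*/\dom B)$ and $d=\dim(\dom A^*/\dom{\widetilde A}^*)$, your two chain identities say $a+b=n(A^*,B)$ and $c+d=n(A^*,B)$. But \eqref{2.81} asserts the \emph{cross} pairings $b+d=n(A^*,B)$ and $a+c=n(A^*,B)$, and these do not follow formally from your two identities: the numbers $a=0$, $b=3$, $c=1$, $d=2$ satisfy $a+b=c+d=3$ while $b+d=5$ and $a+c=1$. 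So the step where you ``match the four summands correctly'' is precisely where the content of the proposition lies; your argument supplies no relation linking the $\dom A\subset\dom{\widetilde A}\subset\dom B^*$ chain to the $\dom B\subset\dom{\widetilde A}^*\subset\dom A^*$ chain. The paper obtains the missing relation by passing to the symmetric operator $S$ of \eqref{2.1} and its selfadjoint extension $T$ (with ${\widetilde A}$ and ${\widetilde A}^*$ in the off-diagonal corners): J.~von Neumann's formula for $S\subset T=T^*\subset S^*$ gives $\dim(\dom T/\dom S)=\dim(\dom S^*/\dom T)=n_{\pm}(S)$, which reads exactly $a+c=b+d=n_{\pm}(S)$, and $n_{\pm}(S)=n(A^*,B)$ by Proposition \ref{prop2.2}.

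Your approach can be repaired while staying at the level of dual pairs: apply Corollary \ref{cor2.1} not only to $\{A,B\}$ but also to the intermediate dual pairs $\{{\widetilde A},B\}$ and $\{A,{\widetilde A}^*\}$ (legitimate when ${\widetilde A}$ is closed, since ${\widetilde A}\subset B^*$, $B\subset{\widetilde A}^*$ and $A\subset{\widetilde A}={\widetilde A}^{**}$, ${\widetilde A}^*\subset A^*$). This yields $b=c$ and $a=d$, after which your two chain identities do produce both lines of \eqref{2.81}. Granting part (i), your part (ii) is correct and is essentially the paper's argument, resting on the same additivity relations \eqref{2.103}--\eqref{2.104}. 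Your caveat about infinite quotient dimensions is fair but secondary; the paper does not address it either.
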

      \begin{proof}
(i) Introducing the operator  $T=
\begin{pmatrix}
0&{\widetilde A}\\
{\widetilde A}^*&0
\end{pmatrix}$
we note that
        \begin{equation}\label{2.100}
S=
\begin{pmatrix}
0&A\\
B&0
\end{pmatrix}
\subset T=
\begin{pmatrix}
0&{\widetilde A}\\
{\widetilde A}^*&0
\end{pmatrix}
=T^*\subset
\begin{pmatrix}
0& B^* \\
A^*&0
\end{pmatrix}
=S^*,
     \end{equation}
that is $T$ is a selfadjoint extension of $S$.
Set $n_l=\dim(\dom B^*/\dom{\widetilde A}) + \dim (\dom A^*/\dom{\widetilde A}^*)$ and  $n_r=\dim(\dom {\widetilde A}/\dom{A}) + \dim (\dom {\widetilde A}^*/\dom{B})$.
It follows from  \eqref{2.100} that
      \begin{equation}\label{2.101}
\dim(\dom T/\dom S)=n_r, \qquad \text{and}  \qquad
\dim(\dom S^*/\dom T)=n_l.
      \end{equation}
Since $T$ is a selfadjoint extension of $S$, then  J. von Neumann's formulas
yield
    \begin{equation}\label{2.102}
n_l = n_r = n_{\pm}(S).
   \end{equation}
Combining  \eqref{2.102} with  \eqref{2.1B} we get the required result.

(ii) Now let ${\widetilde A}$ be a quasi-selfadjoint extension of $\{A,B\}$, that is \eqref{2.80} holds. It is clear that
      \begin{equation}\label{2.103}
n(A^*,B)=\dim(\dom A^*/\dom{\widetilde A}^*)+\dim(\dom{\widetilde A}^*/\dom B)             \end{equation}
and
   \begin{equation}\label{2.104}
n(B^*,A)= \dim(\dom B^*/\dom{\widetilde A}) + \dim(\dom{\widetilde A}/\dom A).
   \end{equation}
Combining these formulas with  \eqref{2.80} and \eqref{2.5A}
we get
      \begin{equation}\label{2.105}
\dim(\dom A^*/\dom{\widetilde A}^*)=\dim(\dom B^*/\dom{\widetilde A})
         \end{equation}
It follows from \eqref{2.81}, \eqref{2.105} and \eqref{2.102} that
            \begin{equation}\label{2.106}
n_l=2\ \dim(\dom B^*/\dom{\widetilde A})=2\ \dim(\dom A^*/\dom{\widetilde A}^*)=n(A^*,B).
           \end{equation}
The remaining equalities in \eqref{2.82} are now implied by combining \eqref{2.103}, \eqref{2.104} and \eqref{2.106}.
      \end{proof}
Next we present a criterion for an extension ${\widetilde A}\in \Ext\{A,B\}$
 to be quasi-selfadjoint.
        \begin{proposition}
Let $\{A,B\}$ be a dual pair in $\gH$. Suppose that  for some extension ${\widetilde A}\in Ext\{A,B\}$ the following condition holds
      \begin{equation}\label{2.70}
\dim(\dom B^*/\dom{\widetilde A})=\dim(\dom{\widetilde A}/\dom A).
        \end{equation}
Then ${\widetilde A}$ is a quasi-selfadjoint extension of $\{A,B\}$.
        \end{proposition}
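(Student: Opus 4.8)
The plan is to deduce quasi-selfadjointness of $\widetilde A$ directly from the identity \eqref{2.81} of Proposition \ref{prop2.3}(i), using the hypothesis \eqref{2.70} as the one extra equation needed to force the two defining quantities in \eqref{2.80} to coincide. The underlying observation is that \eqref{2.81} gives two expressions for the single number $n(A^*,B)$, namely
\[
n(A^*,B)=\dim(\dom B^*/\dom\widetilde A)+\dim(\dom A^*/\dom\widetilde A^*)
=\dim(\dom\widetilde A/\dom A)+\dim(\dom\widetilde A^*/\dom B).
\]
Setting these two sums equal and cancelling the equal terms supplied by the hypothesis should immediately produce the required equality of the remaining terms, which is precisely condition \eqref{2.80}.

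Concretely, first I would invoke Proposition \ref{prop2.3}(i), which applies since $\{A,B\}$ is a dual pair and $\widetilde A\in\Ext\{A,B\}$; this yields the chain of equalities in \eqref{2.81} with no further assumptions. Second, I would read off from \eqref{2.81} the single equation obtained by equating the first and second lines, namely
\[
\dim(\dom B^*/\dom\widetilde A)+\dim(\dom A^*/\dom\widetilde A^*)
=\dim(\dom\widetilde A/\dom A)+\dim(\dom\widetilde A^*/\dom B).
\]
Third, I would substitute the hypothesis \eqref{2.70}, which says $\dim(\dom B^*/\dom\widetilde A)=\dim(\dom\widetilde A/\dom A)$, so that these two summands (one on each side) cancel. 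What survives is
\[
\dim(\dom A^*/\dom\widetilde A^*)=\dim(\dom\widetilde A^*/\dom B),
\]
and combined with \eqref{2.70} this chain gives all four quantities in \eqref{2.82} equal, in particular $\dim(\dom\widetilde A/\dom A)=\dim(\dom\widetilde A^*/\dom B)$, which is exactly \eqref{2.80}. Hence $\widetilde A$ is quasi-selfadjoint.

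The only subtlety I would watch for is the bookkeeping when some of these codimensions are infinite, since cancellation of cardinals is not valid in general. In the finite case the argument is a one-line subtraction; in the infinite-dimensional case one should note that $n(A^*,B)$ finite (which is forced as soon as $\widetilde A$ exists with the relevant quotients finite, via \eqref{2.1B} and Proposition \ref{prop2.2}) makes every summand in \eqref{2.81} finite, so the cancellation is legitimate. I expect this finiteness check to be the main, and essentially only, obstacle; the algebraic manipulation itself is routine given the already-established identities \eqref{2.81} and \eqref{2.5A}.
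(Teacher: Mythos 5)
Your strategy is the paper's own: both arguments rest on the two expressions for $n_{\pm}(S)$ supplied by the selfadjoint extension $T$ of $S$ (the two lines of \eqref{2.81}, i.e.\ \eqref{2.71}--\eqref{2.72}), and both cancel the hypothesis \eqref{2.70} against them to obtain $\dim(\dom A^*/\dom\widetilde A^*)=\dim(\dom\widetilde A^*/\dom B)$. Up to that point you match the paper step for step.

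The last inference, however, does not follow as written. Write $x=\dim(\dom B^*/\dom\widetilde A)$, $y=\dim(\dom A^*/\dom\widetilde A^*)$, $u=\dim(\dom\widetilde A/\dom A)$, $v=\dim(\dom\widetilde A^*/\dom B)$. After your cancellation you hold $x=u$ and $y=v$, but the conclusion \eqref{2.80} is the mixed equality $u=v$; the two equalities you have pair the four numbers the other way, and the values $x=u=1$, $y=v=2$ are consistent with everything you have explicitly invoked while violating \eqref{2.80}. What rules this out is the additivity of codimension along the chains $\dom A\subset\dom\widetilde A\subset\dom B^*$ and $\dom B\subset\dom\widetilde A^*\subset\dom A^*$, i.e.\ \eqref{2.104} and \eqref{2.103}: the hypothesis turns \eqref{2.104} into $n(B^*,A)=2u$, the derived equality turns \eqref{2.103} into $n(A^*,B)=2v$, and \eqref{2.5A} then gives $2u=2v$, hence $u=v$. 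This is exactly how the paper closes the argument (its displays \eqref{2.74}--\eqref{2.75}), so the repair is a one-line addition, but as stated your proof stops one identity short of the conclusion. Your caveat about infinite codimensions is fair: the cancellation step is only legitimate when the relevant quotients are finite-dimensional (equivalently $n_{\pm}(S)<\infty$), an assumption the paper leaves tacit and which holds in all its applications to Dirac operators.
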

       \begin{proof}
It is clear from \eqref{2.100}-\eqref{2.102} that
        \begin{equation}\label{2.71}
\dim(\dom B^*/\dom{\widetilde A})+\dim(\dom A^*/\dom{\widetilde A}^*) = n_{\pm}(S)
        \end{equation}
and
           \begin{equation}\label{2.72}
\dim(\dom \widetilde A /\dom{A})+ \dim(\dom {\widetilde A}^*/\dom{B})=n_{\pm}(S)
           \end{equation}
Combining \eqref{2.71} and \eqref{2.72} with \eqref{2.70} we get
         \begin{equation}\label{2.73}
\dim(\dom A^*/\dom{\widetilde A}^*)=\dim(\dom{\widetilde A}^*/\dom B).
          \end{equation}
On the other hand, combining \eqref{2.70} with \eqref{2.104}  we obtain
            \begin{equation}\label{2.74}
2\ \dim(\dom{\widetilde A}/\dom A)=n(B^*,A)=n_{\pm}(S),
             \end{equation}
and combining  \eqref{2.73} with \eqref{2.103} we have
                  \begin{equation}\label{2.75}
2\ \dim(\dom{\widetilde A}^*/ \dom B)=n(A^*,B)=n_{\pm}(S).
                  \end{equation}
Comparing \eqref{2.74} with \eqref{2.75} we get the required result.
          \end{proof}

\noindent{\bf 2.3. Defect numbers of  dual pairs and correct dual pairs.}

Here, we introduce  a concept of defect numbers of a dual pair
and establish their connection with deficiency indices of a symmetric
operator $S.$
We first recall a standard definition.
  \begin{definition}
(i)  The field of regularity $\widehat{\rho}(A)$ of a closed linear operator $A$ is the
set of $\lambda\in\C$ such that  for some  $\eps >0$
       \begin{equation}
\|A f-\lambda f\|\ge\eps\|f\|, \  \  f\in\dom A.
       \end{equation}
%
%
%

(ii)  For a dual pair $\{A,B\}$ we let $\widehat\rho \{A,B\}:=\{\l\in\C:
\l\in\widehat\rho (A)\;\; {\rm and} \;\; \overline \l\in\widehat\rho (B)\}$.
     \end{definition}

Clearly,
$\rho (\wt A)\subset \widehat \rho \{A,B\}$ for each proper extension $\wt A \in
\Ext\{A,B\}$.
We continue with a simple and well-known lemma.
         \begin{lemma}\label{lem2.3.1}
Let $\{A,B\}$ be a dual pair and ${\widetilde A}\in \Ext\{A,B\}$ a proper extension with
nonempty resolvent set $\rho({\widetilde A})$. Then for any
$\lambda_0\in\rho({\widetilde A})$ the following direct decompositions hold
            \begin{equation}\label{2.28A}
\dom{\widetilde A}=\dom A \dot+({\widetilde
A}-\lambda_0)^{-1}\gN_{{\overline\lambda}_0}(A),
           \end{equation}
          \begin{equation}\label{2.29A}
 \dom{\widetilde A}^*=\dom B \dot+({\widetilde A}^*-{\overline\lambda}_0)^{-1}\gN_{\lambda_0}(B).
    \end{equation}
              \end{lemma}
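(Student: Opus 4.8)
\emph{Approach.} The plan is to prove the first decomposition \eqref{2.28A} directly and then obtain \eqref{2.29A} by the adjoint convention recorded above. The starting observation is that since $\lambda_0\in\rho(\widetilde A)\subset\widehat\rho\{A,B\}$, we have $\lambda_0\in\widehat\rho(A)$; hence $A-\lambda_0$ is bounded below and, $A$ being closed, its range $\ran(A-\lambda_0)$ is closed in $\gH$. Taking orthogonal complements and using $(A-\lambda_0)^*=A^*-\overline\lambda_0$ yields the orthogonal decomposition
\begin{equation*}
\gH=\ran(A-\lambda_0)\oplus\ker(A^*-\overline\lambda_0)=\ran(A-\lambda_0)\oplus\gN_{\overline\lambda_0}(A).
\end{equation*}
This is the splitting on the level of $\gH$ that I will transport to $\dom\widetilde A$ via the resolvent.

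\emph{Core step.} To prove the inclusion ``$\subseteq$'' in \eqref{2.28A}, I would take $y\in\dom\widetilde A$ and set $h:=(\widetilde A-\lambda_0)y\in\gH$. By the decomposition above, write $h=(A-\lambda_0)f+g$ with $f\in\dom A$ and $g\in\gN_{\overline\lambda_0}(A)$. Since $A\subset\widetilde A$, we have $(A-\lambda_0)f=(\widetilde A-\lambda_0)f$, so $(\widetilde A-\lambda_0)(y-f)=g$, whence $y-f=(\widetilde A-\lambda_0)^{-1}g$ and $y=f+(\widetilde A-\lambda_0)^{-1}g$. The reverse inclusion is immediate because $(\widetilde A-\lambda_0)^{-1}$ maps $\gH$ into $\dom\widetilde A$ and $\dom A\subset\dom\widetilde A$.

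\emph{Directness and the adjoint case.} For directness, suppose $f+(\widetilde A-\lambda_0)^{-1}g=0$ with $f\in\dom A$ and $g\in\gN_{\overline\lambda_0}(A)$. Applying $\widetilde A-\lambda_0$ gives $(A-\lambda_0)f+g=0$; since the two summands lie in the complementary subspaces $\ran(A-\lambda_0)$ and $\gN_{\overline\lambda_0}(A)$, both vanish, and the injectivity of $A-\lambda_0$ (again from $\lambda_0\in\widehat\rho(A)$) forces $f=0$. This establishes \eqref{2.28A}. The second decomposition \eqref{2.29A} then follows by applying the identical argument to the dual pair $\{B,A\}$, the proper extension $\widetilde A^*$ (note $B\subset\widetilde A^*\subset A^*$ and $\overline\lambda_0\in\rho(\widetilde A^*)$), using $\gN_{\overline{\overline\lambda_0}}(B)=\gN_{\lambda_0}(B)$. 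The only point requiring genuine care — the main, though mild, obstacle — is the passage from membership of $\lambda_0$ in the field of regularity to the closedness of $\ran(A-\lambda_0)$ and hence to an honest orthogonal splitting of $\gH$; the rest is bookkeeping built on the identity $(A-\lambda_0)f=(\widetilde A-\lambda_0)f$ valid on $\dom A$.
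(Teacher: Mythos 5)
Your proof is correct and follows essentially the same route as the paper: establish the orthogonal splitting $\gH=\ran(A-\lambda_0)\oplus\gN_{\overline\lambda_0}(A)$ from $\lambda_0\in\widehat\rho(A)$ and transport it to $\dom\widetilde A$ via $(\widetilde A-\lambda_0)^{-1}$, with \eqref{2.29A} obtained by the symmetric argument for the pair $\{B,A\}$ and the extension $\widetilde A^*$. You merely spell out the directness check and the reverse inclusion, which the paper leaves implicit.
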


\begin{proof}
We prove \eqref{2.28A}. Since $\lambda_0\in\rho({\widetilde A})$, we have
$\lambda_0\in\widehat{\rho}(A)$ and thus $\ran(A-\lambda_0)$ is closed. Therefore,
$\gH=\ran(A-\lambda_0)\oplus \gN_{{\overline\lambda}_0}(A)$. Applying $({\widetilde
A}-\lambda_0)^{-1}$ to this equality gives the desired result.
\end{proof}


   \begin{corollary}\label{cor2.11}
If there exists an extension ${\widetilde A}\in \Ext\{A,B\}$ with $\rho({\widetilde
A})\not =\emptyset$, then
     \begin{equation}
n(A):=\dim\gN_{{\overline\lambda}}(A)=const,\qquad  \lambda\in\rho({\widetilde A}),
     \end{equation}
         \begin{equation}
n(B):=\dim\gN_{\lambda}(B)=const,\qquad  {\overline\lambda}\in\rho({\widetilde A}^*).
          \end{equation}
                 \end{corollary}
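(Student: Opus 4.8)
The plan is to read the constancy directly off the direct-sum decompositions of Lemma \ref{lem2.3.1}. The guiding observation is that in \eqref{2.28A} both the left-hand side $\dom{\widetilde A}$ and the first summand $\dom A$ are fixed subspaces, independent of the point $\lambda_0\in\rho({\widetilde A})$; all of the $\lambda_0$-dependence is carried by the complementary summand $({\widetilde A}-\lambda_0)^{-1}\gN_{{\overline\lambda}_0}(A)$. Comparing dimensions across the decomposition will therefore pin $\dim\gN_{{\overline\lambda}_0}(A)$ to a quantity that does not move with $\lambda_0$.

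First I would fix an arbitrary $\lambda_0\in\rho({\widetilde A})$ and invoke \eqref{2.28A}. Since the decomposition is a \emph{direct} sum, the dimension of the complementary summand equals the quotient dimension $\dim(\dom{\widetilde A}/\dom A)$, a quantity manifestly independent of $\lambda_0$. On the other hand $({\widetilde A}-\lambda_0)^{-1}$ is a bounded bijection of $\gH$ onto $\dom{\widetilde A}$, in particular injective, so it maps $\gN_{{\overline\lambda}_0}(A)$ bijectively onto that complementary summand. Combining these two facts gives
\[
n(A)=\dim\gN_{{\overline\lambda}_0}(A)
=\dim\bigl(({\widetilde A}-\lambda_0)^{-1}\gN_{{\overline\lambda}_0}(A)\bigr)
=\dim(\dom{\widetilde A}/\dom A),
\]
and since the right-hand side does not depend on $\lambda_0$, the left-hand side is constant as $\lambda_0$ ranges over $\rho({\widetilde A})$. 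This establishes the first equality.

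For the second equality I would argue identically using the adjoint decomposition \eqref{2.29A}. If ${\overline\lambda}\in\rho({\widetilde A}^*)$, then $\lambda\in\rho({\widetilde A})$, so taking $\lambda_0=\lambda$ and using the injectivity of $({\widetilde A}^*-{\overline\lambda}_0)^{-1}$ together with the directness of \eqref{2.29A} yields $\dim\gN_{\lambda}(B)=\dim(\dom{\widetilde A}^*/\dom B)$, once more independent of the chosen point. This gives $n(B)=const$.

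The whole argument is essentially immediate once Lemma \ref{lem2.3.1} is in hand, so there is no substantial obstacle; the only point deserving a moment's care is that a \emph{direct} (rather than merely algebraic) decomposition suffices to equate dimensions, and that the resolvent, being injective, preserves the dimension of $\gN_{{\overline\lambda}_0}(A)$ even when this dimension is infinite, in which case the stated equalities are to be read as equalities of cardinalities.
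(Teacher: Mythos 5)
Your proposal is correct and follows exactly the paper's own route: the paper likewise reads the constancy off the decompositions \eqref{2.28A} and \eqref{2.29A}, recording the identities $\dim\gN_{{\overline\lambda}_0}(A)=\dim(\dom{\widetilde A}/\dom A)$ and $\dim\gN_{\lambda_0}(B)=\dim(\dom{\widetilde A}^*/\dom B)$ for $\lambda_0\in\rho({\widetilde A})$. You merely spell out the two small points the paper leaves implicit (directness of the sum and injectivity of the resolvent), which is fine.
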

     \begin{proof}
The proof is immediate from \eqref{2.28A} and \eqref{2.29A} since
    \begin{equation}\label{defn}
\dim\gN_{{\overline\lambda}_0}(A)=\dim(\dom{\widetilde A}/\dom A), \qquad\lambda_0\in\rho({\widetilde A}),
     \end{equation}
     and
         \begin{equation}\label{defnm}
\dim\gN_{\lambda_0}(B)=\dim(\dom{\widetilde A}^*/\dom B), \qquad\lambda_0\in\rho({\widetilde A}).
              \end{equation}
\end{proof}
   \begin{definition}\label{def2.3}
The numbers $n(A)$ and $n(B)$ are called the defect numbers of a dual pair $\{A,B\}$.
       \end{definition}

We emphasize that $n(A)$ and $n(B)$ do not depend on the choice of ${\widetilde A}\in
\Ext\{A,B\}$ with $\rho({\widetilde A})\not =\emptyset$.

Note that, by definition,  $\l_0\in {\widehat\rho}(A,B)$ if there exists $\eps > 0$ such
that
       \begin{equation}\label{2.42_reg-point-for_DP}
\|A f-\lambda_0 f\|\ge\eps\|f\|, \quad   f\in \dom A \quad\text{and}\quad \|Bg
 - \overline \lambda_0 g\|\ge\eps\|g\|, \quad  g \in \dom B.
           \end{equation}
We set $\mathbb{D}(\l_0; \varepsilon) := \{\l\in{\C}: |\l - \l_0| < \varepsilon\}.$ One
easily proves that  $\mathbb{D}(\l_0; \varepsilon) \subset {\widehat\rho}(A,B).$

The next result which can be found in \cite{MM97} and \cite[Proposition 2.10]{Mal01}
proves that there is a proper extension  preserving the  `gap in the spectrum'.
        \begin{proposition}\label{propMMM}
Let $\{A,B\}$ be a dual pair of densely defined operators in $\gH$. Suppose that
for some $\lambda_0\in\C$ we have
\begin{equation}
\|{Af-\lambda_0f}\|\geq\eps\|{f}\|,\ f\in D(A)\quad\hbox{ and }\quad \|Bg-\lambda_0g\|
\geq\eps\|g\|,\ g\in D(B).
  \end{equation}
 Then there exists  an extension
${\widetilde A}\in \Ext\{A,B\}$ preserving the gap, i.e.  such that
      \begin{equation}\label{*}
\mathbb{D}(\l_0; \eps)
=\{\lambda\in{\C}:|\lambda-\lambda_0|<\eps\}\subset\rho({\widetilde A})
        \end{equation}
                and moreover,  the norm-preserving  estimate holds
     \begin{equation}\label{2.45_estimate_below}
\|{\widetilde A}f-\lambda_0f\|\geq\eps\|{f}\|,\quad  f\in D({\widetilde A}).
     \end{equation}
In particular, there exists a proper extension ${\widetilde A}\in \Ext\{A,B\}$ such that
$\lambda_0\in\rho({\widetilde A})$.
    \end{proposition}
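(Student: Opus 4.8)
The plan is to deduce the statement from the extension theory of the symmetric operator $S$ of Proposition \ref{prop2.2}. First I would set $R_{\lambda_0}=\begin{pmatrix}0&\lambda_0 I\\ \overline{\lambda_0} I&0\end{pmatrix}$, a bounded self-adjoint operator on $\gH^2$, and form
\[
S_{\lambda_0}:=S-R_{\lambda_0}=\begin{pmatrix}0&A-\lambda_0\\ B-\overline{\lambda_0}&0\end{pmatrix}
\]
(as in the definition \eqref{2.42_reg-point-for_DP} of $\widehat\rho\{A,B\}$, the hypothesis on $B$ is read with $\overline{\lambda_0}$). Since $\{A-\lambda_0,B-\overline{\lambda_0}\}$ is again a dual pair, $S_{\lambda_0}$ is symmetric on $\dom B\times\dom A$, and for $F=\{f_1,f_2\}$ the two assumed estimates give $\|S_{\lambda_0}F\|^2=\|(A-\lambda_0)f_2\|^2+\|(B-\overline{\lambda_0})f_1\|^2\ge\eps^2\|F\|^2$. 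Thus $S_{\lambda_0}$ is bounded below in norm by $\eps$, so the symmetric interval $(-\eps,\eps)$ lies in its field of regularity.

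The goal then becomes to produce a self-adjoint extension $\widetilde S\supseteq S_{\lambda_0}$ that simultaneously (a) preserves the lower bound, $\|\widetilde SF\|\ge\eps\|F\|$ for $F\in\dom\widetilde S$, and (b) is off-diagonal with respect to the grading $\Gamma=\diag(I,-I)$, i.e. $\widetilde S=\begin{pmatrix}0&\widetilde A-\lambda_0\\ \widetilde A^*-\overline{\lambda_0}&0\end{pmatrix}$; as noted in Remark (iii) after Proposition \ref{prop2.2}, such off-diagonal self-adjoint extensions correspond exactly to proper extensions $\widetilde A\in\Ext\{A,B\}$. Granting such a $\widetilde S$, the passage back is routine: evaluating the lower bound on $F=\{0,f\}$ and on $F=\{g,0\}$ yields $\|(\widetilde A-\lambda_0)f\|\ge\eps\|f\|$ and $\|(\widetilde A^*-\overline{\lambda_0})g\|\ge\eps\|g\|$; the first makes $\widetilde A-\lambda_0$ injective with closed range, while the second, via $\ran(\widetilde A-\lambda_0)^\perp=\ker(\widetilde A^*-\overline{\lambda_0})=\{0\}$, makes its range dense. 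Hence $\widetilde A-\lambda_0$ is a bijection with $\|(\widetilde A-\lambda_0)^{-1}\|\le\eps^{-1}$, so $\lambda_0\in\rho(\widetilde A)$, the estimate \eqref{2.45_estimate_below} holds, and a Neumann series argument promotes this to $\mathbb{D}(\lambda_0;\eps)\subset\rho(\widetilde A)$, which is \eqref{*}; the final assertion follows at once.

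That a symmetric operator with a spectral gap admits a self-adjoint extension preserving the gap (with the same constant $\eps$) is classical, so (a) by itself is standard; the crux, and the step I expect to be the main obstacle, is to obtain a gap-preserving extension that is also off-diagonal, i.e. to combine (a) with (b). Here I would use that $S_{\lambda_0}$ is odd, $\Gamma S_{\lambda_0}\Gamma=-S_{\lambda_0}$. Consequently $\Gamma$ interchanges the deficiency subspaces $\ker(S_{\lambda_0}^*\mp i)$, the map $T\mapsto-\Gamma T\Gamma$ is an involution on the set of self-adjoint extensions of $S_{\lambda_0}$ whose fixed points are exactly the off-diagonal ones, and, since the gap $(-\eps,\eps)$ is symmetric about $0$ and $\sigma(-\Gamma T\Gamma)=-\sigma(T)$, this involution maps gap-preserving extensions to gap-preserving extensions. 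It therefore suffices to exhibit a fixed point within the (non-empty) gap-preserving class. One obtains such a fixed point either by choosing a gap-preserving extension constructed canonically from $S_{\lambda_0}$ alone (hence automatically invariant under the involution), or, more explicitly, by building $\widetilde S$ from a $\Gamma$-compatible unitary between the deficiency subspaces and checking, via the Cayley transform of a gapped extension, that the gap survives. Reading off the $(1,2)$-entry of the resulting $\widetilde S$ then delivers $\widetilde A-\lambda_0$ and finishes the proof. Equivalently, one may bypass $S$ and construct $\widetilde A=\lambda_0+R_0^{-1}$ directly, where $R_0$ is a right inverse of $B^*-\lambda_0$ extending $(A-\lambda_0)^{-1}$ on $\ran(A-\lambda_0)$ with $\|R_0\|\le\eps^{-1}$; producing such a norm-controlled right inverse is the same difficulty in another form.
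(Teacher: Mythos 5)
Your overall strategy --- reducing to the symmetric operator $S$ of Proposition \ref{prop2.2}, shifting to $S_{\lambda_0}$, and hunting for a self-adjoint extension that is simultaneously gap-preserving and off-diagonal with respect to $\Gamma=\diag(I,-I)$ --- is sound, and the bookkeeping in your first two paragraphs (the bound $\|S_{\lambda_0}F\|\ge\eps\|F\|$, the correspondence between off-diagonal self-adjoint extensions of $S_{\lambda_0}$ and proper extensions $\widetilde A\in\Ext\{A,B\}$, and the passage from the two component estimates back to $\lambda_0\in\rho(\widetilde A)$ and then to $\mathbb{D}(\lambda_0;\eps)\subset\rho(\widetilde A)$ via a Neumann series) is correct. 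For the record, the paper gives no proof of this proposition: it is quoted from \cite{MM97} and \cite[Proposition 2.10]{Mal01}. So there is no in-text argument to compare against --- but your proposal has a genuine gap exactly where you predicted it would.

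The gap is the existence of a fixed point of the involution $T\mapsto-\Gamma T\Gamma$ inside the class of gap-preserving self-adjoint extensions of $S_{\lambda_0}$. An involution on a non-empty set need not have a fixed point, so "it suffices to exhibit a fixed point" is the whole problem, and neither of your two suggested routes closes it. The "canonical extension" route is doubtful: for a two-sided gap the natural distinguished extensions are the endpoints $\mathcal{T}_\mu,\mathcal{T}_M$ of Krein's operator interval of self-adjoint contractive extensions of $\eps S_{\lambda_0}^{-1}\!\upharpoonright\ran S_{\lambda_0}$, and since $\mathcal{T}\mapsto-\Gamma\mathcal{T}\Gamma$ is order-reversing it swaps these two rather than fixing either; the Cayley-transform route is not carried out and is not routine. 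The repair is to linearize: the set of all self-adjoint contractions $\mathcal{T}$ on $\gH^2$ extending $\eps S_{\lambda_0}^{-1}$ on $\ran S_{\lambda_0}$ is non-empty (Krein \cite{Kre47}) and \emph{convex}, and it is invariant under the affine involution $\mathcal{T}\mapsto-\Gamma\mathcal{T}\Gamma$ (note $\ran S_{\lambda_0}=\ran(A-\lambda_0)\times\ran(B-\overline{\lambda}_0)$ is $\Gamma$-invariant), so $\mathcal{S}=(\mathcal{T}-\Gamma\mathcal{T}\Gamma)/2$ is a fixed point in the set. Any such $\mathcal{S}$ is automatically injective: if $\mathcal{S}F=0$ then $(F,\eps G)=(\mathcal{S}F,S_{\lambda_0}G)=0$ for all $G$ in the dense set $\dom S_{\lambda_0}=\dom B\times\dom A$. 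Hence $\widetilde S=\eps\,\mathcal{S}^{-1}$ is a self-adjoint, off-diagonal extension of $S_{\lambda_0}$ with $\|\widetilde S F\|\ge\eps\|F\|$, and your concluding paragraph takes over. This averaging-of-contractions device is exactly the one the paper does write down, with $j\,\mathcal{T}^*j$ in place of $-\Gamma\mathcal{T}\Gamma$, in its proof of the $j$-symmetric analogue, Proposition \ref{prop_gap_preserving}; with it inserted, your argument is complete.
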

%
  \begin{remark}\label{rem_gap_for_DP}
We emphasize  that estimate  \eqref{2.45_estimate_below} implies inclusion \eqref{*} but
not vice versa. These conditions are indeed equivalent  for a selfadjoint operator
${\widetilde A} = {\widetilde A}^*$.

Note also that in \cite{MM97} and \cite[Proposition 2.10]{Mal01} estimate
\eqref{2.45_estimate_below} was proved but was not stated  explicitly. In fact, this
estimate is useful in the sequel.
 \end{remark}
We have the following decompositions of the domains of $A^*$ and  $B^*$.
        \begin{lemma} \label{lemMalMog}
 Let $\{A,B\}$ be a dual pair of densely defined closed operators in $\gH$. Suppose that $\l_0\in\widehat \rho
 \{A,B\}$,
 and let $\widetilde A$ be a proper extension of the dual pair with $\lambda_0\in\rho(\widetilde A)$.
 Then the following direct decompositions hold
       \begin{equation}\label{2.11}
\dom B^*=\dom A \dot+ ({\widetilde A}-\lambda_0)^{-1}\gN_{{\overline\lambda}_0}(A) \dot+
\gN_{\lambda_0}(B),
       \end{equation}
            \begin{equation}\label{2.12}
\dom A^* = \dom B \dot+ ({\widetilde A}^*-\overline{\lambda}_0)^{-1}\gN_{{\lambda}_0}(B)
\dot+ \gN_{{\overline\lambda}_0}(A).
             \end{equation}
       \end{lemma}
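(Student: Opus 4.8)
The plan is to derive \eqref{2.11} by refining the decomposition \eqref{2.3A} from Proposition \ref{prop2.1} using the resolvent of the proper extension $\widetilde A$. First I observe that since $\l_0\in\widehat\rho\{A,B\}$ and $\l_0\in\rho(\widetilde A)$, Lemma \ref{lem2.3.1} already supplies the direct decomposition
\be
\dom\widetilde A = \dom A \dot+ (\widetilde A-\l_0)^{-1}\gN_{\overline\l_0}(A),
\ee
so the first two summands on the right of \eqref{2.11} assemble precisely into $\dom\widetilde A$. It therefore suffices to prove the direct decomposition
\be
\dom B^* = \dom\widetilde A \dot+ \gN_{\l_0}(B).
\ee
Here I would use that $\widetilde A\in\Ext\{A,B\}$ means $A\subset\widetilde A\subset B^*$, so $\gN_{\l_0}(B)=\ker(B^*-\l_0)\subset\dom B^*$ and $\dom\widetilde A\subset\dom B^*$, making the sum on the right well defined inside $\dom B^*$.

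For the directness of the sum $\dom\widetilde A\dot+\gN_{\l_0}(B)$, suppose $f\in\dom\widetilde A\cap\gN_{\l_0}(B)$. Then $B^*f=\widetilde A f=\l_0 f$ and $f\in\dom\widetilde A$, so $(\widetilde A-\l_0)f=0$; since $\l_0\in\rho(\widetilde A)$, the operator $\widetilde A-\l_0$ is injective and hence $f=0$. For the surjectivity, I take an arbitrary $u\in\dom B^*$ and set $v:=(B^*-\l_0)u$. Because $\l_0\in\rho(\widetilde A)$, the map $\widetilde A-\l_0$ is a bijection onto $\gH$, so I may put $u_0:=(\widetilde A-\l_0)^{-1}v\in\dom\widetilde A$. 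Using $\widetilde A\subset B^*$ one has $(B^*-\l_0)u_0=(\widetilde A-\l_0)u_0=v=(B^*-\l_0)u$, whence $u-u_0\in\ker(B^*-\l_0)=\gN_{\l_0}(B)$ and $u=u_0+(u-u_0)$ realizes the required splitting.

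Combining the two displays yields \eqref{2.11}, and \eqref{2.12} follows by the symmetric argument with the roles of $A$ and $B$ interchanged and $\l_0$ replaced by $\overline\l_0$ (using $\overline\l_0\in\rho(\widetilde A^*)$), as noted in the remark following Proposition \ref{prop2.1}. The main obstacle I anticipate is not any single computation but rather keeping the three notions of directness straight: verifying that $\dom A$ and $(\widetilde A-\l_0)^{-1}\gN_{\overline\l_0}(A)$ meet only in $\{0\}$ is already handled by Lemma \ref{lem2.3.1}, so the genuinely new content is the clean separation of $\gN_{\l_0}(B)$ from all of $\dom\widetilde A$, which is exactly where the hypothesis $\l_0\in\rho(\widetilde A)$ (as opposed to mere membership in $\widehat\rho\{A,B\}$) is essential.
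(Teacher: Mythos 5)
Your proposal is correct and follows essentially the same route as the paper: both hinge on the identity $u=\bigl[u-(\widetilde A-\lambda_0)^{-1}(B^*-\lambda_0)u\bigr]+(\widetilde A-\lambda_0)^{-1}(B^*-\lambda_0)u$ together with the decomposition $\gH=\ran(A-\lambda_0)\oplus\gN_{\overline\lambda_0}(A)$, the only cosmetic difference being that you factor through $\dom\widetilde A$ via Lemma \ref{lem2.3.1} whereas the paper splits $(B^*-\lambda_0)u$ directly into the three summands. Your explicit verification of directness (which the paper dismisses as ``easy to show'') is a welcome addition.
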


            \begin{proof}
            We prove \eqref{2.11}. Clearly, we have $\dom B^*\supseteq\dom A+({\widetilde A}-\lambda_0)^{-1}\gN_{{\overline\lambda}_0}(A)+\gN_{\lambda_0}(B)$. On the other hand, directness of the sum is easy to show and  for any $u\in \dom (B^*)$ we have
            \bea u &=&  u - ({\widetilde A}-\lambda_0)^{-1} (B^*-\lambda_0) u +  ({\widetilde A}-\lambda_0)^{-1} (B^*-\lambda_0) u
            \eea
            Since $\lambda_0\in{\widehat\rho}(A)$, we have that $\ran(A-\lambda_0)$ is closed and so $\gH=\ran(A-\lambda_0)\oplus \gN_{{\overline\lambda}_0}(A)$ and
            $$ (B^*-\lambda_0) u =  \left[ (A-\lambda_0)g+h\right]$$
            for some $g\in D(A)$ and $h\in \gN_{\overline{\lambda}_0}(A)$.
            Thus, \bea u
            &=& u - ({\widetilde A}-\lambda_0)^{-1} (B^*-\lambda_0) u +  ({\widetilde A}-\lambda_0)^{-1} \left[ (A-\lambda_0)g+h\right]\\
            &=& \left[u - ({\widetilde A}-\lambda_0)^{-1} (B^*-\lambda_0) u\right] +  g +({\widetilde A}-\lambda_0)^{-1} h.
            \eea
            Here, the first term on the right lies in $\gN_{\lambda_0}(B)$, $g$ is in $\dom(A)$ and  we have $({\widetilde A}-\lambda_0)^{-1} h\in (\widetilde{A}-\lambda_0)^{-1}\gN_{\overline{\lambda}_0}(A)$, proving the decomposition.
            \end{proof}
This result directly leads to some identities for deficiency indices.
          \begin{corollary}
Additionally to the conditions of Lemma \ref{lemMalMog}, let $S$ be an operator of the form  \eqref{2.1}. Then
        \begin{equation}\label{2.13}
n_{\pm}(S)=\dim\gN_{{\overline\lambda}_0}(A)+\dim\gN_{\lambda_0}(B).
           \end{equation}
      \end{corollary}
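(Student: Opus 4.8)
The plan is to read off \eqref{2.13} directly from the identity $n_\pm(S)=n(B^*,A)$ proved in Proposition \ref{prop2.2}(i) together with the direct-sum decomposition \eqref{2.11} of $\dom B^*$ supplied by Lemma \ref{lemMalMog}. Since by definition $n(B^*,A)=\dim(\dom B^*/\dom A)$, the entire task reduces to computing the codimension of $\dom A$ inside $\dom B^*$, which \eqref{2.11} makes transparent.

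First I would recall from Proposition \ref{prop2.2}(i) that $n_+(S)=n_-(S)=n(B^*,A)=\dim(\dom B^*/\dom A)$, so it suffices to show that this quotient has dimension $\dim\gN_{\overline{\lambda}_0}(A)+\dim\gN_{\lambda_0}(B)$. Next I would invoke the decomposition \eqref{2.11},
$$
\dom B^*=\dom A \dot+ (\widetilde A-\lambda_0)^{-1}\gN_{\overline{\lambda}_0}(A) \dot+ \gN_{\lambda_0}(B),
$$
which holds under the hypotheses of Lemma \ref{lemMalMog}. Because the sum is direct, the quotient $\dom B^*/\dom A$ is isomorphic to the direct sum of the two complementary summands, whence $\dim(\dom B^*/\dom A)=\dim\bigl((\widetilde A-\lambda_0)^{-1}\gN_{\overline{\lambda}_0}(A)\bigr)+\dim\gN_{\lambda_0}(B)$.

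The only point requiring a word is that $(\widetilde A-\lambda_0)^{-1}\gN_{\overline{\lambda}_0}(A)$ and $\gN_{\overline{\lambda}_0}(A)$ have the same dimension. This is immediate: since $\lambda_0\in\rho(\widetilde A)$, the resolvent $(\widetilde A-\lambda_0)^{-1}$ is a bounded bijection of $\gH$ onto $\dom\widetilde A$, in particular injective, so it restricts to a linear isomorphism of $\gN_{\overline{\lambda}_0}(A)$ onto its image. Substituting $\dim\bigl((\widetilde A-\lambda_0)^{-1}\gN_{\overline{\lambda}_0}(A)\bigr)=\dim\gN_{\overline{\lambda}_0}(A)$ then yields \eqref{2.13}.

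There is no genuine obstacle here; the statement is a bookkeeping corollary of the two named results, and the only thing to be careful about is that all three summands in \eqref{2.11} are independent (which is exactly what the direct-sum notation $\dot+$ asserts), so that the dimensions add. Should I prefer, I would run the identical argument on $\dom A^*$ through the companion decomposition \eqref{2.12} and the identity $n_\pm(S)=n(A^*,B)$ from Proposition \ref{prop2.2}(i), obtaining the same count with the roles of the two defect subspaces interchanged.
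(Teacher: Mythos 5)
Your proof is correct and follows essentially the same route as the paper: the authors likewise combine the identity $n_{\pm}(S)=n(B^*,A)=\dim(\dom B^*/\dom A)$ from Proposition \ref{prop2.2}(i) (equation \eqref{2.1B}) with the direct decomposition \eqref{2.11} of $\dom B^*$ to read off \eqref{2.13}. Your explicit remark that $(\widetilde A-\lambda_0)^{-1}$ maps $\gN_{\overline{\lambda}_0}(A)$ isomorphically onto its image is left implicit in the paper but is a welcome clarification.
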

           \begin{proof}
It follows from \eqref{2.1B} and \eqref{2.11}  that
      \begin{equation}\label{2.14}
n(B^*,A)=\dim(\dom B^*/\dom A) =   \dim\gN_{{\overline\lambda}_0}(A) + \dim\gN_{{\lambda_0}}(B),
          \end{equation}
Comparing  \eqref{2.14}  with \eqref{2.1B} we get the required result.
                       \end{proof}
  \begin{corollary} \label{cor:def}
Additionally to the conditions of Lemma \ref{lemMalMog} suppose that
        \begin{equation}\label{2.18}
\dim\gN_{{\overline\lambda}_0}(A)=\dim\gN_{\lambda_0}(B).
          \end{equation}
Then
       \begin{equation}\label{2.19}
n_{\pm}(S)=2\ \dim\gN_{{\overline\lambda}_0}(A) = 2\ \dim\gN_{\lambda_0}(B).                           \end{equation}
       \end{corollary}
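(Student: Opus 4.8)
The final statement to prove is Corollary~\ref{cor:def}, which asserts that under the additional hypothesis $\dim\gN_{\overline\lambda_0}(A)=\dim\gN_{\lambda_0}(B)$, one has $n_\pm(S)=2\dim\gN_{\overline\lambda_0}(A)=2\dim\gN_{\lambda_0}(B)$.

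The plan is to derive this immediately from the previous corollary, namely from the identity \eqref{2.13}, which states that
\[
n_\pm(S)=\dim\gN_{\overline\lambda_0}(A)+\dim\gN_{\lambda_0}(B).
\]
First I would invoke this formula, which holds under the standing hypotheses inherited from Lemma~\ref{lemMalMog} (that $\lambda_0\in\widehat\rho\{A,B\}$ and that there is a proper extension $\widetilde A$ with $\lambda_0\in\rho(\widetilde A)$). Then the single additional assumption \eqref{2.18} lets me replace the sum of two (a priori distinct) dimensions by twice either one of them. Substituting $\dim\gN_{\lambda_0}(B)=\dim\gN_{\overline\lambda_0}(A)$ into \eqref{2.13} yields $n_\pm(S)=2\dim\gN_{\overline\lambda_0}(A)$, and symmetrically $n_\pm(S)=2\dim\gN_{\lambda_0}(B)$, which is exactly \eqref{2.19}.

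There is essentially no obstacle here: the corollary is a one-line consequence of \eqref{2.13} together with the hypothesis \eqref{2.18}, so the entire content has already been established in deriving \eqref{2.13} (which itself rests on the direct-sum decomposition \eqref{2.11} of Lemma~\ref{lemMalMog} and the deficiency-index identity \eqref{2.1B} from Proposition~\ref{prop2.2}). The only thing worth double-checking is that the two defect subspaces appearing in \eqref{2.13} are precisely the two whose equality is assumed in \eqref{2.18}, namely $\gN_{\overline\lambda_0}(A)$ and $\gN_{\lambda_0}(B)$ with the matching spectral parameters; this is a purely bookkeeping verification and presents no real difficulty.

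In summary, the proof consists of a single substitution step, and I would write it out as: combine \eqref{2.13} with the equality \eqref{2.18} of the two defect dimensions to obtain \eqref{2.19}.
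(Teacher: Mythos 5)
Your proof is correct and matches the paper's (implicit) argument: the corollary is stated without proof precisely because it follows by substituting the hypothesis \eqref{2.18} into the identity \eqref{2.13} of the preceding corollary, exactly as you do.
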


\noindent{\bf 2.4. $j$-symmetric and $j$-selfadjoint operators.}

In this subsection, we study $j$-symmetric operators. Making use of the theory of dual
pairs we are able to show various results on deficiency indices for such operators. We
start with a definition. Recall that  $j$ is  a conjugation operator in $\gH$, if $j$ is
an anti-linear operator satisfying
        \begin{equation}
(ju, v)_{\gH}=(jv, u)_{\gH}, \quad   u,v\in\gH \qquad \text{and}\qquad j^2=I.
          \end{equation}
In particular,
     \begin{equation}
(ju, jv)_{\gH}=(v,u)_{\gH},  \qquad   u,v\in\gH.
         \end{equation}
      \begin{definition} \label{def2.4}
(i) A densely defined linear operator $A$ in $\gH$ is called  $j$-symmetric if
    \begin{equation}
A\subseteq jA^*j \qquad  (\text{or equivalently, if}\  jAj\subseteq A^*).
        \end{equation}

(ii) The operator $A$ is called $j$-selfadjoint if
    \begin{equation}
A=jA^*j \qquad (\text{or equivalently, if} \  jAj=A^*).
     \end{equation}
       \end{definition}

The following two results were originally  proved by Race \cite{Rac85}.
For the sake of completeness we present  proofs here
deducing them from Proposition \ref{prop2.1}.
        \begin{proposition}  \label{propRace1}
 Let $A$ be a densely defined closed $j$-symmetric operator in $\gH$. Then the following decompositions hold
         \begin{equation}\label{9}
\dom A^*=\dom(j A j)\oplus_A\ker\bigl(I+(j A^*)^2\bigr)
             \end{equation}
         \begin{equation}\label{2.10}
\dom(j A^* j)=\dom A\oplus_{j A j}\ker\bigl(I+(A^* j)^2\bigr)
             \end{equation}
In particular, a $j$-symmetric operator $A$ is $j$-selfadjoint if and only if
             \begin{equation}\label{2.11B}
\ker\bigl(I+(j A^*)^2\bigr)=\{0\}\qquad (\text{or equivalently, if}\ \ker(I+(A^* j)^2)=\{0\}\bigr).
            \end{equation}
                  \end{proposition}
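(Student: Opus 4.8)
The plan is to recognise the $j$-symmetry of $A$ as the assertion that $\{A,\,jAj\}$ is a dual pair, and then to read off \eqref{9} and \eqref{2.10} directly from the von Neumann-type decompositions \eqref{2.2A} and \eqref{2.3A} of Proposition \ref{prop2.1}. By Definition \ref{def2.4}, $A$ is $j$-symmetric precisely when $jAj\subseteq A^*$, which (on conjugating by $j$) is the same as $A\subseteq jA^*j$. Setting $B:=jAj$, the two inclusions $B\subseteq A^*$ and $A\subseteq B^*$ are then exactly the defining relations \eqref{DP2} of a dual pair, provided $B^*$ is correctly identified.

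The main point to check, and the one requiring care because $j$ is anti-linear, is the identity $(jAj)^*=jA^*j$, together with the fact that $B=jAj$ is closed and densely defined so that Proposition \ref{prop2.1} applies. Density of $\dom B = j\,\dom A$ and closedness of $B$ follow since $j$ is an anti-linear isometric involution and $A$ is closed. For the adjoint identity I would fix $g$ and take $f=jh$ with $h\in\dom A$, and compute, using $j^2=I$ and the conjugation property $(ju,v)=(jv,u)$, that $(jAjf,g)=(jAh,g)=(jg,Ah)$. Then $g\in\dom(jAj)^*$ with $(jAj)^*g=g^*$ is equivalent to $(Ah,jg)=(h,jg^*)$ for all $h\in\dom A$, that is, to $jg\in\dom A^*$ and $A^*jg=jg^*$. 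This says precisely that $g\in\dom(jA^*j)$ and $(jAj)^*g=jA^*jg$, establishing $(jAj)^*=jA^*j$ and hence $B^*=jA^*j\supseteq A$, so that $\{A,jAj\}$ is indeed a dual pair of closed densely defined operators. I expect this verification to be the only genuinely nontrivial step, exactly because the anti-linearity of $j$ forces one to track the complex conjugations in the inner products.

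It then remains to substitute $B=jAj$ and $B^*=jA^*j$ into \eqref{2.2A} and \eqref{2.3A}. Since $B^*A^*=jA^*jA^*=(jA^*)^2$ and $A^*B^*=A^*jA^*j=(A^*j)^2$, and since $\oplus_B=\oplus_{jAj}$, the decompositions \eqref{2.2A} and \eqref{2.3A} turn literally into \eqref{9} and \eqref{2.10}. Finally, for the criterion \eqref{2.11B} I would invoke the ``in particular'' clause \eqref{2.3B} of Proposition \ref{prop2.1}: $A^*=B$ holds if and only if $\ker(I+B^*A^*)=\{0\}$. Here $A^*=B=jAj$ is equivalent, on conjugating by $j$, to $A=jA^*j$, i.e. to $j$-selfadjointness of $A$; and $\ker(I+B^*A^*)=\ker(I+(jA^*)^2)$, the parenthetical equivalent form $\ker(I+(A^*j)^2)=\{0\}$ coming in the same way from the second half of \eqref{2.3B}. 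This yields the stated characterisation.
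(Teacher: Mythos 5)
Your proposal is correct and follows essentially the same route as the paper: set $B=jAj$, observe that $\{A,B\}$ is a dual pair with $B^*=jA^*j$, and apply Proposition \ref{prop2.1} together with \eqref{2.3B}. The only difference is that you spell out the verification of $(jAj)^*=jA^*j$ (correctly handling the anti-linearity of $j$), which the paper simply asserts.
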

             \begin{proof}
Let $B=jAj$. Then $\{A,B\}$ is a dual pair of closed densely defined linear operators. Noting that $B^*=j A^* j$ and applying Proposition \ref{prop2.1}
we get
      \begin{equation}
\dom A^*=\dom(j A j)\oplus_A\ker(I+j A^* j A^*)\\
=\dom(j A j)\oplus_A\ker\bigl(I+(j A^*)^2\bigr).
         \end{equation}
Formula \eqref{2.11B} is implied by \eqref{2.3B}.
          \end{proof}

         \begin{proposition}\label{propRace2}
Let $A$ be a closed $j$-symmetric operator in $\gH$ and ${\widetilde A}$ any $j$-selfadjoint extension of $A$. Then
        \begin{equation}\label{2.108}
\dim\bigl(\dom(j A^* j)/\dom{\widetilde A}\bigr)=\dim\bigl(\dom\widetilde A/\dom A\bigr).
          \end{equation}
                \end{proposition}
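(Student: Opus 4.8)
The plan is to recognize Proposition \ref{propRace2} as a special case of the general theory of correct dual pairs developed in Subsection 2.2, applied to the dual pair $\{A, jAj\}$. The key observation is that a $j$-selfadjoint extension $\widetilde A$ of a $j$-symmetric operator $A$ is precisely a quasi-selfadjoint extension of this dual pair, so that the conclusion \eqref{2.108} will follow directly from \eqref{2.82}.

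First I would set $B = jAj$, so that $\{A, B\}$ is a dual pair of closed densely defined operators, with $B^* = jA^*j$. I would then verify the inclusions $A \subset \widetilde A \subset B^*$, i.e.\ that $\widetilde A \in \Ext\{A, B\}$. The inclusion $A \subset \widetilde A$ holds because $\widetilde A$ is an extension of $A$, and $\widetilde A \subset B^* = jA^*j$ holds because $\widetilde A = j\widetilde A^* j$ is $j$-selfadjoint, hence in particular $j$-symmetric, giving $\widetilde A \subseteq j \widetilde A^* j \subseteq jA^*j = B^*$ (using $A \subseteq \widetilde A$, so $\widetilde A^* \subseteq A^*$). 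Thus $\widetilde A$ is indeed a proper extension of the dual pair $\{A, B\}$.

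Next I would establish that $\widetilde A$ is a quasi-selfadjoint extension of $\{A, B\}$, that is, that \eqref{2.80} holds with this choice of $B$:
\begin{equation*}
\dim(\dom \widetilde A / \dom A) = \dim(\dom \widetilde A^* / \dom B).
\end{equation*}
Here I would use $j$-selfadjointness of $\widetilde A$, namely $\widetilde A^* = j \widetilde A j$, together with $B = jAj$. Since $j$ is an (anti-linear, involutive) conjugation, it maps $\dom \widetilde A$ onto $\dom \widetilde A^* = \dom(j\widetilde A j)$ and $\dom A$ onto $\dom(jAj) = \dom B$ bijectively, preserving the dimensions of the respective quotient spaces. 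This yields $\dim(\dom \widetilde A / \dom A) = \dim(\dom \widetilde A^* / \dom B)$, which is exactly the quasi-selfadjointness condition.

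With $\widetilde A$ a quasi-selfadjoint extension of $\{A, B\}$ in hand, I would invoke Proposition \ref{prop2.3}(ii). Its conclusion \eqref{2.82} gives, among the chain of equalities,
\begin{equation*}
\dim(\dom B^* / \dom \widetilde A) = \dim(\dom \widetilde A / \dom A).
\end{equation*}
Since $B^* = jA^*j$, the left-hand side is precisely $\dim(\dom(jA^*j)/\dom \widetilde A)$, and so \eqref{2.108} follows. The main obstacle — and the step requiring the most care — is the verification that $j$ induces a genuine bijection on the quotient spaces preserving dimension, since $j$ is anti-linear; one must check that the image of a linearly independent family remains linearly independent and spans the target quotient, which is routine but needs the conjugation identities $j^2 = I$ and the adjoint relation $\widetilde A^* = j\widetilde A j$ to be used correctly.
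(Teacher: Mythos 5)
Your proof is correct and follows essentially the same route as the paper: both reduce the statement to the dual pair $\{A, jAj\}$ and ultimately to von Neumann's formula for the selfadjoint block extension $T$ of $S$. The only difference is packaging — you first verify that $\widetilde A$ is a quasi-selfadjoint extension of $\{A,jAj\}$ (a fact the paper records separately as Proposition \ref{propRacecomplement}) and then cite Proposition \ref{prop2.3}(ii), whereas the paper unwinds that same computation directly in \eqref{2.110}--\eqref{2.112}; your careful remark that the anti-linear conjugation $j$ still induces dimension-preserving bijections of the quotients is precisely the content of \eqref{2.109}.
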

    \begin{proof}
Setting $B=j A j$ we obtain a dual pair $\{A,B\}$. It is clear that
   \begin{equation}
S=
\begin{pmatrix}
0&A\\
j A j&0
\end{pmatrix}
\subset T=
\begin{pmatrix}
0&{\widetilde A}\\
{\widetilde A}^*&0
\end{pmatrix}
=T^*\subset
\begin{pmatrix}
0&j A^* j\\
A^*&0
\end{pmatrix}
=S^*.
     \end{equation}
Since ${\widetilde A}^*=j{\widetilde A}j$ and
   \begin{equation}\label{2.109}
j\dom A^*=\dom(j A^* j), \  j\dom{\widetilde A}=\dom(j{\widetilde A}j)=\dom{\widetilde A}^*, \ j\dom A=\dom(j A j),
    \end{equation}
we get
     \begin{equation}
\dim\bigl(\dom(j A^* j)/\dom{\widetilde A}\bigr)=\dim\bigl(\dom A^*/\dom{\widetilde A}^*\bigr).
    \end{equation}
It follows that
      \begin{equation}\label{2.110}
\dim(\dom S^*/\dom T)=2\ \dim\bigl(\dom(j A^* j)/\dom{\widetilde A}\bigr),
         \end{equation}
               \begin{equation}\label{2.111}
\dim(\dom T/\dom S) = 2\ \dim\bigl(\dom{\widetilde A}/\dom{A}\bigr).
                      \end{equation}
Since $T$ is a selfadjoint extension of $S$, then by J.von Neumann's formulas
         \begin{equation}\label{2.112}
\dim(\dom S^*/\dom T) =  \dim(\dom T/\dom S).
         \end{equation}
Combining \eqref{2.110}, \eqref{2.111} and \eqref{2.112} we get the required result.
               \end{proof}
Next we complement Proposition  \ref{propRace2} by the following simple statement.
     \begin{proposition}\label{propRacecomplement}
Let $A$ be a closed $j$-symmetric operator in $\gH$. Then any $j$-selfadjoint extension ${\widetilde A}$ of $A$ is a quasi-selfadjoint extension of the dual pair $\{A, jAj\}$.
        \end{proposition}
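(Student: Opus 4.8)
The plan is to unwind the definition of quasi-selfadjointness (Definition~\ref{def2.2}(i)) for the dual pair $\{A, jAj\}$ and recognize that it is exactly the content already established in Proposition~\ref{propRace2}. Set $B = jAj$, so that $\{A,B\}$ is a dual pair and $B^* = jA^*j$. By Definition~\ref{def2.2}(i), the extension $\widetilde A$ is quasi-selfadjoint for $\{A,B\}$ precisely when
\begin{equation*}
\dim(\dom \widetilde A/\dom A) = \dim(\dom \widetilde A^*/\dom B).
\end{equation*}
The right-hand quantity is $\dim(\dom \widetilde A^*/\dom (jAj))$, so the task reduces to matching this against the identity supplied by Proposition~\ref{propRace2}.

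The key step is to translate Proposition~\ref{propRace2} into the two domain-quotients above using the intertwining relations \eqref{2.109}, namely $j\,\dom A^* = \dom(jA^*j) = \dom B^*$ and, since $\widetilde A$ is $j$-selfadjoint, $\widetilde A^* = j\widetilde A j$ so that $\dom \widetilde A^* = j\,\dom \widetilde A$. First I would note that $\widetilde A \in \Ext\{A,B\}$: indeed $A \subset \widetilde A$ holds because $\widetilde A$ extends $A$, and $\widetilde A \subset B^* = jA^*j$ follows from $\widetilde A = j\widetilde A^* j = j\widetilde A^{-1*}\!j \subseteq jA^*j$ using $j$-selfadjointness of $\widetilde A$ together with $A \subseteq \widetilde A$ (so $\widetilde A^* \subseteq A^*$, hence $j\widetilde A^* j \subseteq jA^*j$). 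Thus $\widetilde A$ is genuinely a proper extension of the dual pair, and the defining equality of quasi-selfadjointness is meaningful.

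Next I would apply the anti-linear bijection $j$, which is isometric in the sense $(ju,jv) = (v,u)$, to pass between the relevant quotient spaces. Because $j$ maps $\dom(jA^*j)$ bijectively onto $\dom A^*$ and $\dom \widetilde A$ onto $\dom \widetilde A^*$, it induces a bijection of the quotient $\dom(jA^*j)/\dom \widetilde A$ onto $\dom A^*/\dom \widetilde A^*$, so these have equal dimension; the same reasoning with the roles adjusted gives $\dim(\dom \widetilde A^*/\dom(jAj)) = \dim(\dom(jA^*j)/\dom \widetilde A)$ after applying $j$ and using $j\dom\widetilde A^* = \dom\widetilde A$, $j\dom(jAj)=\dom A$. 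Proposition~\ref{propRace2} then reads
\begin{equation*}
\dim(\dom(jA^*j)/\dom \widetilde A) = \dim(\dom \widetilde A/\dom A),
\end{equation*}
and combining this with the $j$-induced equality $\dim(\dom \widetilde A^*/\dom(jAj)) = \dim(\dom(jA^*j)/\dom \widetilde A)$ yields $\dim(\dom \widetilde A^*/\dom B) = \dim(\dom \widetilde A/\dom A)$, which is exactly \eqref{2.80} for the dual pair $\{A,jAj\}$.

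The only real obstacle I anticipate is bookkeeping: keeping straight which quotient the conjugation $j$ carries to which, and verifying that $j$ genuinely induces a well-defined dimension-preserving map on the quotient spaces (this requires $j(\dom \widetilde A) = \dom \widetilde A^*$ and $j(\dom A) = \dom(jAj)$, both of which are in \eqref{2.109}). Since $j$ is an involutive anti-linear isometry, it sends a linear complement of $\dom A$ in $\dom \widetilde A$ to a complement of $\dom(jAj)$ in $\dom \widetilde A^*$ of the same (complex) dimension, so no subtlety about anti-linearity altering dimensions arises. Everything else is a direct citation of Proposition~\ref{propRace2}, so the proof is genuinely short once the identifications are in place.
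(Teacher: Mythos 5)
Your proof is correct and uses the same mechanism as the paper: the conjugation $j$ carries $\dom\widetilde A^*=j\dom\widetilde A$ onto $\dom\widetilde A$ and $\dom(jAj)$ onto $\dom A$, hence induces a dimension-preserving bijection of the relevant quotients, which is exactly \eqref{2.80} for the dual pair $\{A,jAj\}$. The only remark is that the detour through Proposition~\ref{propRace2} is redundant and your intermediate step is mislabelled: applying $j$ as you describe sends $\dom\widetilde A^*/\dom(jAj)$ directly to $\dom\widetilde A/\dom A$ (not to $\dom(jA^*j)/\dom\widetilde A$), which already finishes the argument in one line, precisely as the paper's proof does.
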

                 \begin{proof}
Since $\dom(j A j)=j\dom A$ and ${\widetilde A}^*=j{\widetilde A}j$,  taking \eqref{2.109} into account, we obtain  that
           \begin{eqnarray}
\dim\bigl(\dom{\widetilde A}^*/\dom(j A j)\bigr)&=&\dim\bigl(\dom(j{\widetilde A}j)/\dom(j A j)\bigr)   \\
&=& \dim(\dom{\widetilde A}/\dom A). \nonumber
             \end{eqnarray}
By Definition \ref{def2.2} this means that ${\widetilde A}$ is   quasi-selfadjoint.
\end{proof}

              \begin{lemma}\label{lem2.2}
Let $A$ be a densely defined closed $j$-symmetric operator in $\gH$. Then:
\begin{enumerate}\def\labelenumi{\rm (\roman{enumi})}
\item  the operator
    \begin{equation}\label{2.65_DP_for_j-symmetric}
S=
\begin{pmatrix}
0&A\\
jAj&0
\end{pmatrix}
   \end{equation}
is a closed symmetric operator in $\gH^2=\gH\oplus\gH$ and $n_+(S)=n_-(S);$

\item $S$ is selfadjoint if and only if $A$ is $j$-selfadjoint.
\end{enumerate}
    \end{lemma}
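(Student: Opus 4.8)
The plan is to recognize Lemma \ref{lem2.2} as a direct specialization of the general dual-pair machinery already developed, applied to the dual pair $\{A, jAj\}$. First I would set $B := jAj$ and verify that $\{A,B\}$ is indeed a dual pair of closed densely defined operators. Since $A$ is $j$-symmetric, Definition \ref{def2.4}(i) gives $jAj \subseteq A^*$, i.e.\ $B \subseteq A^*$; applying $j(\cdot)j$ to this inclusion and using $j^2 = I$ together with the standard identity $(jAj)^* = jA^*j$ yields $A = j(jAj)j \subseteq jA^*j = B^*$, so $A \subseteq B^*$ and $B \subseteq A^*$, which is exactly \eqref{DP2}. That $B = jAj$ is closed and densely defined follows immediately from the corresponding properties of $A$ and the fact that $j$ is an isometric involution.

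Having identified $\{A, jAj\}$ as a dual pair, statement (i) is then an application of Proposition \ref{prop2.2}(i): the operator $S$ in \eqref{2.65_DP_for_j-symmetric} is precisely the operator \eqref{2.1} for this dual pair, hence $S$ is a symmetric operator in $\gH^2$ with equal deficiency indices $n_+(S) = n_-(S)$. The only point requiring a word of justification beyond a bare citation is that $S$ is \emph{closed}, which is not explicitly asserted in Proposition \ref{prop2.2}; this follows because $A$ and $B = jAj$ are both closed and $\dom(S) = \dom(B)\times\dom(A)$, so the graph of $S$ is the image under a bounded bijection of $\graph(B)\times\graph(A)$, which is closed in $\gH^4$.

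For statement (ii), I would invoke Proposition \ref{prop2.2}(ii), which tells us that $S$ is selfadjoint if and only if $A = B^*$. It then remains to translate the condition $A = B^* = jA^*j$ into the statement that $A$ is $j$-selfadjoint. By Definition \ref{def2.4}(ii), $A$ is $j$-selfadjoint precisely when $A = jA^*j$, and since $B^* = (jAj)^* = jA^*j$, the two conditions coincide verbatim. Thus $S$ is selfadjoint if and only if $A$ is $j$-selfadjoint.

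I do not expect any genuine obstacle here, as the lemma is essentially a dictionary entry recording how Proposition \ref{prop2.2} reads when specialized to $B = jAj$. The only step meriting care is the computation $(jAj)^* = jA^*j$ and the resulting equivalences between the $j$-symmetry/$j$-selfadjointness conditions of Definition \ref{def2.4} and the dual-pair conditions \eqref{DP2} and $A = B^*$; once these are checked, both parts of the lemma are immediate consequences of Proposition \ref{prop2.2}. Since this is the ``adjoint-free'' symmetric reformulation rather than a paired statement, no second ``adjoint'' half needs separate treatment.
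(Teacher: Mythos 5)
Your proposal is correct and follows exactly the paper's own route: set $B:=jAj$, check via $j$-symmetry and $(jAj)^*=jA^*j$ that $\{A,B\}$ is a dual pair of closed densely defined operators, and then apply Proposition \ref{prop2.2}. The extra remarks on closedness of $S$ and on matching Definition \ref{def2.4}(ii) with the condition $A=B^*$ are welcome details that the paper leaves implicit.
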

      \begin{proof}
Setting $B:=jAj$, we note that by Definition \ref{def2.2}, we have $A\subset B^*=j A^*
j$ and $B\subset A^*$. Thus, $\{A,B\}$ forms a dual pair of densely defined closed
operators in $\gH$. It remains to apply Proposition  \ref{prop2.2} to obtain the desired
result.
           \end{proof}
%
   \begin{corollary}\label{cor2.25}
 Let $A$ be a $j$-symmetric operator in $\gH$, with a nonempty field of regularity ${\widehat\rho}(A)$
 and let $S$ be the operator of the form  \eqref{2.65_DP_for_j-symmetric}.
%
Then the following relations hold
     \begin{equation}\label{2.20}
2n(A) := 2\text{Def}\, (A)\  := 2\dim\gN_{\overline{\lambda}_0}(A)=n_{\pm}(S),\qquad \hbox{ for any
}\quad \lambda_0\in{\widehat\rho}(A).
      \end{equation}
%
%
%
   \end{corollary}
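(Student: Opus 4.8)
The plan is to realize $A$ as one half of the dual pair $\{A,B\}$ with $B:=jAj$ and to transfer the whole question to the symmetric operator $S$ of the form \eqref{2.65_DP_for_j-symmetric}. By Lemma \ref{lem2.2} this $\{A,B\}$ is indeed a dual pair of closed densely defined operators, $S$ is symmetric with $n_+(S)=n_-(S)$, and by Proposition \ref{prop2.2} we have $n_\pm(S)=n(B^*,A)=\dim(\dom B^*/\dom A)$. Hence it suffices to prove $n(B^*,A)=2\dim\gN_{\overline{\lambda}_0}(A)$, so that the argument reduces to computing the defect numbers of the pair $\{A,B\}$ at the point $\lambda_0$, and this is exactly where the conjugation $j$ enters.

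First I would check that every $\lambda_0\in\widehat\rho(A)$ already lies in $\widehat\rho\{A,B\}$. Writing $g=jf$ with $f\in\dom A$ and using $j^2=I$ together with the antilinearity and the isometry of $j$, one obtains $Bg-\overline{\lambda}_0 g=j(Af-\lambda_0 f)$, whence $\|Bg-\overline{\lambda}_0 g\|=\|Af-\lambda_0 f\|\ge\eps\|f\|=\eps\|g\|$. Thus $\overline{\lambda}_0\in\widehat\rho(B)$, so $\lambda_0\in\widehat\rho\{A,B\}$ in the sense of \eqref{2.42_reg-point-for_DP}. The same antilinear bookkeeping, now applied to the identity $B^*=jA^*j$, shows that $g\in\gN_{\lambda_0}(B)=\ker(B^*-\lambda_0)$ forces $A^*(jg)=\overline{\lambda}_0(jg)$, i.e. $j$ carries $\gN_{\lambda_0}(B)$ into $\gN_{\overline{\lambda}_0}(A)=\ker(A^*-\overline{\lambda}_0)$. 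As $j$ is an antilinear bijection with $j^2=I$, this yields the crucial equality of defect numbers $\dim\gN_{\overline{\lambda}_0}(A)=\dim\gN_{\lambda_0}(B)$, that is, hypothesis \eqref{2.18}.

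With $\lambda_0\in\widehat\rho\{A,B\}$ in hand, Proposition \ref{propMMM} produces a proper extension $\widetilde A\in\Ext\{A,B\}$ with $\lambda_0\in\rho(\widetilde A)$, so that all the hypotheses of Lemma \ref{lemMalMog} are satisfied. The direct decomposition \eqref{2.11} (equivalently, formula \eqref{2.14}) then gives $n(B^*,A)=\dim\gN_{\overline{\lambda}_0}(A)+\dim\gN_{\lambda_0}(B)$, and feeding in the defect-number equality established above — which is precisely the content of Corollary \ref{cor:def} — delivers $n_\pm(S)=2\dim\gN_{\overline{\lambda}_0}(A)$, i.e. \eqref{2.20}. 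Since $n_\pm(S)$ is a single number independent of $\lambda_0$, the identity holds verbatim for every $\lambda_0\in\widehat\rho(A)$, as claimed.

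I expect the only genuinely delicate point to be the careful handling of the antilinearity of $j$ in the two reductions of the middle paragraph, in particular correctly tracking where $\lambda_0$ is replaced by $\overline{\lambda}_0$ and recording the relation $B^*=jA^*j$. Everything else is a routine assembly of the already proved Lemma \ref{lem2.2}, Proposition \ref{prop2.2}, Proposition \ref{propMMM}, Lemma \ref{lemMalMog} and Corollary \ref{cor:def}.
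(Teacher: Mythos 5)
Your proposal is correct and follows essentially the same route as the paper: you establish $\overline{\lambda}_0\in\widehat\rho(B)$ via the isometry $\|Bjf-\overline{\lambda}_0 jf\|=\|Af-\lambda_0 f\|$, show $j$ intertwines the defect subspaces so that $\dim\gN_{\overline{\lambda}_0}(A)=\dim\gN_{\lambda_0}(B)$, and then invoke Corollary \ref{cor:def} (via Proposition \ref{propMMM} and Lemma \ref{lemMalMog}) to conclude $n_{\pm}(S)=2\dim\gN_{\overline{\lambda}_0}(A)$. This matches the paper's argument, with your version merely spelling out the existence of the gap-preserving extension that the paper leaves implicit in its citation of Corollary \ref{cor:def}.
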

             \begin{proof}
Let $\lambda_0\in{\widehat\rho}(A)$. Note that
${\overline\lambda}_0\in{\widehat\rho}(B)$, since for any $f\in\dom A$
    \begin{equation}\label{2.49}
\|Bj f-{\overline\lambda}_0 jf\|=\|j(A f-\lambda_0 f)\|=\|A f-\lambda_0 f\|.
     \end{equation}
We next show that equality \eqref{2.18} is satisfied for any
$\lambda_0\in{\widehat\rho}(A)$. Indeed, for any $f\in\ker(A^*-{\overline\lambda_0})$ we
have with $g=jf$,
        \begin{equation}
B^* g=j A^* j g= jA^*f= j\overline{\lambda}_0f=\lambda_0 jf =\lambda_0 g,
         \end{equation}
that is $g\in\ker(B^*-\lambda_0)$ and $j\ker(A^*-{\overline\lambda}_0)=\ker(B^*-\lambda_0)$.
Hence $\dim\gN_{\bar{\lambda}_0}(A) = \dim\gN_{\lambda_0}(B)$
Now \eqref{2.20} is implied by Corollary \ref{cor:def}.
                  \end{proof}
The next result is similar to Proposition \ref{propMMM} and shows that there is a gap preserving $j$-selfadjoint extension.
          \begin{proposition}\label{prop_gap_preserving}
Let $A$ be a $j$-symmetric operator in $\gH$. Suppose that
       \begin{equation}\label{2+}
\|A f-\lambda_0 f\|\ge\eps\|f\|, \qquad   f\in \dom A,
           \end{equation}
in particular,  $\mathbb{D}(\l_0; \varepsilon) \subset{\widehat\rho}(A)$. Then there
exists a $j$-selfadjoint extension ${\widetilde A}$ of $A$ such that
%
        \begin{equation}\label{3+}
\|{\widetilde A}f-\lambda_0 f\|\ge\eps\|f\|, \quad  f\in\dom{\widetilde A}.
         \end{equation}
%
In particular, the $j$-selfadjoint extension ${\widetilde A}$  preserves  the gap
$\mathbb{D}(\l_0; \varepsilon)$, i.e. $\mathbb{D}(\l_0; \varepsilon) \subset
\rho(\widetilde A)$.
    \end{proposition}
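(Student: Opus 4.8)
The plan is to realise $\wt A$ as a $j$-selfadjoint member of the family of gap-preserving proper extensions produced by Proposition \ref{propMMM}, applied to the dual pair $\{A,B\}$ with $B:=jAj$. First I set $B=jAj$; by Lemma \ref{lem2.2} this is a dual pair of closed densely defined operators. The hypothesis \eqref{2+} says $\lambda_0\in\widehat\rho(A)$, while the identity $\|Bjf-\overline{\lambda}_0\,jf\|=\|j(Af-\lambda_0f)\|=\|Af-\lambda_0f\|\ge\eps\|jf\|$ (cf. the proof of Corollary \ref{cor2.25}) shows $\overline{\lambda}_0\in\widehat\rho(B)$. Hence $\lambda_0\in\widehat\rho\{A,B\}$ and Proposition \ref{propMMM} applies: the class
\[
\mathcal{E}:=\{\wt A\in\Ext\{A,B\}:\ \|\wt A f-\lambda_0 f\|\ge\eps\|f\|,\ f\in\dom\wt A\}
\]
of gap-preserving proper extensions (each satisfying $\lambda_0\in\rho(\wt A)$ by \eqref{2.45_estimate_below}) is nonempty.

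Next I would introduce the involution $\iota:\wt A\mapsto j\wt A^* j$ and check that it preserves $\mathcal{E}$, with fixed points exactly the objects I want. Since $B=jAj$ gives $jBj=A$ and $jA^*j=B^*$, conjugation by $j$ carries the chain $A\subset\wt A\subset B^*$ to $A\subset j\wt A^* j\subset B^*$, so $\iota(\wt A)\in\Ext\{A,B\}$; moreover $\iota^2=\mathrm{id}$ because $(j\wt A^* j)^*=j\wt A j$. For $\wt A\in\mathcal E$ the bound gives $\|(\wt A-\lambda_0)^{-1}\|\le 1/\eps$, hence $\|(\wt A^*-\overline{\lambda}_0)^{-1}\|\le1/\eps$, and writing $u=jg$ with $g\in\dom\wt A^*$ one finds $\|(\iota(\wt A)-\lambda_0)u\|=\|(\wt A^*-\overline{\lambda}_0)g\|\ge\eps\|g\|=\eps\|u\|$, so $\iota(\mathcal{E})\subseteq\mathcal{E}$. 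By Definition \ref{def2.4}, the fixed points of $\iota$ in $\mathcal{E}$ are precisely the $j$-selfadjoint gap-preserving extensions, so it remains to produce one fixed point of $\iota$ in $\mathcal{E}$.

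To produce it I would run the construction underlying Proposition \ref{propMMM} $j$-compatibly. The key structural input is Corollary \ref{cor2.25}: $j$ restricts to an antilinear bijection $\gN_{\overline{\lambda}_0}(A)\to\gN_{\lambda_0}(B)$, so the defect data governing the extensions is itself $j$-symmetric. Fixing a reference $\wt A_0\in\mathcal E$ and using the decomposition \eqref{2.11}, every proper extension with $\lambda_0$ in its resolvent set is described by a bounded parameter $\phi:\gN_{\overline{\lambda}_0}(A)\to\gN_{\lambda_0}(B)$ via $\dom\wt A=\dom A\dot+\{(\wt A_0-\lambda_0)^{-1}h+\phi h:\ h\in\gN_{\overline{\lambda}_0}(A)\}$; the involution $\iota$ then acts on $\phi$ as a conjugate-linear affine involution $\phi\mapsto\Phi(\phi)$ (the antilinearity supplied by the $j$-bijection of Corollary \ref{cor2.25}), whose fixed set is a nonempty real-affine space. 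Selecting a $j$-symmetric parameter yields a $j$-selfadjoint $\wt A$, and the lower bound \eqref{3+} then upgrades via a Neumann series to $\mathbb{D}(\lambda_0;\eps)\subset\rho(\wt A)$. I expect the main obstacle to be exactly here: showing that a $j$-symmetric choice of parameter can be made while retaining the quantitative lower bound, i.e. that the fixed set of $\iota$ genuinely meets $\mathcal E$. This is precisely the point at which the $j$-invariance of the defect subspaces from Corollary \ref{cor2.25} is indispensable, since it is what allows the norm-controlled construction of Proposition \ref{propMMM} to be carried out symmetrically with respect to $j$ rather than only up to the matching of dimensions.
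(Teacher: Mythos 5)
Your setup is sound and matches the paper's opening move: you pass to the dual pair $\{A,B\}$ with $B=jAj$, verify $\lambda_0\in\widehat\rho\{A,B\}$ via the identity $\|Bjf-\overline{\lambda}_0 jf\|=\|Af-\lambda_0 f\|$, and invoke Proposition \ref{propMMM} to get a nonempty class $\mathcal E$ of norm-preserving proper extensions; the observation that $\wt A\mapsto j\wt A^*j$ is an involution of $\Ext\{A,B\}$ preserving $\mathcal E$ is also correct and is implicitly what drives the paper's argument. But the proof is not complete: the entire content of the proposition is the existence of a fixed point of this involution \emph{inside} $\mathcal E$, and you explicitly leave that step open ("I expect the main obstacle to be exactly here"). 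Parametrizing extensions by maps $\phi:\gN_{\overline{\lambda}_0}(A)\to\gN_{\lambda_0}(B)$ and asserting that the induced conjugate-linear affine involution has a nonempty fixed set does not finish the job, because nothing in your argument shows that a fixed parameter can be chosen so that the quantitative bound $\|\wt Af-\lambda_0 f\|\ge\eps\|f\|$ survives; the fixed set of $\Phi$ could a priori miss the (non-convex-looking, in that parametrization) subset corresponding to $\mathcal E$.

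The paper closes exactly this gap by averaging at the level of scaled resolvents rather than extension parameters. Starting from any $\wt A_1\in\mathcal E$, it sets $\mathcal T:=\eps(\wt A_1-\lambda_0)^{-1}$, a contraction, and forms $\mathcal S:=(\mathcal T+j\mathcal T^*j)/2$. This is the midpoint of $\mathcal T$ and its image under the conjugate-linear involution $\mathcal T\mapsto j\mathcal T^*j$, hence automatically a fixed point, i.e.\ $j$-selfadjoint; and it is still a contraction because the unit ball of $\B(\gH)$ is convex -- this is the convexity that your parametrization lacks and that makes the norm bound survive the symmetrization. Both $\mathcal T$ and $j\mathcal T^*j$ extend the non-densely defined contraction $\mathcal T_1=\eps(A-\lambda_0)^{-1}$ on $\ran(A-\lambda_0)$ (the latter because $\wt A_1^*\supset jAj$, so $j\wt A_1^*j\supset A$), hence so does $\mathcal S$. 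A short computation using density of $\dom A$ shows $\ker\mathcal S=\{0\}$, and then $\wt A:=\eps\mathcal S^{-1}+\lambda_0$ is the desired $j$-selfadjoint extension of $A$ satisfying \eqref{3+}. If you want to salvage your approach, you should replace the abstract fixed-point claim by this explicit resolvent-averaging construction, including the injectivity argument, which is not automatic.
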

       \begin{proof}
Setting $B:=j A j$ we note that $\{A,B\}$ is a dual pair of operators and due to \eqref{2.49} $\|B g-{\overline\lambda}_0 g\|\ge\eps\|g\|, \  g\in\dom B$. By Proposition \ref{propMMM}  there exists a proper extension ${\widetilde A}_1$ of $\{A,B\},$ obeying  \eqref{3+}.
We set ${\mathcal T}:=\eps({\widetilde A}_1-\lambda_0)^{-1}$ and  further
       \begin{equation}
{\mathcal S}:=({\mathcal T}+j {\mathcal T}^* j)/2=\eps[({\widetilde A}_1-\lambda_0)^{-1}+j({\widetilde A}^*_1-{\overline\lambda}_0)^{-1}j]/2
        \end{equation}
Clearly, ${\mathcal S}$ is contractive and
  \begin{equation}
{\mathcal S}^*=\eps[({\widetilde A}_1^*-\overline{\lambda_0})^{-1}+j({\widetilde A}_1-{\lambda}_0)^{-1}j]/2,
        \end{equation}
so $j{\mathcal S}^*j={\mathcal S}$ and ${\mathcal S}$ is $j$-selfadjoint. Moreover, as $\tA_1\in\Ext\{A,B\}$, we have $\tA_1\supset A$ and $\tA_1^*\supset jAj$. Thus $j\tA_1^*j\supset A$. Therefore, ${\mathcal S}$ is an extension of the non-densely defined contraction ${\mathcal T}_1=\eps(A-\lambda_0)^{-1}:\ran(A-\lambda_0)\to\gH$. Altogether we have
    \begin{equation}
{\mathcal T}_1\subset {\mathcal S}=j {\mathcal S}^* j, \qquad   \|{\mathcal S}\|\le 1.
     \end{equation}
Assume ${\mathcal S}f=0$ for some $f$ and let $g\in\dom A$. Then using that ${\mathcal S}$ extends ${\mathcal T}_1$ we obtain
\bea
0&=&({\mathcal S}f, j(A-\lambda_0)g)\ =\ (f, {\mathcal S}^*j(A-\lambda_0)g)\\
&=& (f, j{\mathcal S}(A-\lambda_0)g)\ =\  (f, \eps j g)\ = \ (g, \eps j f).
\eea
Density of $\dom A$ implies that $\eps j f=0$, so $f=0$ and ${\mathcal S}$ has trivial kernel.
As ${\mathcal S}$ is an injective contraction, it is invertible and we can set
${\widetilde A}=\eps {\mathcal S}^{-1}+\lambda_0$. Then
\bea
\tA^*&=& \eps ({\mathcal S}^{-1})^*+\overline{\lambda_0} \ =\ \eps (\mathcal S^*)^{-1} +\overline{\lambda_0}\\
&=& \eps j \mathcal S^{-1}j+ \overline{\lambda_0} \ =\ j (\eps \mathcal S^{-1}+ \lambda_0) j \ = \ j\widetilde{A} j
\eea
 and $\tA$ is $j$-selfadjoint.
Let $g\in\dom A$. Then
\be
g=\frac1\eps {\mathcal T}_1(A-\lambda_0)g = \frac1\eps {\mathcal S}(A-\lambda_0)g
\ee
and $g\in\dom {\mathcal S}^{-1}$ with $(\eps {\mathcal S}^{-1}+\lambda_0)g=Ag$, so $\tA$ extends $A$. It satisfies \eqref{3+}, as ${\mathcal S}$ is a contraction.
\end{proof}
\begin{corollary}[\cite{Zhi59}]\label{Neim_for-la_gener-n}
Let $A$ be $j$-symmetric operator in $\mathfrak H$ and $\widehat\rho(A) \not = \emptyset.$
Then:

(i) For any $\lambda_0\in\widehat\rho({A})$ there exists a $j$-selfadjoint extension
$\wt A$ of $A$ with $\lambda_0\in\rho(\wt{A}).$

(ii) For any $\lambda_0\in \rho(\wt{A})$ the following direct decompositions hold
       \begin{equation}\label{2.11_j-sym}
\dom jA^*j = \dom A \dot+({\widetilde A}-\lambda_0)^{-1}\gN_{{\overline\lambda}_0}(A)
\dot+ j\gN_{\overline\lambda_0}(A),
       \end{equation}
      \begin{equation}\label{2.12_j-sym}
\dom A^*=\dom jAj \dot+ ({\widetilde A}^* -
\overline{\lambda}_0)^{-1}j\gN_{{\overline\lambda}_0}(B) \dot+
\gN_{{\overline\lambda}_0}(A).
             \end{equation}
\end{corollary}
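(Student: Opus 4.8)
The plan is to reduce both statements to results already established for general dual pairs, applied to the particular pair $\{A,B\}$ with $B:=jAj$. Recall from Lemma \ref{lem2.2} that $j$-symmetry of $A$ is precisely the statement that $\{A,jAj\}$ is a dual pair of closed densely defined operators, and that then $B^*=jA^*j$. So everything will come from Proposition \ref{prop_gap_preserving} (for the existence of a gap-preserving $j$-selfadjoint extension) and from Lemma \ref{lemMalMog} (for the domain decompositions), after translating the data of the pair $\{A,B\}$ back into the language of $A$ and $j$ alone.

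For part (i) I would start from the hypothesis $\lambda_0\in\widehat\rho(A)$, which by definition supplies an $\varepsilon>0$ with $\|Af-\lambda_0 f\|\ge\varepsilon\|f\|$ for all $f\in\dom A$. This is exactly the hypothesis of Proposition \ref{prop_gap_preserving}, whose conclusion is the existence of a $j$-selfadjoint extension $\widetilde A$ of $A$ preserving the gap, so that $\mathbb{D}(\lambda_0;\varepsilon)\subset\rho(\widetilde A)$ and in particular $\lambda_0\in\rho(\widetilde A)$. This settles (i) at once.

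For part (ii), fix $\lambda_0\in\rho(\widetilde A)$. I would first verify that the hypotheses of Lemma \ref{lemMalMog} hold for $\{A,B\}$. Since $\lambda_0\in\rho(\widetilde A)\subset\widehat\rho(A)$, it remains to check $\overline\lambda_0\in\widehat\rho(B)$; this is the computation of \eqref{2.49}, giving for $g\in\dom B$ (so that $jg\in\dom A$) the bound $\|Bg-\overline\lambda_0 g\|=\|A(jg)-\lambda_0 jg\|\ge\varepsilon\|jg\|=\varepsilon\|g\|$. Hence $\lambda_0\in\widehat\rho\{A,B\}$ and $\widetilde A$ is a proper extension of the pair with $\lambda_0\in\rho(\widetilde A)$, so Lemma \ref{lemMalMog} applies and furnishes the decompositions \eqref{2.11} and \eqref{2.12} for $\{A,B\}$.

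It then remains only to rewrite these in terms of $A$ and $j$. Using $B=jAj$ and $B^*=jA^*j$ one has $\dom B=\dom(jAj)$ and $\dom B^*=\dom(jA^*j)$, while the conjugation identity established in the proof of Corollary \ref{cor2.25}, namely $j\gN_{\overline\lambda_0}(A)=\gN_{\lambda_0}(B)$, lets me replace the defect subspaces of $B$ by $j$-images of those of $A$ in the appropriate summands. Substituting these identities into \eqref{2.11} and \eqref{2.12} yields \eqref{2.11_j-sym} and \eqref{2.12_j-sym} respectively. The argument is essentially bookkeeping; the one point I would treat as the main (if minor) obstacle is the correct transport of the defect subspaces under the antilinear involution $j$ — verifying $j\gN_{\overline\lambda_0}(A)=\gN_{\lambda_0}(B)$ together with the attendant index switch $\lambda_0\leftrightarrow\overline\lambda_0$ — so that the conjugated subspaces land in precisely the right summands of each direct decomposition.
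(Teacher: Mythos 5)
Your proposal is correct and follows essentially the same route as the paper: part (i) is exactly the appeal to Proposition \ref{prop_gap_preserving}, and part (ii) is Lemma \ref{lemMalMog} applied to the dual pair $\{A,jAj\}$ together with the identity $j\gN_{\overline\lambda_0}(A)=\gN_{\lambda_0}(B)$ from the proof of Corollary \ref{cor2.25}. Your version is in fact slightly more careful than the paper's, which states the conjugation identity with a misprinted subscript.
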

\begin{proof}
(i) This statement is immediate from Proposition \ref{prop_gap_preserving}.

(ii) These formulas are implied by formulas \eqref{2.11} and \eqref{2.12}, respectively,
with $B = jAj$ if one takes the relation $j\gN_{\overline\lambda_0}(A)=
\gN_{\overline\lambda_0}(B)$ into account.
  \end{proof}
This result implies one more justification of  the existence of the defect number
$\text{Def}(A)$ for any $j$-symmetric operator $A$ (see Corollary \ref{cor2.25}).
   \begin{corollary}\label{cor_def_number}
Let $A$ be a $j$-symmetric operator in $\gH$ with a nonempty field of regularity
${\widehat\rho}(A)$ and let $B=jAj$, so that $\{A,B\}$ is a dual pair of densely defined operators in $\gH$. Then $\wh\rho (A)=\wh \rho \{A,B\}$ and
     \begin{equation}\label{2.20A}
\text{Def}\, (A)\  := \dim\gN_{\ov\lambda}(A)=\dim\gN_\l(B)\qquad \hbox{ for any}\quad \l\in\wh\rho \{A,B\}.
      \end{equation}
              \end{corollary}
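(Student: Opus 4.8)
The plan is to prove Corollary \ref{cor_def_number} by showing the two claimed facts separately: first the equality of fields of regularity $\wh\rho(A) = \wh\rho\{A,B\}$ for the dual pair $\{A, jAj\}$, and then the equality of defect numbers in \eqref{2.20A}. Both will follow quickly from the symmetry relation \eqref{2.49} already established in the proof of Corollary \ref{cor2.25}, together with the preceding lemmas on dual pairs.

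For the first claim, I would argue as follows. By definition $\wh\rho\{A,B\} = \{\l : \l\in\wh\rho(A)\text{ and }\ov\l\in\wh\rho(B)\}$, so $\wh\rho\{A,B\}\subseteq\wh\rho(A)$ is immediate. For the reverse inclusion, suppose $\l_0\in\wh\rho(A)$, so that $\|Af - \l_0 f\|\ge\eps\|f\|$ for all $f\in\dom A$ and some $\eps>0$. Setting $g = jf$ and using $B = jAj$ together with the norm-preserving identity from \eqref{2.49}, namely
\beq
\|Bg - \ov\l_0 g\| = \|j(Af - \l_0 f)\| = \|Af - \l_0 f\| \ge \eps\|f\| = \eps\|g\|,
\eeq
and noting that $j$ maps $\dom A$ bijectively onto $\dom B = j\dom A$, I conclude $\ov\l_0\in\wh\rho(B)$. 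Hence $\l_0\in\wh\rho\{A,B\}$, giving $\wh\rho(A)\subseteq\wh\rho\{A,B\}$ and therefore equality.

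For the second claim, the defect numbers $n(A)=\dim\gN_{\ov\l}(A)$ and $n(B)=\dim\gN_\l(B)$ of the dual pair are well-defined and independent of $\l$ on the relevant sets by Corollary \ref{cor2.11} and Definition \ref{def2.3}. The key computation is exactly the one carried out in the proof of Corollary \ref{cor2.25}: for $f\in\ker(A^*-\ov\l_0)$, setting $g=jf$ and using $B^* = jA^*j$ gives $B^*g = jA^*f = j\ov\l_0 f = \l_0 jf = \l_0 g$, so $g\in\ker(B^*-\l_0)$. Since $j$ is an antilinear involution, it restricts to an antilinear bijection $j\colon\gN_{\ov\l_0}(A)\to\gN_{\l_0}(B)$, whence $\dim\gN_{\ov\l_0}(A) = \dim\gN_{\l_0}(B)$. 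This is precisely the equality \eqref{2.20A}.

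I expect no serious obstacle here, since this corollary is essentially a restatement and mild extension of Corollary \ref{cor2.25}; the only point requiring a little care is ensuring the field-of-regularity estimates transfer under $j$ with the same $\eps$ (which is exactly what \eqref{2.49} provides, $j$ being isometric because $(ju,jv)=(v,u)$) and that the correspondence $f\mapsto jf$ is genuinely a bijection between the defect subspaces rather than merely an injection — but surjectivity is immediate from $j^2=I$ applied symmetrically with the roles of $A$ and $B$ interchanged. One should also remark that the existence of a point in $\wh\rho\{A,B\}$ with a proper extension having nonempty resolvent set, needed to invoke Corollary \ref{cor2.11}, is guaranteed by Proposition \ref{prop_gap_preserving} (or Corollary \ref{Neim_for-la_gener-n}(i)), so the constancy of the defect numbers is justified.
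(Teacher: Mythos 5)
Your two structural claims are handled correctly, but by a different mechanism than the paper's. The paper's proof of this corollary does not argue via the antilinear bijection $j\colon\gN_{\ov\l_0}(A)\to\gN_{\l_0}(B)$ at all: it takes, for each $\l_0\in\wh\rho(A)$, the gap-preserving $j$-selfadjoint extension from Proposition \ref{prop_gap_preserving} and reads off from the decomposition \eqref{2.11_j-sym} that $\dim(\dom jA^*j/\dom A)=2\dim\gN_{\ov\l_0}(A)$ (formula \eqref{2.55}); since the left-hand side is one fixed number, the constancy in $\l_0$ and the value of $\text{Def}(A)$ fall out simultaneously. Your route --- isometry of $j$ via \eqref{2.49} for $\wh\rho(A)=\wh\rho\{A,B\}$, and the kernel bijection $f\mapsto jf$ for $\dim\gN_{\ov\l}(A)=\dim\gN_\l(B)$ --- is essentially the argument of Corollary \ref{cor2.25}, and it is sound for those two equalities at each fixed $\l$.

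The soft spot is the constancy of $\dim\gN_{\ov\l}(A)$ over all of $\wh\rho\{A,B\}$, which is the real content behind introducing the notation $\text{Def}(A)$. Corollary \ref{cor2.11} gives constancy only on $\rho(\wt A)$ for a single fixed extension $\wt A$; different points of $\wh\rho\{A,B\}$ may require different extensions (compare the two half-planes for a symmetric operator with unequal deficiency indices), and the remark following Definition \ref{def2.3} asserting independence of the choice of $\wt A$ is not available to you as a proved statement --- for a general dual pair only the \emph{sum} $\dim\gN_{\ov\l}(A)+\dim\gN_\l(B)$ is shown to be extension-independent (Proposition \ref{prop2.3}(i), formula \eqref{2.13}). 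The repair is already in your hands: having established $\dim\gN_{\ov\l}(A)=\dim\gN_\l(B)$ at each individual $\l$, combine this with \eqref{2.13}, which gives $\dim\gN_{\ov\l}(A)+\dim\gN_\l(B)=n_{\pm}(S)$ whenever $\l$ lies in $\rho(\wt A)$ for some proper extension (supplied for every $\l\in\wh\rho\{A,B\}$ by Proposition \ref{propMMM} or Proposition \ref{prop_gap_preserving}); hence $\dim\gN_{\ov\l}(A)=n_{\pm}(S)/2$ for every $\l\in\wh\rho\{A,B\}$, manifestly independent of $\l$. This is precisely Corollary \ref{cor2.25}, so the cleanest fix is to cite \eqref{2.20} for the constancy rather than Corollary \ref{cor2.11} and Definition \ref{def2.3}.
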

\begin{proof}
By Proposition   \ref{prop_gap_preserving},   for any $\lambda_0 \in \widehat\rho(A)$
there exists  a   $j$-selfadjoint extension  $\wt{A}$ satisfying  $\lambda_0\in
\rho(\wt{A})$. It follows from  \eqref{2.11_j-sym} that
   \begin{equation}\label{2.55}
\dim (\dom jA^*j/ \dom A)  =   2\dim \gN_{{\overline\lambda}_0}(A)\quad \text{for
any}\quad \lambda_0\in{\widehat\rho}(A).
  \end{equation}
Hence $\dim\gN_{{\lambda}_0}(A)$  does not depend on $\lambda_0\in{\widehat\rho}(A)$. This
proves \eqref{2.20A}.
\end{proof}
\begin{remark}
The existence of a $j$-selfadjoint extension of a $j$-symmetric operator has been
well-known for a long time. It was proved by Zhikhar \cite[Theorems 2,3]{Zhi59}  for the
first time using the Vishik method \cite{Vis52}, see e.g. \cite[Section 22]{Gla63}
and~\cite[Theorem III.5.8 \& Theorem III.5.9]{EE89}. However, the preservation of the
gap (in the sense of \eqref{3+}) seems to be new. It can be considered as a
generalization of well known  result   of  Krein  \cite{Kre47} which ensures  existence of a
selfadjoint extension $\wt A$ preserving a gap of a symmetric operator $A$ with a gap.

It also complements a result by Brasche et al in \cite[Theorem 3.1]{BNW93}, which states
that there need not be a selfadjoint extension of a symmetric operator that commutes
with $j$.

Note also that Lemma \ref{lemMalMog} and  Corollary \ref{Neim_for-la_gener-n} generalize
Vishik's results from \cite{Vis52}.
   \end{remark}

\section{ Non-selfadjoint Dirac type operators} \label{Dirac}
\subsection{Dirac type expressions and relative operators}
We now consider applications of the previous theory to Dirac type operators, both on the semiaxis and the whole line.

In the  following  $\gH$, $\cH$ are Hilbert spaces, $\B(\gH_1,\gH_2)$ is the set of
bounded operators defined on $\gH_1$ with values in $\gH_2$ and $\B(\gH):=\B(\gH,\gH)$.
An operator $T\in \B(\C^m)$ will often be identified with its matrix
$T=(T_{ij})_{i,j=1}^m$ in the standard basis of $\C^m$.

Moreover, $AC_{\loc}(\cI,\C^m)$ is the set of all
$\C^m$-valued vector-functions, defined on an interval $\cI\subset\R$ and absolutely
continuous on each compact subinterval $[\alpha, \beta]\subset \cI$ , $L^2(\cI,\C^m)$ is
the Hilbert space of all Borel measurable functions $f:\cI\to\C^m$ with
$$\int\limits_{\cI}|f(t)|^2\, dt < \infty,\quad \hbox{ where } |f(t)|^2=\sum\limits_{j=1}^m |f_j(t)|^2.$$
$L^1_{\rm loc}(\cI, \B (\C^m))$ is the set of all operator-functions  $F:\cI\to \B (\C^m)$ integrable on each compact interval $[a,b]\subset \cI$, $L^2(\cI, \B (\C^{m_1}, \C^{m_2} ))$ is the set of all operator-functions
$F:\cI\to \B (\C^{m_1}, \C^{m_2} ))$ such that $\int\limits_{\cI}|F(t)|^2\, dt < \infty$
or, equivalently, $F(\cd)h\in L^2(\cI, \C^{m_2})$ for all $h\in \C^{m_1}$. Moreover, for
$\alpha\in\R$ we denote by $\C_{\alpha,+}$ and  $\C_{\a,-}$ the  open half-planes
$\C_{\a,+}=\{z\in\C: \im z> \a\}$ and $\C_{\a,-}=\{z\in\C: \im z < \a\}$, respectively.

Let $\cI=\langle a,b\rangle, \; -\infty\leq a<b \leq\infty,$ be an interval in $\R$ (each of the endpoints $a $ or $b$ may belong or not belong to $\cI$). We recall that we are considering the Dirac type differential expression
         \begin{equation}\label{3.1}
D(Q)y:= J_{n} \frac{dy}{dx} + Q(x)y
        \end{equation}
on $\cI$ with the operator $J_n \in\B (\C^n)$ such that  $J_n^*=J_n^{-1}=-J_n$ and the
operator (matrix) potential $Q(x)$. In the following we suppose (unless otherwise
stated) that $Q\in L^1_{\rm loc}(\cI, \B (\C^n))$. Expression \eqref{3.1} is called
formally selfadjoint if $Q(x)=Q^*(x)$ (a.e. on $\cI$).

Let $\gH:=L^2(\cI,{\C}^{n})$ and let $\Dma(Q)$ be the maximal operator in $\gH$ defined by
     \begin{gather*}
\dom \Dma (Q)= \{y\in\gH: y\in {AC_{\loc}}(\cI,\C^n), D(Q) y\in\gH\}\\
\Dma (Q) y=D(Q) y,\;\; y\in\dom\Dma (Q).
     \end{gather*}
Moreover, define the preminimal operator $\Dmi'(Q):=\Dma (Q)\up \dom \Dmi'(Q)$, where $\dom \Dmi'(Q)$ is the set of all $y\in \dom \Dma (Q)$  such that $\supp y $ is compact and $y(a)=0 \,(y(b)=0)$ if $ a\in\cI$ (resp. $ b\in\cI$). It follows by standard techniques (see e.g.~\cite{LS91}) that $\Dmi'(Q)$ is a densely defined closable operator in $\gH$. The minimal operator $\Dmi (Q)$ is then defined as the closure  of $\Dmi' (Q)$ and  satisfies $(\Dmi(Q))^*=\Dma(Q^*)$. Since obviously $\Dmi(Q)\subset \Dma(Q)=(\Dmi (Q^*))^*$, it follows that $\{\Dmi(Q),\Dmi
(Q^*)\}$ forms  a dual pair of closed densely defined operators in $\gH$.

\subsection{ Dirac type operators on  the whole line}
We now prove our first main result, Theorem \ref{th1.1_dom-max=dom-min}.
\begin{proof}[Proof of Theorem 1.1]
Let $\wt J_{2n}\in \B (\C^{2n})$ and $\wt Q(x)(\in \B (\C^{2n}))$ be given by
       \begin{equation}\label{3.1.2}
\wt J_{2n}=\begin{pmatrix} 0 & J_n\\ J_n & 0 \end{pmatrix}, \qquad \wt Q(x)=\begin{pmatrix} 0 & Q(x)\\  Q^*(x) & 0 \end{pmatrix}
       \end{equation}
Since obviously $\wt J_{2n}^*=\wt J_{2n}^{-1}=-\wt J_{2n}$ and $\wt Q^*(x)=\wt Q(x)$,
the equality
     \begin{equation}\label{3.1.3}
D (\wt  Q)\wt y=\wt J_{2n} \frac{d \wt y}{dx}+\wt Q(x) \wt y
       \end{equation}
defines a formally  selfadjoint Dirac  type expression $D(\wt Q)\wt y$ on $\R$ and therefore according to \cite[Theorem 3.2]{LM03} $\Dmi (\wt Q)=(\Dmi (\wt Q))^*(=\Dma (\wt Q))$. On the other hand, one can easily verify that
     \begin{equation*}
\Dmi (\wt Q)= \begin{pmatrix} 0 & \Dmi(Q)\\  \Dmi(Q^*) & 0 \end{pmatrix}.
     \end{equation*}
Hence by Proposition \ref{prop2.2}, (ii) $\Dmi(Q)=(\Dmi (Q^*))^*=\Dma (Q)$, which proves \eqref{Intro1.4_Main_result}.
\end{proof}
In the following we denote by $j_n$ the complex conjugation in $\C^n$, i.e.
     \begin{equation*}
j_n h=\ov h:=\{\ov h_1, \ov h_2, \dots, \ov h_n\}, \qquad h=\{h_1,h_2, \dots, h_n \}\in\C^n
     \end{equation*}
Let $n=2p$,      $U=\begin{pmatrix} 0 & I_p\cr I_p & 0 \end{pmatrix}$ and let
$\cI=\langle a,b\rangle$ be an interval in $\R$. Then the equalities
     \begin{equation}\label{3.1.4}
\wt j_n:=U j_n(=j_n U) \quad \text{and}\quad (jf)(t):=\wt j_n f(t), \;\; t\in\cI, \qquad
f\in L^2(\cI,\C^n)
     \end{equation}
define conjugations $\wt j_n$ and $j$ in $\C^n$ and $L^2(\cI,\C^n)$ respectively.

In what follows  $C^\top$ denotes  the matrix  transpose to the matrix $C$.
\begin{proposition}\label{pr3.0.1}
Let $n=2p$, let $\cI=\langle a,b\rangle$ be an interval in $\R$ and let $D(Q)$ be   the  Dirac
type expression \eqref{3.1} on $\cI$ with $Q\in L^1_{\rm loc}(\cI, \B (\C^n))$ and
     \begin{equation}\label{3.1.5}
J_{n}=i \begin{pmatrix}I_p&0\\0&-I_p \end{pmatrix}:\C^p \oplus\C^p\to\C^p
\oplus\C^p,\quad Q(x)=
\begin{pmatrix}Q_{11}(x)&Q_{12}(x)\\Q_{21}(x)&Q_{22}(x)\end{pmatrix}:\C^p
\oplus\C^p\to\C^p \oplus\C^p.
     \end{equation}
Assume also that $j$ is a conjugation in $\gH=L^2(\cI,\C^n)$ given by \eqref{3.1.4}. If
     \begin{equation}\label{3.1.6}
Q^\top_{11}(x)=Q_{22}(x),\quad  Q^\top_{12}(x)=Q_{12}(x),\quad  Q^\top_{21}(x)=Q_{21}(x) \quad (\text{a.e. on }\;\;\cI),
     \end{equation}
then the operator $\Dmi (Q)$ is $j$-symmetric and
    \begin{equation}\label{3.1.7}
j \Dma (Q) j = \Dma (Q^*), \qquad j \Dmi (Q) j = \Dmi (Q^*).
    \end{equation}
Here the operator $Q_{ij}(x)(\in \B(\C^p))$ is identified with its matrix in the
standard basis of $\C^p$.

Conversely, if $Q\in L^2_{\rm loc}(\cI, \B(\C^n))$ and  $\Dmi (Q)$ is $j$-symmetric,
then \eqref{3.1.6} holds.
  \end{proposition}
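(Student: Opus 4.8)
The plan is to verify the $j$-symmetry and the intertwining relations $\eqref{3.1.7}$ directly from the definition of $j$, by computing how the conjugation $j$ acts on the differential expression $D(Q)$, and then to extract the converse by a density/localization argument. The key observation is that $j$ acts as $(jf)(t) = \widetilde j_n f(t)$ where $\widetilde j_n = U j_n$, with $U$ the flip $\begin{pmatrix} 0 & I_p \\ I_p & 0\end{pmatrix}$ and $j_n$ entrywise complex conjugation. Since $j$ is a \emph{local} operator (it acts pointwise and commutes with differentiation in the sense $j \frac{d}{dx} = \frac{d}{dx} j$, because constant-coefficient conjugation passes through $\frac{d}{dx}$), the whole problem reduces to a finite-dimensional matrix computation at each point $x$.

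First I would compute $\widetilde j_n J_n \widetilde j_n$ and $\widetilde j_n Q(x) \widetilde j_n$. For the derivative term, using $j_n \frac{d}{dx} = \frac{d}{dx} j_n$ and $J_n = i\,\mathrm{diag}(I_p, -I_p)$, one checks that $\widetilde j_n J_n \widetilde j_n = J_n$: indeed $j_n$ conjugates the scalar $i$ to $-i$ (flipping the sign of $J_n$), while $U$ swaps the two diagonal blocks (introducing a second sign flip), so the two effects cancel. Thus the first-order part is preserved under conjugation. For the potential term, I would write $\widetilde j_n Q(x) \widetilde j_n = U\, \overline{Q(x)}\, U$ (since $j_n$ applied on both sides takes entrywise complex conjugates), and then verify that the block structure of $U\overline Q U$ together with conditions $\eqref{3.1.6}$ yields exactly $Q^*(x)$. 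Explicitly, $U \overline Q U$ swaps the diagonal blocks and the off-diagonal blocks while conjugating entries, and the transpose-symmetry relations $Q_{11}^\top = Q_{22}$, $Q_{12}^\top = Q_{12}$, $Q_{21}^\top = Q_{21}$ are precisely what is needed so that $U\overline Q U = \overline{Q}\,{}^\top = Q^*$ at each $x$. This establishes $j D(Q) j = D(Q^*)$ as differential expressions, from which the operator identities $\eqref{3.1.7}$ follow: $j$ maps $\dom \Dma(Q)$ bijectively onto $\dom \Dma(Q^*)$ (as $j$ preserves $AC_{\loc}$ and the $L^2$ condition) and maps compactly supported functions to compactly supported functions, hence respects the minimal-operator closure as well. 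The $j$-symmetry of $\Dmi(Q)$ then follows from $j\Dmi(Q)j = \Dmi(Q^*) \subseteq (\Dmi(Q))^* = \Dma(Q^*)$, which is exactly the inclusion $jAj \subseteq A^*$ defining $j$-symmetry, using the dual-pair relation $(\Dmi(Q))^* = \Dma(Q^*)$ recorded at the end of Section~3.1.

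For the converse, I would assume $Q \in L^2_{\loc}$ and that $\Dmi(Q)$ is $j$-symmetric, i.e. $j\Dmi(Q)j \subseteq \Dma(Q^*)$, and run the matrix computation backwards. Applying $j$-symmetry to compactly supported smooth test functions and using that $j D(Q) j$ acts as the expression with coefficient matrix $\widetilde j_n J_n \widetilde j_n \frac{d}{dx} + \widetilde j_n Q(x) \widetilde j_n = J_n \frac{d}{dx} + U \overline{Q(x)} U$, the containment in $\Dma(Q^*)$ forces $U\overline{Q(x)} U = Q^*(x)$ for a.e.\ $x$; unpacking this block identity reproduces $\eqref{3.1.6}$. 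The role of the extra hypothesis $Q \in L^2_{\loc}$ is to guarantee that the pointwise coefficient matrices can be recovered from the action on a dense set of test functions (a.e.\ identification of two $L^2_{\loc}$ matrix functions agreeing as multiplication operators), so that the two differential expressions coincide as coefficient matrices rather than merely on some functions.

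\emph{The main obstacle} I anticipate is the bookkeeping in the block computation $\widetilde j_n Q \widetilde j_n = U \overline Q U$ and confirming that this equals $Q^* = \overline{Q}^\top$ precisely under $\eqref{3.1.6}$ — one must carefully track that $U$-conjugation swaps blocks as $\begin{pmatrix} Q_{22} & Q_{21} \\ Q_{12} & Q_{11}\end{pmatrix}$ before conjugating entries, and then match this against $Q^* = \begin{pmatrix} \overline{Q_{11}}^\top & \overline{Q_{21}}^\top \\ \overline{Q_{12}}^\top & \overline{Q_{22}}^\top\end{pmatrix}$, so the symmetry conditions enter block-by-block. The secondary subtlety is the converse direction, where I must argue that $j$-symmetry on the minimal domain (rather than full equality) already pins down the coefficients; here the point is that $\Dmi(Q)$ contains all smooth compactly supported functions, which is a large enough test class that the multiplication-operator identity holds a.e., and this is exactly where the $L^2_{\loc}$ assumption is used.
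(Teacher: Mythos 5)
Your proposal is correct and follows essentially the same route as the paper: the core of both arguments is the pointwise matrix computation $\wt j_n J_n \wt j_n = J_n$ and $\wt j_n Q(x)\wt j_n = U\,\overline{Q(x)}\,U$, the observation that \eqref{3.1.6} is equivalent to $\wt j_n Q(x)\wt j_n = Q^*(x)$ a.e., and for the converse the use of compactly supported test functions (made admissible by $Q\in L^2_{\rm loc}$) to recover the coefficient identity a.e. The only cosmetic difference is that the paper obtains $j\Dmi(Q)j=\Dmi(Q^*)$ by taking adjoints of the maximal-operator identity, whereas you argue directly that $j$ respects the closure of the preminimal operator; both are valid.
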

  \begin{proof}
Since $j_n=j_p\oplus j_p$, it follows that
     \begin{equation*}
\wt j_n Q(x) \wt j_n =j_n U Q(x)U j_n=
\begin{pmatrix} j_p Q_{22}(x) j_p &j_p Q_{21}(x) j_p \cr j_p Q_{12}(x) j_p &j_p Q_{11}(x) j_p \end{pmatrix}
=\begin{pmatrix} \ov{Q_{22}(x)}& \ov{Q_{21}(x)}\cr \ov{Q_{12}(x)}& \ov{Q_{11}(x)} \end{pmatrix}.
     \end{equation*}
Therefore \eqref{3.1.6} is equivalent to
    \begin{equation}\label{3.1.8}
\wt j_n Q(x) \wt j_n = Q^*(x)  \;\; (\text{\rm a.e. on } \cI).
    \end{equation}
Moreover, one easily  checks  that
    \begin{equation}\label{3.1.9}
\wt j_n J_n \wt j_n =J_n.
    \end{equation}

Let \eqref{3.1.6} be satisfied. Then \eqref{3.1.8} and \eqref{3.1.9} hold, which implies the first equality in \eqref{3.1.7}. By using this equality one gets
     \begin{equation*}
\Dmi(Q)=(\Dma (Q^*))^*=(j\Dma (Q) j)^*=j(\Dma (Q))^* j=j\Dmi (Q^*)j.
     \end{equation*}
This proves the second equality in \eqref{3.1.7}. Moreover, since $\Dmi(Q^*)\subset \Dma(Q^*)=(\Dmi (Q))^*$, it follows from the second equality in \eqref{3.1.7} that $\Dmi(Q) $ is $j$-symmetric.

Conversely, let $Q\in L^2_{\rm loc}(\cI, \B(\C^n))$ and  let the operator $\Dmi (Q)$ be $j$-symmetric. It is easily seen that  for each compact interval $[\a,\b]\subset \cI$ and for each $h\in\C^n$ there exists a function $y\in\dom  \Dmi' (Q)$ such that $y(x)=\wt j_n h, \; x\in [\a,\b]$. Clearly, a function $y_0=j y$ satisfies $j y_0\in \dom\Dmi (Q), \; y_0(x)=h, \;x\in [\a,\b],$ and for this function   
     \begin{equation*}
(j \Dmi(Q) j y_0)(x)=\wt j_n Q(x)\wt j_n h, \qquad (\Dma(Q^*)y_0)(x)= Q^*(x) h  \;\;\; (\text {a.e. on}\;\; [\a,\b]).
     \end{equation*}
This and the inclusion $j\Dmi(Q)j \subset \Dma (Q^*)$ give \eqref{3.1.8}, which in turn yields \eqref{3.1.6}.
\end{proof}

Now we are in position  to state a  criterion for the Dirac type operator $\Dmi(Q)$ to
be $j$-selfadjoint on the line $\R$.
%
%
  \begin{theorem}\label{th3.0.2}
Let $n=2p$,   $D(Q)$ be  the   Dirac type expression \eqref{3.1} on $\R$ with $J_n$ and $Q(\cd)\in L^1_{\rm loc}(\R, \B(\C^n))$ given by \eqref{3.1.5} and let  $j$ be the conjugation in $L^2(\R_+,\C^n)$ given by \eqref{3.1.4}. If  the  relations \eqref{3.1.6} are fulfilled, then the operator $\Dmi(Q)(=\Dma(Q))$ is $j$-selfadjoint. Conversely, if  $Q\in L^2_{\rm loc}(\R, \B(\C^n))$ and  $\Dmi (Q)$ is $j$-selfadjoint,
then \eqref{3.1.6} holds.
\end{theorem}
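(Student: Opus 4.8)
The plan is to combine the $j$-selfadjointness criterion for abstract operators with the structural result of Proposition~\ref{pr3.0.1}. The key observation is that Theorem~\ref{th1.1_dom-max=dom-min} already gives us $\Dmi(Q)=\Dma(Q)$ on the whole line, and the hypotheses \eqref{3.1.6} together with Proposition~\ref{pr3.0.1} give us two facts: first, that $\Dmi(Q)$ is $j$-symmetric, and second, the crucial identity $j\Dma(Q)j=\Dma(Q^*)$ in \eqref{3.1.7}.

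First I would establish the forward direction. Recall that by definition $\Dmi(Q)$ is $j$-selfadjoint if and only if $j\Dmi(Q)^*j=\Dmi(Q)$, equivalently $\Dmi(Q)=j\Dma(Q^*)j$ since $\Dmi(Q)^*=\Dma(Q^*)$. Now by Theorem~\ref{th1.1_dom-max=dom-min} we have $\Dmi(Q)=\Dma(Q)$, so it suffices to check $\Dma(Q)=j\Dma(Q^*)j$. Applying $j$ on both sides of the first equality in \eqref{3.1.7}, namely $j\Dma(Q)j=\Dma(Q^*)$, and using $j^2=I$, yields precisely $j\Dma(Q^*)j=\Dma(Q)$. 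This chain of equalities gives $j\Dmi(Q)^*j=j\Dma(Q^*)j=\Dma(Q)=\Dmi(Q)$, which is exactly $j$-selfadjointness. So the forward direction is essentially a short bookkeeping argument stringing together Theorem~\ref{th1.1_dom-max=dom-min} and the intertwining relation \eqref{3.1.7}.

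For the converse, I would assume $Q\in L^2_{\rm loc}(\R,\B(\C^n))$ and that $\Dmi(Q)$ is $j$-selfadjoint. Since $j$-selfadjointness implies $j$-symmetry, the operator $\Dmi(Q)$ is in particular $j$-symmetric. The converse part of Proposition~\ref{pr3.0.1} then applies directly: under the hypothesis $Q\in L^2_{\rm loc}$, $j$-symmetry of $\Dmi(Q)$ already forces the structural relations \eqref{3.1.6}. Thus the converse is immediate from Proposition~\ref{pr3.0.1} and requires no additional work beyond noting that $j$-selfadjoint operators are $j$-symmetric.

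The main subtlety—which is not really an obstacle here but the conceptual heart of the argument—is that the passage from $j$-symmetry to $j$-selfadjointness on the whole line is powered entirely by the coincidence $\Dmi(Q)=\Dma(Q)$ from Theorem~\ref{th1.1_dom-max=dom-min}; without the limit-point-type behavior on all of $\R$ the $j$-symmetric operator could be properly contained in its $j$-adjoint. In other words, the deficiency-index machinery is hidden inside Theorem~\ref{th1.1_dom-max=dom-min}, and once that is invoked the final theorem reduces to elementary manipulations with the conjugation $j$ and the adjoint relations. I would therefore present the proof as a direct consequence of Theorem~\ref{th1.1_dom-max=dom-min} and Proposition~\ref{pr3.0.1}, being careful only to track the equality $\Dmi(Q)^*=\Dma(Q^*)$ correctly throughout.
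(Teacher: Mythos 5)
Your proposal is correct and follows essentially the same route as the paper's proof: both combine Theorem~\ref{th1.1_dom-max=dom-min} with the intertwining relations \eqref{3.1.7} from Proposition~\ref{pr3.0.1}, the only cosmetic difference being that the paper uses the second equality in \eqref{3.1.7} together with $(\Dmi(Q))^*=\Dma(Q^*)=\Dmi(Q^*)$, while you conjugate the first equality by $j$ --- a trivially equivalent manipulation. The converse is in both cases immediate from the converse part of Proposition~\ref{pr3.0.1}.
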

   \begin{proof}
By Proposition \ref{pr3.0.1}, conditions  \eqref{3.1.6}   imply relations \eqref{3.1.7}.
On the other hand,  Theorem \ref{th1.1_dom-max=dom-min} ensures the equality  $(\Dmi(Q))^*=\Dma
(Q^*)=\Dmi(Q^*)$. Combining this relation  with the second one  in \eqref{3.1.7}
 yields
$$
j \Dmi (Q) j = \Dmi (Q^*) = (\Dmi(Q))^*.
$$
This proves the result.
  \end{proof}
The following result  is immediate from Theorems \ref{th1.1_dom-max=dom-min} and \ref{th3.0.2}.
  \begin{corollary}\label{cor3.0.4}
In addition to the  assumptions of Theorem \ref{th3.0.2}  let
    \begin{equation}\label{3.1.10}
Q(x)=i\begin{pmatrix}0& -q(x)\\-q^*(x )&0\end{pmatrix},
    \end{equation}
where $q\in L_{\rm loc}^1(\R,\C^{p\times p})$ and $q(x)=q^\top (x)$ (a.e. on $\R$). Then
$\Dmi(Q)=\Dma (Q)$ and the operator $\Dmi (Q)$ is $j$-selfadjoint.
\end{corollary}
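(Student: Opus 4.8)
The plan is to verify that the specific potential matrix in \eqref{3.1.10} is precisely a special case of the structured potential appearing in Proposition \ref{pr3.0.1} and Theorem \ref{th3.0.2}, so that the conclusion follows immediately by checking the symmetry conditions \eqref{3.1.6}. First I would write the potential \eqref{3.1.10} in the block form used in \eqref{3.1.5}, namely identify the blocks
\[
Q_{11}(x)=0,\quad Q_{12}(x)=-i\,q(x),\quad Q_{21}(x)=-i\,q^*(x),\quad Q_{22}(x)=0.
\]
Then I would simply check that each of the three relations in \eqref{3.1.6} holds for these blocks. The diagonal condition $Q_{11}^\top=Q_{22}$ is trivial since both blocks vanish. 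The off-diagonal conditions are where the hypothesis $q=q^\top$ enters.

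The key computation is the following. For the block $Q_{12}=-iq$, the symmetry requirement $Q_{12}^\top=Q_{12}$ reads $-i\,q^\top(x)=-i\,q(x)$, i.e.\ $q^\top(x)=q(x)$, which holds by assumption. For the block $Q_{21}=-iq^*$, the condition $Q_{21}^\top=Q_{21}$ reads $-i\,(q^*)^\top(x)=-i\,q^*(x)$, i.e.\ $(q^*)^\top(x)=q^*(x)$. Since $q^*=\overline{q^\top}=\overline{q}$ (using $q=q^\top$), we have $(q^*)^\top=\overline{q}^{\,\top}=\overline{q^\top}=\overline{q}=q^*$, so this condition is also satisfied. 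Thus all three relations in \eqref{3.1.6} hold, and the structural hypothesis of Theorem \ref{th3.0.2} is met.

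Having verified \eqref{3.1.6}, I would invoke the two cited theorems directly. Theorem \ref{th1.1_dom-max=dom-min} gives the equality $\Dmi(Q)=\Dma(Q)$ on the whole line unconditionally (for any $Q\in L^1_{\loc}$), which establishes the first assertion. For the $j$-selfadjointness, since \eqref{3.1.6} holds and the matrix $J_n$ in \eqref{3.1.10} has exactly the form \eqref{3.1.5}, Theorem \ref{th3.0.2} applies verbatim and yields that $\Dmi(Q)$ is $j$-selfadjoint with respect to the conjugation $j$ from \eqref{3.1.4}.

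I do not expect any genuine obstacle here, since this corollary is explicitly flagged as \emph{immediate} from Theorems \ref{th1.1_dom-max=dom-min} and \ref{th3.0.2}; the only real content is the algebraic check that the particular potential \eqref{3.1.10} satisfies the transposition identities \eqref{3.1.6}. The one point requiring a moment of care is the verification for the $Q_{21}$ block, where one must correctly combine the defining relation $q=q^\top$ with the definition of the Hermitian adjoint $q^*$ to confirm $(q^*)^\top=q^*$; this is the only spot where a careless sign or conjugation could cause trouble, but it resolves cleanly as shown above.
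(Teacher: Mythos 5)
Your proposal is correct and is exactly the argument the paper intends: the paper states the corollary is immediate from Theorems \ref{th1.1_dom-max=dom-min} and \ref{th3.0.2}, and the only content is the block identification $Q_{11}=Q_{22}=0$, $Q_{12}=-iq$, $Q_{21}=-iq^*$ together with the verification of \eqref{3.1.6}, which you carry out correctly (including the $(q^*)^\top=q^*$ step via $q=q^\top$).
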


 \begin{remark}\label{rem3.0.5}
Corollary \ref{cor3.0.4} was proved by another method in \cite[Theorem 3.5]{CGHL04} in
the scalar case ($p=1$) and in \cite[Theorem 4]{CG04} for general $p$. It was also
generalized for a wider class of off-diagonal potential matrices $Q$ in  \cite{Kla10}. We
emphasize however that our method of reduction of the problem to the self-adjoint case is
apparently applied to such problems here for the first time.
      \end{remark}
\subsection { Dirac type operators on the half-line}
In this subsection we assume that the  Dirac type expression \eqref{3.1} is defined on the
half-line $\R_+=[0,\infty)$. The corresponding  minimal and maximal operators in
$\gH:= L^2(\R_+,\C^n)$ will be   denoted  by $\Dmi (Q)=\Dmi^+(Q)$ and
$\Dma(Q)=\Dma^+(Q)$.
    \begin{proposition}\label{pr3.1}
For  expression \eqref{3.1} on $\R_+$  the    (Lagrange) identity
       \begin{equation}\label{3.2}
(\Dma^+ (Q)y,z)_\gH - (y, \Dma^+ (Q^*)z)_\gH=-(J_n y(0), z(0))_{\C^n}
       \end{equation}
holds for every $y\in\dom \Dma^+(Q)$ and $z\in\dom \Dma^+(Q^*)$.
\end{proposition}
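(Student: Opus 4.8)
The plan is to prove the Lagrange identity \eqref{3.2} by a direct integration-by-parts argument, exploiting the structure of the Dirac-type expression. First I would take $y \in \dom \Dma^+(Q)$ and $z \in \dom \Dma^+(Q^*)$, so that both are locally absolutely continuous with $D(Q)y \in \gH$ and $D(Q^*)z \in \gH$. The key computation is to examine the pointwise pairing. Writing out $(D(Q)y, z)_{\C^n}$ pointwise, we have $(J_n y' + Qy, z)_{\C^n}$, and we want to compare this against $(y, D(Q^*)z)_{\C^n} = (y, J_n z' + Q^* z)_{\C^n}$. Using $J_n^* = -J_n$ and the definition of $Q^*$, the potential terms cancel: $(Qy, z)_{\C^n} = (y, Q^* z)_{\C^n}$. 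This reduces the problem to the first-order differential part.

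Next I would focus on the derivative contribution. The crucial identity is
\begin{equation*}
(J_n y', z)_{\C^n} - (y, J_n z')_{\C^n} = (J_n y', z)_{\C^n} + (J_n y, z')_{\C^n} = \frac{d}{dx}(J_n y, z)_{\C^n},
\end{equation*}
where I used $(y, J_n z')_{\C^n} = (J_n^* y, z')_{\C^n} = -(J_n y, z')_{\C^n}$. Thus the integrand is an exact derivative. Integrating over $\R_+ = [0,\infty)$ gives
\begin{equation*}
(\Dma^+(Q)y, z)_\gH - (y, \Dma^+(Q^*)z)_\gH = \left[(J_n y(x), z(x))_{\C^n}\right]_0^\infty = \lim_{x\to\infty}(J_n y(x), z(x))_{\C^n} - (J_n y(0), z(0))_{\C^n}.
\end{equation*}

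The main obstacle will be justifying that the boundary term at infinity vanishes, that is, $\lim_{x\to\infty}(J_n y(x), z(x))_{\C^n} = 0$. The argument runs as follows: since both $y, z \in L^2(\R_+, \C^n)$ and the left-hand side of the display is finite (being a difference of convergent inner products in $\gH$), the exact derivative $\frac{d}{dx}(J_n y, z)_{\C^n}$ is integrable on $\R_+$, so the function $x \mapsto (J_n y(x), z(x))_{\C^n}$ has a limit as $x\to\infty$. That limit must be zero, because $(J_n y, z)_{\C^n} \in L^1(\R_+)$ (by Cauchy--Schwarz, as a product of two $L^2$ functions $J_n y$ and $z$), and an integrable function on $[0,\infty)$ possessing a limit at infinity necessarily has that limit equal to zero. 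With the limit term eliminated, we obtain precisely \eqref{3.2}. I would note that this is entirely standard for Dirac-type expressions, so I expect the proof in the paper to cite or invoke a routine argument (e.g.\ referring to \cite{LM03} or \cite{LS91}) rather than spell out every estimate.
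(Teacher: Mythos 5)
Your proof is correct, but it takes a different route from the paper. You argue directly: the potential terms cancel pointwise, the derivative terms combine into the exact derivative $\tfrac{d}{dx}(J_n y,z)_{\C^n}$, and the boundary term at infinity is killed by the standard observation that $(J_n y(\cdot),z(\cdot))_{\C^n}$ is both integrable on $\R_+$ (Cauchy--Schwarz on two $L^2$ functions) and convergent at infinity (its derivative is integrable, again by Cauchy--Schwarz applied to $D(Q)y,\,z$ and $y,\,D(Q^*)z$), hence tends to zero. All of these steps are sound, including the sign bookkeeping via $J_n^*=-J_n$. The paper instead avoids redoing this computation: it forms the doubled, formally selfadjoint expression $D(\wt Q)$ with $\wt J_{2n}=\begin{pmatrix}0&J_n\\ J_n&0\end{pmatrix}$ and $\wt Q=\begin{pmatrix}0&Q\\ Q^*&0\end{pmatrix}$, invokes the known Lagrange identity for selfadjoint Dirac systems from \cite[Theorem 3.2]{LM03}, and then specializes to $\wt y=0\oplus y$, $\wt z=z\oplus 0$ to extract \eqref{3.2}. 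Your approach buys self-containedness (it re-proves the limit-at-infinity argument that \cite{LM03} encapsulates), while the paper's approach is shorter and consistent with the reduction-to-the-selfadjoint-case strategy it uses throughout (e.g.\ in the proof of Theorem \ref{th1.1_dom-max=dom-min}). Your closing guess that the paper would cite \cite{LM03} rather than spell out the estimates is exactly right.
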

  \begin{proof}
Let $\wt J_{2n}$ and   $\wt Q(x)$ be the same as in \eqref{3.1.2} in the proof of Theorem \ref{th1.1_dom-max=dom-min}, let $ D(\wt Q)\wt y$ be the formally selfadjoint expression \eqref{3.1.3} and let $\wt \gH:=L^2(\R_+, \C^{2n})=\gH\oplus\gH$. Then according to \cite[Theorem 3.2]{LM03}  the following Lagrange identity holds:
         \begin{equation}\label{3.4}
(\Dma^+ (\wt Q) \wt y,\wt z)_{\wt\gH} - (\wt y, \Dma^+ (\wt Q) \wt z)_{\wt\gH}=-(\wt J_{2n} \wt  y(0),  \wt z(0)), \quad \wt y, \wt z \in\dom \Dma^+ (\wt Q).
           \end{equation}
Let $y\in\dom \Dma^+(Q)$,  $z\in\dom \Dma^+(Q^*)$ and let $\wt y= 0\oplus y (\in \gH\oplus \gH), \; \wt z =z\oplus 0(\in \gH\oplus \gH)$. Clearly, $\wt y,\wt z \in AC_{\loc}(\R_+,\C^{2n})\cap \wt \gH$ and
         \begin{equation*}
D(\wt Q)\wt y=\{J_n y'(x)+Q(x)y(x), 0\}, \qquad D(\wt Q) \wt z=\{0,J_n z'(x)+Q^*(x)y(x)\},\;\;\;x\in\R_+.
          \end{equation*}
Therefore $D(\wt Q)\wt y \in \wt \gH, \;  D(\wt Q) \wt z \in \wt \gH$ and, consequently, $\wt y, \wt z\in \dom \Dma^+  (\wt Q)$ and  $\Dma^+  (\wt Q) \wt y = \{\Dma^+ (Q)y, 0\}, \; \Dma^+  (\wt Q) \wt z = \{0, \Dma^+  ( Q^*) z\}$. Hence
            \begin{equation*}
(\Dma^+  (\wt Q) \wt y ,\wt z)_{\wt \gH}- (\wt y, \Dma^+  (\wt Q) \wt z )_{\wt\gH}=(\Dma^+ (Q)y,z)_\gH - (y, \Dma^+ (Q^*)z)_\gH.
             \end{equation*}
Moreover,
         \begin{equation*}
(\wt J_{2n} \wt y(0), \wt z(0))_{\wt\gH}=(\{J_n y(0), 0\}, \{z(0),  0\}) _{\wt\gH} = (J_ny(0), z(0))_\gH
         \end{equation*}
and \eqref{3.4} yields \eqref{3.2}.
\end{proof}
   \begin{lemma}\label{lem3.2}
For any $h \in \C^n$ there exists $y\in\dom \Dma^+ (Q)$ with compact support such that $y(0)=h$.
\end{lemma}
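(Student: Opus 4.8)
The plan is to construct $y$ explicitly by taking a compactly supported smooth (or merely absolutely continuous) scalar cut-off and multiplying it by a carefully chosen constant vector, then correcting at the origin. The key observation is that the maximal operator $\Dma^+(Q)$ acts as $D(Q)y = J_n y' + Q(x)y$, so to produce a function with prescribed value $y(0)=h$ and compact support, I need only arrange that $y$ is absolutely continuous, that $D(Q)y \in \gH$, and that $y$ vanishes identically outside some interval $[0,b]$. First I would fix a scalar function $\chi \in AC_{\loc}(\R_+)$ with $\chi(0)=1$, $\supp \chi \subset [0,b]$ for some $b>0$, and $\chi$ constant near $0$ (say $\chi \equiv 1$ on $[0,\delta]$); for instance a piecewise-linear plateau-then-decay profile suffices.

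The natural candidate is $y(x) = \chi(x)\,h$. Then $y(0) = \chi(0)h = h$, $\supp y \subset [0,b]$ is compact, and $y \in AC_{\loc}(\R_+,\C^n)$ since $\chi$ is absolutely continuous and $h$ is constant. It remains to check that $D(Q)y \in L^2(\R_+,\C^n) = \gH$. Computing, $D(Q)y = J_n \chi'(x) h + \chi(x)Q(x)h$. The first term $J_n \chi'(x)h$ is in $L^2$ because $\chi'$ is (say bounded and compactly supported, or at worst in $L^2[0,b]$) and $J_n h$ is a constant vector. The second term $\chi(x)Q(x)h$ is supported in the compact set $[0,b]$ and there $Q \in L^1_{\loc}(\R_+,\B(\C^n))$ guarantees $Q(\cd)h \in L^1([0,b],\C^n)$; multiplying by the bounded cut-off $\chi$ keeps it in $L^1([0,b])$.

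The main obstacle is precisely this last point: $L^1$ integrability of $Q$ on compacts only gives $\chi(x)Q(x)h \in L^1([0,b])$, whereas membership in $\gH$ requires $L^2$. To overcome this I would not take a single fixed plateau width but instead exploit the freedom in choosing $\chi$: the issue is only the behaviour of $Q$, not of $\chi'$. The clean fix is to choose $\chi$ so that $y$ solves the homogeneous equation near the origin. Concretely, let $y_0$ be the solution of the initial value problem $D(Q)y_0 = 0$, i.e. $J_n y_0' + Q(x)y_0 = 0$, $y_0(0)=h$, which exists and lies in $AC_{\loc}(\R_+,\C^n)$ by the standard Carathéodory existence theory for $Q \in L^1_{\loc}$. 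On any compact $[0,b]$ we have $D(Q)y_0 = 0 \in \gH$ and $y_0$ is continuous hence bounded, so $y_0 \in L^2([0,b])$. Now set $y(x) = \chi(x)y_0(x)$ with $\chi$ a scalar cut-off as above with $\chi \equiv 1$ on $[0,\delta]$ and $\supp\chi \subset [0,b]$.

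With this choice $y(0) = y_0(0) = h$ and $\supp y$ is compact. For integrability, $D(Q)y = J_n\chi' y_0 + \chi D(Q)y_0 = J_n \chi'(x) y_0(x)$, since $D(Q)y_0 = 0$. The right-hand side is supported in $[\delta,b]$ where $\chi'$ is bounded and $y_0$ is continuous, hence $D(Q)y \in L^2(\R_+,\C^n)$, so $y \in \dom\Dma^+(Q)$. This establishes the claim. I expect the only subtlety to be invoking the correct existence result for the initial value problem under the hypothesis $Q \in L^1_{\loc}$, which is exactly the regularity under which $\dom\Dma^+(Q)$ is defined; everything else is routine.
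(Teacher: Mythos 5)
Your proof is correct, but it takes a genuinely different route from the paper. The paper disposes of the lemma in two lines: it notes the statement is known for formally selfadjoint Dirac expressions (citing Lesch--Malamud) and then applies that known result to the doubled, formally selfadjoint expression $D(\wt Q)$ built from $\wt J_{2n}=\begin{pmatrix}0&J_n\\ J_n&0\end{pmatrix}$ and $\wt Q=\begin{pmatrix}0&Q\\ Q^*&0\end{pmatrix}$ --- the same reduction device used throughout Section 3. You instead give a direct, self-contained construction: solve the homogeneous initial value problem $J_ny_0'+Qy_0=0$, $y_0(0)=h$ (Carath\'eodory theory applies since $J_n$ is invertible and $J_n^{-1}Q\in L^1_{\loc}$), and set $y=\chi y_0$ with a plateau cutoff $\chi\equiv 1$ near $0$. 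The key cancellation $D(Q)y=J_n\chi'y_0$ correctly circumvents the genuine obstacle you identify in your first paragraph (the naive choice $y=\chi h$ gives $D(Q)y$ only in $L^1_{\loc}$, not $L^2$), and all remaining checks (local absolute continuity of the product, boundedness of $\chi' y_0$ on its compact support) are routine. Your argument buys independence from the external reference and works verbatim in the non-selfadjoint setting without the doubling trick; the paper's argument buys brevity by reusing machinery it needs anyway for Theorem \ref{th1.1_dom-max=dom-min} and Proposition \ref{pr3.1}. In substance, your construction is essentially the proof hiding inside the cited selfadjoint result, so the two approaches are complementary rather than in conflict.
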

\begin{proof}
In the case of   the  formally selfadjoint expression \eqref{3.1}  the statement  is well known (see
e.g. \cite{LM03}). In the  general case it suffices to apply the known
statement  to the  formally selfadjoint expression \eqref{3.1.3}.
\end{proof}
For a subspace $\th\subset\C^n$ we let
         \begin{equation}\label{3.5}
\th^\times=\C^n\ominus J_n\th.
         \end{equation}
and define the operators $(D_{\th,0}^+)'(Q)$ 
and $D_{\th}^+(Q)$ 
 to be restrictions of $\Dma^+(Q)$ to the domains
%
%
  \begin{gather*}
\dom (D_{\th,0}^+)'(Q)=\{y\in \dom \Dma^+(Q):y(0)\in\th\;\;{\rm and}\;\; \supp y \;\; \text{is compact} \}
 \end{gather*}
and
    \begin{gather*}
 \dom D_{\th}^+(Q) = \{y\in\dom \Dma^+(Q): y(0)\in \th \},
  \end{gather*}
respectively. Denote also by  $\Dtmi^+(Q)$ 
 the closure of $(D_{\th,0}^+)'(Q)$.

\begin{theorem}\label{th3.3}
 With the above notations the following holds:
  \begin{enumerate}\def\labelenumi{\rm (\roman{enumi})}
\item
For any  subspace $\th$ of $\C^n$  the  operators $\Dtmi^+ (Q)$ and $\Dtma^+ (Q)$ coincide,
that is
         \begin{equation}\label{3.6}
D_{\th,0}^+(Q)=D_\th^+(Q).
         \end{equation}
Moreover,
         \begin{equation}\label{3.6.1}
(D_\th^+(Q))^* = D_{\th^\times}^+(Q^*).
         \end{equation}
\item  The equality
         \begin{equation}\label{3.7}
\wt A=D_\th^+(Q)
         \end{equation}
establishes a bijective correspondence  between all subspaces $\th\subset \C^n$ and all extensions $\wt A\in \Ext \{\Dmi^+(Q),\Dmi^+ (Q^*)\}$. In particular, $\Dmi^+(Q)=D_{\th_0}^+(Q)$ with $\th_0=\{0\}$ and hence
         \begin{equation}\label{3.8}
 \dom\Dmi^+(Q)=\{y\in\dom\Dma^+(Q): y(0)=0\}.
         \end{equation}
\item  The dual pair $\{\Dmi^+(Q),\Dmi^+(Q^*)\}$ is correct if and only if $n=2p$. If this condition is satisfied, then the equality \eqref{3.7} gives a  bijective correspondence  between all   subspaces $\th\subset \C^{2p}$ with $\dim\th=p$ and all quasi-selfadjoint extensions $\wt A\in \Ext \{\Dmi^+(Q),\Dmi^+ (Q^*)\}$.
\end{enumerate}
\end{theorem}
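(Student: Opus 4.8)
The plan is to prove Theorem \ref{th3.3} in three stages, corresponding to its three parts, exploiting the formally selfadjoint doubling $\wt Q$ from \eqref{3.1.2} together with the abstract machinery of Section \ref{sect1}. For part (i), the key observation is that the minimality relation \eqref{Intro1.3A_equal_domains} is already known in the formally selfadjoint case, namely for the doubled expression $D(\wt Q)$ on $L^2(\R_+,\C^{2n})$. I would first verify, as in the proof of Theorem \ref{th1.1_dom-max=dom-min}, that
\[
D_{\max}^+(\wt Q)=\begin{pmatrix} 0 & D_{\max}^+(Q)\\ D_{\max}^+(Q^*) & 0 \end{pmatrix},
\]
and that the boundary-subspace restriction corresponding to $\wt\th = \th^\times \oplus \th \subset \C^{2n}$ (chosen so that $\wt J_{2n}\wt\th$ is orthogonal in the right way, using \eqref{3.5}) decomposes compatibly under the block structure. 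The equality \eqref{3.6} then descends from the known selfadjoint-case equality $D_{\wt\th,0}^+(\wt Q)=D_{\wt\th}^+(\wt Q)$ by reading off the off-diagonal blocks. For \eqref{3.6.1}, I would use the Lagrange identity \eqref{3.2}: the adjoint of $D_\th^+(Q)$ consists exactly of those $z\in\dom D_{\max}^+(Q^*)$ for which the boundary form $(J_n y(0),z(0))_{\C^n}$ vanishes for all $y(0)\in\th$, which is precisely the condition $z(0)\in \C^n\ominus J_n\th=\th^\times$.

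For part (ii), the strategy is to show the map $\th\mapsto D_\th^+(Q)$ is a bijection onto $\Ext\{D_{\min}^+(Q),D_{\min}^+(Q^*)\}$. Surjectivity and injectivity both follow once I establish that $A\subset \wt A\subset B^*$ (with $A=D_{\min}^+(Q)$, $B=D_{\min}^+(Q^*)$, so $B^*=D_{\max}^+(Q)$) forces $\wt A$ to be a boundary restriction determined by a single subspace of boundary values. The essential input is \eqref{3.8}, i.e.\ that $\dom D_{\min}^+(Q)$ is exactly the kernel of the boundary evaluation $y\mapsto y(0)$ on $\dom D_{\max}^+(Q)$; this I get from part (i) applied with $\th=\{0\}$, using Lemma \ref{lem3.2} to guarantee the evaluation map is surjective onto $\C^n$. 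Given this, $\dom \wt A$ corresponds to a subspace $\th$ of $\dom D_{\max}^+(Q)/\dom D_{\min}^+(Q)\cong \C^n$, and the correspondence is manifestly bijective.

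For part (iii), correctness means the dual pair admits a quasi-selfadjoint extension in the sense of Definition \ref{def2.2}. By Proposition \ref{prop2.3}(ii), a necessary condition is that $n(A^*,B)$ be even; here $n(A^*,B)=\dim(\dom D_{\max}^+(Q)/\dom D_{\min}^+(Q))=n$ by \eqref{3.8} and Lemma \ref{lem3.2}, so correctness forces $n=2p$. Conversely, when $n=2p$ I would exhibit a quasi-selfadjoint extension directly and then characterize all of them. The quasi-selfadjointness condition \eqref{2.80}, combined with \eqref{3.6.1} and the dimension count $\dim\th^\times=n-\dim\th$, reduces to the requirement $\dim(\dom\wt A/\dom A)=\dim(\dom\wt A^*/\dom B)$, i.e.\ $\dim\th=\dim\th^\times=n-\dim\th$, giving $\dim\th=p$. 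Matching this against Proposition \ref{prop2.3}(ii), which asserts $\dim(\dom\wt A/\dom A)=n(A^*,B)/2=p$, confirms the equivalence.

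The main obstacle I anticipate is in part (i): carefully tracking how the boundary subspace $\wt\th\subset\C^{2n}$ for the doubled problem factors through the block decomposition so that restricting to the off-diagonal blocks recovers exactly $D_\th^+(Q)$ and not some spuriously coupled operator. The choice $\wt\th=\th^\times\oplus\th$ must be reconciled with $\wt J_{2n}=\begin{pmatrix}0&J_n\\J_n&0\end{pmatrix}$ and the self-orthogonality condition that makes the selfadjoint minimality result applicable; getting the interplay between $\th$ and $\th^\times$ right here is the delicate point, and everything in parts (ii) and (iii) is essentially bookkeeping once part (i) and \eqref{3.8} are secured.
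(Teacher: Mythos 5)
Your proposal is correct, and parts (ii) and (iii) track the paper's own argument almost exactly: identify $\dom\Dma^+(Q)/\dom\Dmi^+(Q)$ with $\C^n$ via \eqref{3.8} and Lemma \ref{lem3.2}, then run the dimension count $\dim\th+\dim\th^\times=n$ against the quasi-selfadjointness condition \eqref{2.80} and Proposition \ref{prop2.3}. Part (i) is where you genuinely diverge. The paper does not double the system at this point; it proves the single identity $(D_{\th,0}^+(Q))^*=(D_{\th}^+(Q))^*=D_{\th^\times}^+(Q^*)$ directly, by observing that both adjoints are restrictions of $\Dma^+(Q^*)=(\Dmi^+(Q))^*$, characterizing their domains through the Lagrange identity \eqref{3.2}, and using Lemma \ref{lem3.2} to see that the preminimal and the maximal $\th$-restrictions have the same set $\th$ of boundary values; both \eqref{3.6} and \eqref{3.6.1} then fall out in one stroke by taking adjoints once more. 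You instead obtain \eqref{3.6} by pushing the known formally selfadjoint half-line identity \eqref{Intro1.3A_equal_domains} for $D(\wt Q)$ through the block decomposition, and prove \eqref{3.6.1} separately via the Lagrange identity. Your route is sound — for a product subspace $\wt\th=\th_1\oplus\th_2$ the maximal, preminimal and closed minimal operators all decouple into off-diagonal blocks, since the graph norm splits, so the selfadjoint identity does descend componentwise — and it is more uniform with the paper's treatment of the whole line (Theorem \ref{th1.1_dom-max=dom-min}) and of the Lagrange identity itself; the paper's route is shorter and delivers the adjoint formula \eqref{3.6.1} for free. One remark: the ``delicate point'' you flag is not actually there. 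The identity \eqref{Intro1.3A_equal_domains} in the formally selfadjoint case holds for \emph{every} subspace $\wt\th\subseteq\C^{2n}$, with no self-orthogonality or Lagrangian condition relative to $\wt J_{2n}$, so any product subspace $\th_1\oplus\th$ (for instance $\th_1=\{0\}$ or $\th_1=\C^n$) yields \eqref{3.6}; the specific pairing $\th^\times\oplus\th$ only becomes relevant if you additionally want the doubled restriction to be selfadjoint, which is not needed here.
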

\begin{proof}
{\rm (i)} We first  show that
         \begin{equation}\label{3.9}
(D_{\th,0}^+(Q))^*=(\Dtma^+(Q))^*=D_{\th^\times}^+(Q^*).
         \end{equation}
Since $(\Dmi^+)'(Q)\subset (\Dtmi^+)'(Q)\subset\Dtma^+ (Q)$ and $((\Dmi^+)'(Q))^*=(\Dmi^+(Q))^*=\Dma^+(Q^*)$, it follows that $((\Dtmi^+)'(Q))^*\subset \Dma^+(Q^*)$ and $(\Dtma^+(Q))^*\subset \Dma^+(Q^*)$. This and the Lagrange identity \eqref{3.2} yield
         \begin{gather*}
\dom ((\Dtmi^+)'(Q))^*= \{z\in\dom \Dma^+(Q^*):(J_ny(0), z(0))=0, \, y\in\dom (\Dtmi^+)'(Q)\},\\
\dom (\Dtma^+(Q))^*= \{z\in\dom \Dma^+(Q^*):(J_ny(0), z(0))=0, \, y\in\dom \Dtma^+ (Q)\}.
         \end{gather*}
 It follows  from  Lemma \ref{lem3.2} that
         \begin{gather*}
\{y(0):y\in\dom (\Dtmi^+)'(Q)\} = \{y(0):y\in\dom \Dtma^+(Q)\}=\th.
         \end{gather*}
Therefore by \eqref{3.5} $\dom ((\Dtmi^+)'(Q))^*=\dom (\Dtma^+(Q))^*=\dom D_{\th^\times}^+(Q^*)$, which together with the equality $(\Dtmi'(Q))^*= (\Dtmi(Q))^*$ yields \eqref{3.9}. The equalities \eqref{3.6} and \eqref{3.6.1} are immediate from \eqref{3.9}.

{\rm (ii)} The equalities $\Dmi^+ (Q)=D_{\{0\}}^+(Q)$ and \eqref{3.8} directly follow from the  definition of $\Dmi^+ (Q)$ and statement {\rm (i)}. Next, for each subspace $\th\subseteq\C^n$ the inclusion $D_\th(Q)\in \Ext \{\Dmi^+(Q),\Dmi^+ (Q^*)\}$ is obvious. Conversely, let $\wt A\in \Ext \{\Dmi^+(Q),\Dmi^+ (Q^*)\}$ and let
         \begin{equation}\label{3.10}
         \th:=\{y(0):y\in\dom \wt A\}.
         \end{equation}
Then for any $y\in \dom \Dt^+ (Q)$ there exists $u\in\dom \wt A$ such that $y(0)=u(0)$ and hence $v:=y-u$ satisfies $v(0)=0$. Therefore by \eqref{3.8} $v\in \dom\Dmi^+ (Q)\subset \dom \wt A$ and, consequently, $y=u+v\in \dom\wt A$. Thus $\dom \Dt^+ (Q)\subset \dom\wt A$. Since the
inverse inclusion $\dom\wt A\subset\dom \Dt^+ (Q)$ is obvious, it follows that $\dom\wt A=\dom \Dt^+ (Q)$ and hence \eqref{3.7} holds with $\th$ given by \eqref{3.10}.

{\rm (iii)} It follows from \eqref{3.8} that for each extension $\wt A=\Dt (Q)\in \Ext\{\Dmi^+(Q),\Dmi^+(Q^*)\}$ the equality
         \begin{equation}\label{3.11}
\dim (\dom \wt A/\dom\Dmi^+(Q))=\dim \th
         \end{equation}
holds. Moreover, by \eqref{3.6.1} one has $\dim (\dom \wt A^*/\dom\Dmi^+(Q^*))=\dim
\th^\times$. Combining these facts with the obvious identity  $\dim\th
+\dim\th^\times=n$ and statement (ii), one gets the required statement.
\end{proof}
\begin{corollary}\label{cor3.3.0}
With the notations above
   \begin{equation}\label{3.11a}
n((\Dmi^+(Q^*))^*,\Dmi^+(Q))(=\dim (\dom D_{\max}^+(Q)/\dom\Dmi^+(Q)))= n.
 \end{equation}
\end{corollary}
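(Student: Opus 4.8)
The plan is to read off Corollary~\ref{cor3.3.0} directly from Theorem~\ref{th3.3} by specializing its general machinery to the extreme subspaces $\th = \{0\}$ and $\th = \C^n$. First I would recall from Theorem~\ref{th3.3}(ii) that the full maximal operator $\Dma^+(Q)$ is itself a member of $\Ext\{\Dmi^+(Q),\Dmi^+(Q^*)\}$, obtained by taking $\th = \C^n$. Indeed, imposing the boundary condition $y(0)\in\C^n$ is no restriction at all, so $D^+_{\C^n}(Q) = \Dma^+(Q)$; and at the opposite extreme, the relation \eqref{3.8} gives $D^+_{\{0\}}(Q) = \Dmi^+(Q)$. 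Thus the quotient $\dom\Dma^+(Q)/\dom\Dmi^+(Q)$ is exactly the quotient $\dom D^+_{\C^n}(Q)/\dom D^+_{\{0\}}(Q)$.

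Next I would invoke the dimension count \eqref{3.11} established in the proof of Theorem~\ref{th3.3}(iii): for the extension $\wt A = D^+_\th(Q)$ one has $\dim(\dom\wt A/\dom\Dmi^+(Q)) = \dim\th$. Applying this with $\th = \C^n$ yields immediately
\begin{equation*}
\dim\bigl(\dom\Dma^+(Q)/\dom\Dmi^+(Q)\bigr) = \dim\C^n = n.
\end{equation*}
Finally, to match the left-hand side of \eqref{3.11a}, I would recall from the setup in Section~3.1 that $(\Dmi^+(Q^*))^* = \Dma^+(Q)$, so that by the definition of $n(\,\cdot\,,\,\cdot\,)$ in Corollary~\ref{cor2.1} the quantity $n((\Dmi^+(Q^*))^*,\Dmi^+(Q))$ is precisely $\dim(\dom\Dma^+(Q)/\dom\Dmi^+(Q))$, and the parenthetical identification in the statement is just this rewriting. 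This closes the argument.

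There is essentially no obstacle here; the corollary is a bookkeeping consequence of results already in hand, and the only care needed is to confirm the two identifications $D^+_{\C^n}(Q)=\Dma^+(Q)$ and $(\Dmi^+(Q^*))^*=\Dma^+(Q)$, both of which are transparent from the definitions. If one prefers not to lean on \eqref{3.11}, an equivalent route is to combine the identity $\dim\th + \dim\th^\times = n$ with \eqref{3.6.1}, but specializing the already-proven count \eqref{3.11} to $\th=\C^n$ is the most economical path and is the one I would take.
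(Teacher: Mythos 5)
Your argument is correct and is essentially the paper's own proof: identify $\Dma^+(Q)$ with $D^+_{\th}(Q)$ for $\th=\C^n$ and then apply the dimension count \eqref{3.11} from the proof of Theorem \ref{th3.3}(iii), together with the identification $(\Dmi^+(Q^*))^*=\Dma^+(Q)$. No issues.
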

\begin{proof}
It follows from Lemma \ref{lem3.2} that $\Dma^+(Q)=D_\th(Q)$ with $\th=\C^n$. Combining
this relation with  \eqref{3.11} yields \eqref{3.11a}.
  \end{proof}
  \begin{remark}\label{rem3.3.1}
If $n=2p$ and
      \begin{equation}\label{3.11.0}
J_n=\begin{pmatrix} 0& -I_p \cr I_p &0 \end{pmatrix}:\C^p\oplus\C^p\to  \C^p\oplus\C^p,
        \end{equation}
then a subspace $\th \in \C^p\oplus\C^p$ is called a linear relation in $\C^p$ and the
linear relation
      \begin{equation*}
\th^*=\C^n\ominus J_n \th= \{\{h,h'\}\in  \C^p\oplus\C^p:(h',k)_{\C^p}- (h,k')_{\C^p}=0 \;\; \text{\rm  for all}\;\; \{k,k'\}\in \th \}
      \end{equation*}
is called the adjoint relation of $\th$ (see e.g. \cite{Cod73}). Clearly, in this case
$\th^\times=\th^*$. Moreover, if  the  Dirac type expression \eqref{3.1}, \eqref{3.11.0} is formally selfadjoint and $\th=\th^*$, then by Theorem \ref{th3.3} we have that $D_\th^+ (Q)(= D_{\th,0}^+ (Q))$ is
a selfadjoint extension of $\Dmi^+ (Q)$.
\end{remark}

\begin{definition}\label{def3.4}
A pair of operators (matrices) $C_1,C_2\in \B (\C^p)$ will be called admissible if $\rank (C_1,C_2)=p$.
\end{definition}
\begin{corollary}\label{cor3.5}
Assume that $n=2p$, so that the  functions $y\in\dom\Dma^+ (Q)$ and $z\in \dom \Dma^+ (Q^*)$ admit the representation
         \begin{equation}\label{3.11.1}
y(t)=\{y_1(t),y_2(t) \} (\in \C^p\oplus\C^p),\qquad  z(t)=\{z_1(t),z_2(t) \} (\in \C^p\oplus\C^p), \quad t\in\R_+.
         \end{equation}
Then for each admissible operator pair $C_1,C_2\in \B (\C^p)$ the equalities (the boundary conditions)
         \begin{equation}\label{3.11.2}
\dom \wt A =\{y\in\dom\Dma^+ (Q): C_1 y_1(0)+C_2 y_2(0)=0\}, \quad \wt A=\Dma^+ (Q) \upharpoonright \dom\wt A
         \end{equation}
define a quasi-selfadjoint extension $\wt A \in \Ext\{\Dmi^+(Q), \Dmi^+(Q^*)\}$ and conversely for each such   extension $\wt A$ there exists an admissible operator pair $C_1,C_2\in \B (\C^p)$ such that \eqref{3.11.2} holds.
\end{corollary}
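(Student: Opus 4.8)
The plan is to reduce the statement entirely to the bijective correspondence of Theorem \ref{th3.3}(iii) between $p$-dimensional subspaces $\th\subset\C^{2p}$ and quasi-selfadjoint extensions $\wt A\in\Ext\{\Dmi^+(Q),\Dmi^+(Q^*)\}$. The bridge between the two formulations is the elementary remark that the boundary condition $C_1y_1(0)+C_2y_2(0)=0$ singles out exactly those $y\in\dom\Dma^+(Q)$ whose boundary value $y(0)=\{y_1(0),y_2(0)\}$ lies in the subspace $\th:=\{\{h_1,h_2\}\in\C^p\oplus\C^p:C_1h_1+C_2h_2=0\}$, that is, in the kernel of the linear map $\Phi\colon\C^p\oplus\C^p\to\C^p$ given by $\Phi\{h_1,h_2\}=C_1h_1+C_2h_2$.

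First I would treat the direct implication. Given an admissible pair, the condition $\rank(C_1,C_2)=p$ says precisely that $\Phi$ is surjective onto $\C^p$; hence $\dim\th=\dim\ker\Phi=2p-p=p$ by the rank--nullity theorem. Comparing the two domain descriptions then gives $\dom\wt A=\{y\in\dom\Dma^+(Q):y(0)\in\th\}=\dom D_\th^+(Q)$, and since both operators act as the restriction of $\Dma^+(Q)$, we obtain $\wt A=D_\th^+(Q)$. As $\dim\th=p$, Theorem \ref{th3.3}(iii) guarantees that $\wt A$ is a quasi-selfadjoint extension of the dual pair, which is what is claimed.

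For the converse I would start from an arbitrary quasi-selfadjoint extension $\wt A$. By Theorem \ref{th3.3}(iii) it equals $D_\th^+(Q)$ for a (unique) subspace $\th\subset\C^{2p}$ with $\dim\th=p$, so it only remains to realize $\th$ as the kernel of an admissible pair. Since $\dim(\C^{2p}/\th)=p$, one may choose a surjective linear map $\Phi\colon\C^p\oplus\C^p\to\C^p$ with $\ker\Phi=\th$, and set $C_1h_1:=\Phi\{h_1,0\}$ and $C_2h_2:=\Phi\{0,h_2\}$, so that $\Phi\{h_1,h_2\}=C_1h_1+C_2h_2$. Surjectivity of $\Phi$ is exactly $\rank(C_1,C_2)=p$, so the pair is admissible, and $\dom\wt A=\dom D_\th^+(Q)$ is described by the boundary condition \eqref{3.11.2}.

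The whole argument is a dictionary between $p$-dimensional subspaces of $\C^{2p}$ and matrix boundary conditions, with all of the operator-theoretic content supplied by Theorem \ref{th3.3}(iii). I do not expect a genuine obstacle; the only point requiring mild care is the purely linear-algebraic equivalence of $\rank(C_1,C_2)=p$ with $\dim\ker(C_1,C_2)=p$, i.e.\ matching the admissibility normalization to the dimension constraint $\dim\th=p$. Note also that this correspondence $(C_1,C_2)\mapsto\wt A$ is not one-to-one, since left multiplication of $(C_1,C_2)$ by an invertible matrix in $\B(\C^p)$ leaves $\th$ unchanged; accordingly only existence in each direction is asserted, and the plan delivers exactly that.
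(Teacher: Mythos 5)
Your argument is correct and is essentially the paper's own proof: the paper likewise identifies the boundary condition with the subspace $\th=\{\{h_1,h_2\}:C_1h_1+C_2h_2=0\}$, notes the (elementary) bijection-up-to-normalization between admissible pairs and $p$-dimensional subspaces of $\C^{2p}$, and then invokes Theorem \ref{th3.3}(iii). You merely spell out the rank--nullity bookkeeping that the paper dismisses with ``clearly.''
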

\begin{proof}
Clearly, for each admissible pair  $C_1,C_2\in \B (\C^p)$  the equality
         \begin{equation}\label{3.11.3}
\th=\{\{h_1,h_2\}\in \C^p\oplus\C^p: C_1 h_1+ C_2 h_2=0\}
         \end{equation}
defines a subspace $\th\in\C^{2p}$ with  dimension $\dim\th=p $ and conversely for each such  subspace $\th$  there exists an admissible pair  $C_1,C_2\in \B (\C^p)$ such that \eqref{3.11.3} holds. This and Theorem \ref{th3.3}, (iii) yield the desired statement.
\end{proof}
\begin{corollary}\label{cor3.6}
Let $A:=\Dmi^+(Q)$ and $B:=\Dmi^+(Q^*)$. Assume that there exists an extension $\wt A\in \Ext \{A,B\}$ such that $\rho (\wt A)\neq\emptyset $ and $\dim\gN_{\overline \l_0}(A)=\dim\gN_{\l_0}(B)=:p$ for some $\l_0\in\rho (\wt A)$. Then:
\begin{enumerate}\def\labelenumi{\rm (\roman{enumi})}
\item $n=2p$ and
         \begin{equation}\label{3.12}
\dim\gN_{\overline \l}(A)=\dim\gN_{\l}(B)=p,\quad \l\in\rho (\wt A);
         \end{equation}

 \item $\wt A$ is a quasi-selfadjoint extension of the dual pair $\{A,B\}$ (hence this dual  pair is correct) and $\wt A=D_\th(Q)$ with a subspace $\th\in\C^{2p}$ of the dimension $\dim\th=p$.
\end{enumerate}
Conversely, let $n=2p$ and let $\wt A\in \Ext \{A,B\}$ be a quasi-selfadjoint extension with  $\rho (\wt A)\neq\emptyset $. Then \eqref{3.12} is valid.
\end{corollary}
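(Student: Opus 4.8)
The plan is to reduce everything to the abstract machinery of Section \ref{sect1}, since $A=\Dmi^+(Q)$ and $B=\Dmi^+(Q^*)$ already form a dual pair of closed densely defined operators by the discussion preceding Theorem \ref{th3.3}. First I would observe that the existence of an extension $\wt A\in\Ext\{A,B\}$ with $\rho(\wt A)\neq\emptyset$ places us exactly in the setting of Corollary \ref{cor2.11}, so the numbers $\dim\gN_{\overline\l}(A)$ and $\dim\gN_{\l}(B)$ are constant on $\rho(\wt A)$ and on $\rho(\wt A^*)$, respectively. Since $\l_0\in\rho(\wt A)$ gives $\dim\gN_{\overline{\l_0}}(A)=\dim\gN_{\l_0}(B)=p$, the constancy yields \eqref{3.12} immediately, establishing the first half of (i). For the claim $n=2p$, I would invoke Corollary \ref{cor3.3.0}, which states $\dim(\dom\Dma^+(Q)/\dom\Dmi^+(Q))=n$, together with \eqref{defn} from Corollary \ref{cor2.11}, namely $\dim\gN_{\overline{\l_0}}(A)=\dim(\dom\wt A/\dom A)$. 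Applying the identity \eqref{2.81} of Proposition \ref{prop2.3}(i) to the extension $\wt A$ and using $\dim\gN_{\overline{\l_0}}(A)=\dim\gN_{\l_0}(B)=p$ should force $n(A^*,B)=2p$; but $n(A^*,B)=n((\Dmi^+(Q^*))^*,\Dmi^+(Q))=n$ by \eqref{3.11a}, hence $n=2p$.

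For part (ii), once $\dim\gN_{\overline{\l_0}}(A)=\dim\gN_{\l_0}(B)$ is known, the quasi-selfadjointness of $\wt A$ follows by reading off the defining equality \eqref{2.80} of Definition \ref{def2.2}: using \eqref{defn} and \eqref{defnm} from Corollary \ref{cor2.11} one has $\dim(\dom\wt A/\dom A)=\dim\gN_{\overline{\l_0}}(A)$ and $\dim(\dom\wt A^*/\dom B)=\dim\gN_{\l_0}(B)$, and these are equal, which is precisely \eqref{2.80}. Thus $\wt A$ is quasi-selfadjoint, and consequently the dual pair $\{A,B\}$ is correct by Definition \ref{def2.2}(ii). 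Finally, since $n=2p$, Theorem \ref{th3.3}(iii) tells us that quasi-selfadjoint extensions correspond bijectively to $p$-dimensional subspaces $\th\subset\C^{2p}$ via $\wt A=D_\th^+(Q)$, giving the last assertion of (ii).

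For the converse, I would start from $n=2p$ and a quasi-selfadjoint $\wt A\in\Ext\{A,B\}$ with $\rho(\wt A)\neq\emptyset$. By Proposition \ref{prop2.3}(ii) applied to such an extension, $n(A^*,B)/2=\dim(\dom\wt A/\dom A)=\dim(\dom\wt A^*/\dom B)$; combining this with $n(A^*,B)=n=2p$ gives $\dim(\dom\wt A/\dom A)=\dim(\dom\wt A^*/\dom B)=p$. Translating back through \eqref{defn} and \eqref{defnm} yields $\dim\gN_{\overline{\l_0}}(A)=\dim\gN_{\l_0}(B)=p$ for any $\l_0\in\rho(\wt A)$, and the constancy from Corollary \ref{cor2.11} upgrades this to \eqref{3.12} on all of $\rho(\wt A)$.

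The step I expect to require the most care is the derivation $n=2p$ in part (i): one must correctly match the abstract quantity $n(A^*,B)$ with the Dirac-specific count $n$ from Corollary \ref{cor3.3.0} and feed in the hypothesis $\dim\gN_{\overline{\l_0}}(A)=\dim\gN_{\l_0}(B)=p$ through the right identity in Proposition \ref{prop2.3}. The bookkeeping of which quotient dimension equals which defect subspace dimension (and keeping the roles of $A$ versus $B$, and of $\l_0$ versus $\overline{\l_0}$, straight) is where an error would most easily creep in; everything else is a direct citation of the abstract results.
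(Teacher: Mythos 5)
Your proposal is correct and follows essentially the same route as the paper: both reduce to the abstract dual-pair identities, obtain $n=2p$ by matching $n(B^*,A)=n$ from \eqref{3.11a} with the count $\dim\gN_{\overline\l_0}(A)+\dim\gN_{\l_0}(B)=2p$ (you via \eqref{2.81} with \eqref{defn}--\eqref{defnm}, the paper via \eqref{2.14}), get \eqref{3.12} from the constancy in Corollary \ref{cor2.11}, and settle (ii) and the converse through \eqref{defn}, \eqref{defnm} and Theorem \ref{th3.3}(iii). The only cosmetic deviation is that in the converse you invoke Proposition \ref{prop2.3}(ii) where the paper uses Theorem \ref{th3.3}(iii) together with \eqref{3.11}; both yield $\dim(\dom\wt A/\dom A)=p$ and the argument goes through.
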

\begin{proof}
{\rm (i)}
Combining of \eqref{3.11a} with  \eqref{2.14} gives $n=2p$. Equality \eqref{3.12} is immediate from Corollary \ref{cor2.11}.

{\rm (ii)} This statement follows from \eqref{defn}, \eqref{defnm} and Theorem \ref{th3.3}, (iii).

Assume now that $n=2p$ and $\wt A\in\Ext \{A,B\}$ is a quasi-selfadjoint extension with $\rho (\wt A)\neq \emptyset$. Then by Theorem \ref{th3.3}, (iii) and \eqref{3.11} we have $\dim (\dom \wt A/ \dom \Dmi^+(Q))=p$, which in view of \eqref{defn} and \eqref{defnm} yields \eqref{3.12}.
\end{proof}

   \section{Weyl solutions and  Weyl functions for  Dirac type operators on the half-line} \label{Dirac_2}

\subsection{Boundary triples for dual pairs and their Weyl functions}
We first recall the definitions of a boundary triple for a dual pair $\{A,B\}$ of closed densely defined  operators $A,B$ in $\gH$ and their respective $\gamma$-field and   Weyl function.
\begin{definition}\label{def3.7}$\,$\cite{Lyantze}
A  collection $\Pi=\bta$ consisting of Hilbert spaces $\cH_0$ and $\cH_1$ and linear mappings
        \begin{equation*}
\Gamma^B=\begin{pmatrix}\Gamma ^B_0\cr\Gamma ^B_1\end{pmatrix}: \dom B^*\to \cH_0\oplus \cH_1\ \ \text{and} \quad
\Gamma^A=\begin{pmatrix}\Gamma^A_0\cr\Gamma ^A_1\end{pmatrix}: \dom A^*\to \cH_1\oplus \cH_0
        \end{equation*}
is called a boundary triple for  $\{A,B\}$ if the mappings $\G^B$ and $\G^A$ are surjective and the following Green identity holds:
        \begin{equation}\label{3.14}
(B^*f,g)-(f,A^*g)=(\Gamma ^B_1 f,\Gamma ^A_0  g)- (\Gamma ^B_0
f,\Gamma ^A_1 g), \qquad   f\in \dom B^*,\ g\in \dom A^*.
          \end{equation}
\end{definition}
With each boundary triple $\Pi = \bta$ for $\{A,B\}$ one associates an extension
$A_0\in \Ext \{A,B\}$ given by $A_0=B^*~\up~\ker \G_0^B$. Moreover, if  $\rho (A_0)\neq
\emptyset$, one associates with $\Pi$  a   $\g$-field and the corresponding Weyl
function.
%
%
   \begin{definition}\label{def3.8}$\,$\cite{MM02}
The operator-functions $M(\cd):\rho(A_0)\to \B (\cH_0,\cH_1)$ and $\g(\cd):\rho(A_0)\to \B (\cH_0,\gH)$ defined by
        \begin{equation}\label{3.15}
\G_1^B\up \gN_\l(B)=M(\l)(\G_0^B\up \gN_\l(B)) \;\;\; {\rm and} \;\;\;  \g(\l)=(\G_0^B\up \gN_\l(B))^{-1},\quad \l\in\rho (A_0)
        \end{equation}
are called the Weyl function and the $\g$-field of the boundary triple $\Pi=\bta$ for
$\{A,B\}$ respectively.
\end{definition}
Note that Lemma \ref{lemMalMog} ensures  the $\g$-field $\g(\cdot)$ and Weyl function
are well defined. Indeed, it follows from the  decomposition \eqref{2.11} that for each
$\l\in\rho (A_0) $ the mapping $\G_0^B\up \gN_\l(B)$ is a topological isomorphism of
$\gN_\l(B)$ onto $\cH_0$, hence $\g(\cdot)$ is well defined on  $\rho(A_0)$. Moreover,
it is shown in \cite{MM02} that $M(\cd)$ and $\g(\cd)$  are holomorphic in $\rho(A_0)$.
   \begin{remark}\label{rem3.8.1}
Let $S$ be a  closed densely defined symmetric operator in $\gH$. Then a collection $\Pi=\{\cH,\G_0,\G_1\}$ consisting of a Hilbert space $\cH$ and linear operators $\G_j:\dom S^*\to \cH, \; j\in\{0,1\},$ is called a boundary triple for $S^*$ if the operator  $\G=(\G_0,\G_1)^\top$ is surjective  and the abstract Green identity
        \begin{equation}\label{3.15.0}
(S^*f,g)-(f,S^* g)=(\G_1 f, \G_0 g)-(\G_0 f, \G_1 g), \quad f,g\in\dom S^*
        \end{equation}
is valid (see e.g. \cite{GG91}). In this connection note that
Definition \ref{def3.7} of a boundary triple $\{\cH_0\oplus\cH_1,\G^B,\G^A\}$ for a dual pair $\{A,B\}$ generalizes the above   definition of a boundary triple $\{\cH,\G_0,\G_1\}$ for  $S^*$  and coincides with it in the case $A=B=:S$ if additionally $\cH_1=\cH_0=:\cH$ and $\G^B=\G^A=:\G$. Observe also that in this case $S_0=S^*\up\ker\G_0$ is a selfadjoint extension of $S$ and  the Weyl function $M(\cd)$ of the triple $\Pi$ in the sense of Definition \ref{def3.8} turns into the Weyl function in the sense of \cite{DM91a}.
\end{remark}
\begin{lemma}\label{lem3.8.2}
Let $S$ be a closed densely defined symmetric operator in $\gH$ with the deficiency indices $n_\pm (S)=\dim \gN_\l(S), \; \l\in\C_\pm,$  let $T\in \B(\gH)$ be a selfadjoint operator, let $\a=\inf\limits_{f\in\gH} \frac {(Tf,f)}{||f||^2}$ and  $\b=\sup\limits_{f\in\gH}  \frac {(Tf,f)}{||f||^2}$ be   the  lower lower and the  upper bounds of $T$ respectively and let
        \begin{equation*}
        A=S+i T, \qquad B=S-iT.
        \end{equation*}
Then:
\begin{enumerate}\def\labelenumi{\rm (\roman{enumi})}
\item We have
 $\dom A=\dom B=\cD$, where $\cD:=\dom S$. Moreover,
        \begin{equation*}
\a ||f||^2\leq \im (Af,f)\leq  \b ||f||^2, \qquad - \b ||f||^2\leq\im (Bf,f)\leq -\a ||f||^2, \quad f\in\cD
         \end{equation*}
and the operators $A$ and $B$ form a dual pair \{A,B\}.
\item The inclusion
$(\C_{\a,-}\cup \C_{\b,+})\subset \wh \rho \{A,B\}$ holds and
        \begin{gather}
\dim \gN_\l(B)=n_+(S), \qquad \dim \gN_{\overline\l}(A) =n_-(S),\quad \l\in\C_{\b,+}\label{3.15.1},\\
\dim \gN_\l(B)=n_-(S), \qquad \dim \gN_{\overline\l}(A) =n_+(S),\quad \l\in\C_{\a,-}\label{3.15.2}.
\end{gather}
\item
The following equalities hold
        \begin{gather}\label{3.15.3}
\dom A^*=\dom B^*=\cD_*, \qquad A^*=S^*-iT, \quad B^*= S^*+iT,
         \end{gather}
where $\cD_*:=\dom S^*$.
\end{enumerate}
\end{lemma}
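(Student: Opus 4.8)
The plan is to treat (i) and (iii) by direct computation and to reserve the real work for the defect-number identification in (ii). Throughout I use that $S$ is symmetric, so $(Sf,f)\in\R$ for $f\in\cD$, and that $T=T^*\in\B(\gH)$ is everywhere defined with $\a\|f\|^2\le(Tf,f)\le\b\|f\|^2$.

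First I would dispose of (i). Since $iT\in\B(\gH)$ is everywhere defined, $\dom A=\dom(S+iT)=\dom S=\cD$ and likewise $\dom B=\cD$. For $f\in\cD$ both $(Sf,f)$ and $(Tf,f)$ are real, so $\im(Af,f)=(Tf,f)$ and $\im(Bf,f)=-(Tf,f)$, and the stated two-sided bounds are immediate from the definition of $\a,\b$. The dual pair relation follows from $(Af,g)-(f,Bg)=\bigl[(Sf,g)-(f,Sg)\bigr]+\bigl[(iTf,g)-(f,-iTg)\bigr]$ for $f,g\in\cD$, where the first bracket vanishes by symmetry of $S$ and the second by selfadjointness of $T$. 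For (iii) I would invoke the standard adjoint formula for a bounded everywhere-defined perturbation: $A^*=(S+iT)^*=S^*-iT$ and $B^*=S^*+iT$, both with domain $\dom S^*=\cD_*$. I record (iii) before (ii) because the identifications $B^*=S^*+iT$ and $A^*=S^*-iT$ are exactly what let me read the defect subspaces as kernels of $S^*\pm iT-\l$.

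The substance is (ii), which I would split into a regularity estimate and a constancy/identification argument. For $\l$ with $\nu:=\im\l>\b$ and $f\in\cD$,
$$\|(A-\l)f\|\,\|f\|\ge|\im((A-\l)f,f)|=\bigl|(Tf,f)-\nu\|f\|^2\bigr|\ge(\nu-\b)\|f\|^2,$$
so $\|(A-\l)f\|\ge(\nu-\b)\|f\|$ and $\l\in\wh\rho(A)$; the same computation gives $\|(B-\ov\l)g\|\ge(\nu-\b)\|g\|$, i.e.\ $\ov\l\in\wh\rho(B)$, whence $\C_{\b,+}\subset\wh\rho\{A,B\}$, and symmetrically $\C_{\a,-}\subset\wh\rho\{A,B\}$. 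Since $\ov\l\in\wh\rho(B)$ forces $\ran(B-\ov\l)$ to be closed with $\ran(B-\ov\l)^\perp=\ker(B^*-\l)=\gN_\l(B)$, and since the defect number is classically constant on connected components of the field of regularity, $\dim\gN_\l(B)$ is constant on the connected half-plane $\C_{\b,+}$; the same holds for $\dim\gN_{\ov\l}(A)$ on $\C_{\b,+}$, and for both quantities on $\C_{\a,-}$.

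It remains to identify these constants, and this is the step I expect to be the main obstacle. The idea is a homotopy in the coupling: set $B_t:=S-itT$ (so $B_t^*=S^*+itT$) for $t\in[0,1]$, a bounded-perturbation family with common domain $\cD$ joining $S$ at $t=0$ to $B$ at $t=1$. If $\l$ is chosen with $\nu>\max(\b,0)$, then for $g\in\cD$
$$\im\bigl((B_t-\ov\l)g,g\bigr)=-t(Tg,g)+\nu\|g\|^2\ge(\nu-t\b)\|g\|^2\ge(\nu-\max(\b,0))\|g\|^2,$$
so $B_t-\ov\l$ is bounded below \emph{uniformly} in $t\in[0,1]$; by stability of the defect (cokernel) dimension along such a uniformly bounded-below continuous family, $\dim\ker(B_t^*-\l)$ is constant in $t$, and at $t=0$ it equals $\dim\gN_\l(S)=n_+(S)$ because $\nu>0$. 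Combined with the constancy over $\C_{\b,+}$ already shown, this gives $\dim\gN_\l(B)=n_+(S)$ throughout $\C_{\b,+}$. Running the parallel family $A_t:=S+itT$ at the same $\l$ yields $\dim\gN_{\ov\l}(A)=\dim\gN_{\ov\l}(S)=n_-(S)$ (as $\im\ov\l<0$), which is \eqref{3.15.1}; choosing instead $\l$ with $\nu<\min(\a,0)$ and repeating gives \eqref{3.15.2}. The delicate points are the choice of $\l$ making the \emph{entire} segment $\{B_t-\ov\l:t\in[0,1]\}$ uniformly bounded below — which forces $\nu>\max(\b,0)$ rather than merely $\nu>\b$ — and the invocation of the correct semi-Fredholm stability statement for the cokernel dimension; the field-of-regularity estimate is then used a second time to spread the value from this one convenient $\l$ to the whole half-plane.
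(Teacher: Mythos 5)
Your proposal is correct and follows essentially the route the paper takes: parts (i) and (iii) by the same direct computations the paper dismisses as obvious, and part (ii) by the standard perturbation-theoretic stability of deficiency numbers under bounded perturbations in the region where the operators stay uniformly bounded below, which is exactly the content of the result from Birman--Solomyak that the paper cites without further argument. Your explicit homotopy $B_t=S-itT$ with $\im\l>\max(\b,0)$, combined with constancy of $\dim\gN_\l(B)$ on connected components of the field of regularity, correctly supplies the details behind that citation.
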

\begin{proof}
Statements (i) and (iii) are obvious. Statement (ii) is immediate from the known results
of the perturbation theory for linear operators (see e.g. \cite[Theorem 3.7.1]{BirSol80}) 
\end{proof}
\begin{theorem}\label{th3.8.3}
 With  the assumptions of Lemma \ref{lem3.8.2}  let   also  $n_+(S)=n_-(S)$, $\cD_*=\dom S^*$ and let $\wh\Pi =\{\cH,\G_0,\G_1\}$ be a boundary triple for $S^*$ (see Remark \ref{rem3.8.1}). Then:
\begin{enumerate}\def\labelenumi{\rm (\roman{enumi})}
\item
the collection $\Pi=\{\cH\oplus\cH, \G^B,\G^A\}$ with operators $\G^B=(\G_0^B,\G_1^B)^\top:\cD_*\to \cH\oplus\cH$ and $\G^A=(\G_0^A,\G_1^A)^\top:\cD_*\to \cH\oplus\cH$ defined by
        \begin{gather}\label{3.15.4}
\G_0^B=  \G_0^A=\G_0, \qquad   \G_1^B=  \G_1^A=\G_1
        \end{gather}
is a boundary triple for the dual pair $\{A,B\}$. Moreover, the  extension $A_0(=B^*\up\ker \G_0^B)\in\Ext \{A,B\}$ satisfies
        \begin{gather}\label{3.15.5}
\a  ||f||^2\leq  \im (A_0f,f)\leq \b  ||f||^2, \;\; f\in\dom A_0, \;\; \; {\rm and}\;\;\; (\C_{\a,-}\cup \C_{\b,+})\subset \rho (A_0);
       \end{gather}
\item
the Weyl function $M(\cd)$ of the triple $\Pi$ satisfies the identities
       \begin{gather}
M(\l)-M^*(\mu) =\g^*(\mu)[(\l-\overline \mu)I_\gH-2i T] \g(\l), \quad \l,\mu \in\rho (A_0)\label{3.15.6}\\
\im M(\l)=\g^*(\l)(\im\l\cdot I_\gH-T)\g(\l), \quad \l\in\rho (A_0),\label{3.15.7}
\end{gather}
where $\g(\l)$ is the $\g$-field of  $\Pi$. Moreover,
\begin{gather}\label{3.15.8}
\im M(\l)\geq \delta_\l I, \quad \l\in\C_{\b,+};  \qquad  \im M(\l)\leq \delta_\l' I, \quad \l\in\C_{\a,-}
\end{gather}
with some $\delta_\l>0$ and $\delta_\l'<0$ (depending on $\l$).
\end{enumerate}
\end{theorem}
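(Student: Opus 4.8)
The plan is to verify the three defining properties of a boundary triple for the dual pair $\{A,B\}$, to identify the distinguished extension $A_0$, and then to read off the Weyl-function identities directly from Green's identity \eqref{3.15.0} for $S^*$. By Lemma \ref{lem3.8.2} we have $\dom A^*=\dom B^*=\cD_*$, $B^*=S^*+iT$ and $A^*=S^*-iT$, so both maps in \eqref{3.15.4} coincide with $\G=(\G_0,\G_1)^\top$ on $\cD_*$; surjectivity of $\G^B$ and $\G^A$ is then inherited at once from that of $\G$, which is part of the hypothesis that $\wh\Pi$ is a boundary triple for $S^*$. For the Green identity \eqref{3.14} I would compute
\be
(B^*f,g)-(f,A^*g)=((S^*+iT)f,g)-(f,(S^*-iT)g)=(S^*f,g)-(f,S^*g),
\ee
the two $T$-terms cancelling because $T$ is bounded and self-adjoint, so that $i(Tf,g)-i(f,Tg)=0$. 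By \eqref{3.15.0} the right-hand side equals $(\G_1f,\G_0g)-(\G_0f,\G_1g)$, which is exactly \eqref{3.14} after substituting \eqref{3.15.4}; hence $\Pi$ is a boundary triple for $\{A,B\}$.

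Next I would identify $A_0=B^*\up\ker\G_0^B=(S^*+iT)\up\ker\G_0=S_0+iT$, where $S_0:=S^*\up\ker\G_0$ is the self-adjoint extension of $S$ determined by $\wh\Pi$, so that $A_0^*=S_0-iT$. For $f\in\dom A_0$ self-adjointness of $S_0$ gives $\im(A_0f,f)=(Tf,f)$, and $\a\|f\|^2\le(Tf,f)\le\b\|f\|^2$ yields the strip estimate in \eqref{3.15.5}. To obtain $(\C_{\a,-}\cup\C_{\b,+})\subset\rho(A_0)$ I would argue, for $\im\l>\b$, that
\be
\im((A_0-\l)f,f)=(Tf,f)-\im\l\,\|f\|^2\le(\b-\im\l)\|f\|^2<0,
\ee
so $\|(A_0-\l)f\|\,\|f\|\ge|\im((A_0-\l)f,f)|\ge(\im\l-\b)\|f\|^2$ and $A_0-\l$ is injective with closed range; an analogous estimate for $A_0^*=S_0-iT$ gives $\ker(A_0-\l)^*=\ker(A_0^*-\ov\l)=\{0\}$, whence $\ran(A_0-\l)=\gH$ and $\l\in\rho(A_0)$. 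The case $\im\l<\a$ is symmetric, using $T\ge\a I$.

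For part (ii) the core is \eqref{3.15.6}. Fixing $\l,\mu\in\rho(A_0)$ and $h,k\in\cH$ and setting $f_\l=\g(\l)h\in\gN_\l(B)$, $f_\mu=\g(\mu)k\in\gN_\mu(B)$ (so $\G_0f_\l=h$, $\G_1f_\l=M(\l)h$, $S^*f_\l=(\l-iT)f_\l$, and likewise for $\mu$), I would insert these into \eqref{3.15.0} and again use $T=T^*$ to obtain
\bea
(M(\l)h,k)-(h,M(\mu)k)&=&(S^*f_\l,f_\mu)-(f_\l,S^*f_\mu)\\
&=&(\l-\ov\mu)(f_\l,f_\mu)-2i(Tf_\l,f_\mu).
\eea
Writing $(f_\l,f_\mu)=(\g^*(\mu)\g(\l)h,k)$, $(Tf_\l,f_\mu)=(\g^*(\mu)T\g(\l)h,k)$ and $(h,M(\mu)k)=(M^*(\mu)h,k)$ then gives \eqref{3.15.6}; putting $\mu=\l$ and using $\l-\ov\l=2i\im\l$ together with $\im M(\l)=\tfrac1{2i}(M(\l)-M^*(\l))$ yields \eqref{3.15.7}.

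For \eqref{3.15.8} I would recall that by \eqref{2.11} the map $\G_0^B\up\gN_\l(B)$ is a topological isomorphism onto $\cH$, so $\g(\l)$ is bounded below and $\g^*(\l)\g(\l)\ge c_\l I$ for some $c_\l>0$. Then for $\im\l>\b$ the inequality $\im\l\cdot I_\gH-T\ge(\im\l-\b)I_\gH>0$ gives $\im M(\l)\ge(\im\l-\b)c_\l I=:\delta_\l I$, while for $\im\l<\a$ the inequality $\im\l\cdot I_\gH-T\le(\im\l-\a)I_\gH<0$ gives $\im M(\l)\le(\im\l-\a)c_\l I=:\delta_\l'I$. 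I expect the one genuinely delicate point to be the resolvent inclusion in part (i): the strip estimate alone only yields injectivity with closed range, and one must separately run the estimate for the adjoint $A_0^*=S_0-iT$ to exclude a non-trivial cokernel and so conclude surjectivity. Everything else reduces to Green's identity for $S^*$ together with the cancellation of the bounded self-adjoint term $T$.
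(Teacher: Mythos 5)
Your proof is correct and follows essentially the same route as the paper: the same cancellation of the bounded self-adjoint perturbation $T$ in Green's identity, the same identification $A_0=S_0+iT$, and the same computation of $(S^*f_\l,f_\mu)-(f_\l,S^*f_\mu)$ on defect elements to derive \eqref{3.15.6}--\eqref{3.15.8}. The only difference is one of detail: where the paper obtains \eqref{3.15.5} by invoking Lemma \ref{lem3.8.2} for $S_0$ with $n_\pm(S_0)=0$, you inline the standard numerical-range argument (lower bound for $A_0-\l$ plus injectivity of $A_0^*-\ov\l$), which is precisely the perturbation-theoretic content that lemma rests on.
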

\begin{proof}
(i) Since by \eqref{3.15.4} we have $\G^B=\G^A=\G$ and the mapping $\G$ is surjective,  so are the mappings $\G^B$ and $\G^A$. Moreover, using \eqref{3.15.3}, \eqref{3.15.4} and the identity \eqref{3.15.0} (for the triple $\wh \Pi$) one gets
       \begin{gather*}
(B^*f,g) - (f,A^*g)= (S^*f,g)+i(Tf,g)-(f,S^*g)-i(f,Tg)=(S^*f,g)- (f,S^*g)\\
=(\G_1 f, \G_0 g)- (\G_0 f, \G_1 g)=(\G_1^B f, \G_0^A g)- (\G_0^B f, \G_1^A g), \quad f,g\in\cD_*.
        \end{gather*}
Hence $\G^B$ and $\G^A$ satisfy \eqref{3.14} and, consequently,  $\Pi=\{\cH\oplus\cH, \G^B,\G^A\}$ is a boundary triple for $\{A,B\}$.

Next assume that $S_0=S^*\up \ker \G_0$ is a selfadjoint extension of $S$ corresponding to the triple $\wh \Pi$ for $S^*$. Then
       \begin{gather*}
A_0=B^*\up\ker\G_0^B=(S^*+iT)\up\ker\G_0=S_0+iT
       \end{gather*}
and Lemma \ref{lem3.8.2} with 
 $n_\pm(S_0)=0$  
 yields \eqref{3.15.5}.

(ii) Since $\gN_\l(B)=\{f\in\dom B^*: B^*f=\l f\}$, it follows from \eqref{3.15.3} that
         \begin{gather}\label{3.15.11}
\gN_\l(B)=\{f\in\cD_*: S^*f=\l f-iTf\}, \quad \l\in\C.
        \end{gather}
Assume that $h\in\cH, \; \l,\mu\in \rho (A_0)$ and
       \begin{gather*}
f_\l=\g(\l)h\in \gN_\l(B), \qquad g_\mu=\g(\mu)h\in\gN_\mu(B).
        \end{gather*}
Then by \eqref{3.15.11}
       \begin{gather*}
S^* f_\l=\l f_\l-i T f_\l=(\l I-iT)\g(\l)h, \qquad S^* g_\mu=\mu g_\mu-i T g_\mu=(\mu I-iT)\g(\mu)h
       \end{gather*}
and hence
     \begin{gather*}
(S^* f_\l,g_\mu)- (f_\l,S^* g_\mu)=((\l I-iT )\g(\l)h,\g(\mu)h ) -(\g(\l)h, (\mu I-iT)\g(\mu)h)=\\
=(\g^*(\mu)[(\l I-iT)-(\overline \mu I +i T)]\g(\l)h,h)  =(\g^*(\mu)[(\l-\overline \mu)I-2i T]\g(\l)h,h)
            \end{gather*}
On the other hand, by the  definition of $\g(\l)$ and $M(\l)$ one has
        \begin{gather*}
\G_0f_\l=\G_0^B f_\l= h, \quad \G_1f_\l=\G_1^B f_\l= M(\l)h, \quad \G_0 g_\mu=\G_0^B g_\mu= h, \quad \G_1 g_\mu=\G_1^B g_\mu= M(\mu)h
        \end{gather*}
and, consequently,
        \begin{gather*}
(\G_1f_\l,\G_0 g_\mu )-(\G_0f_\l, \G_1 g_\mu)  = (M(\l)h,h)-(h,M(\mu)h)=((M(\l)-M^*(\mu))h,h).
        \end{gather*}
Now an  application of the  identity \eqref{3.15.0} for the triple $\wh \Pi$ to $f_\l$ and $g_\mu$ yields
          \begin{gather*}
(\g^*(\mu)[(\l-\overline \mu)I-2i T]\g(\l)h,h)= ((M(\l)-M^*(\mu))h,h), \quad h\in \cH, \;\; \l,\mu\in \rho (A_0).
          \end{gather*}
This implies identity \eqref{3.15.6}, which in turn gives \eqref{3.15.7}. Finally, \eqref{3.15.8} directly follows from \eqref{3.15.7}.
\end{proof}

\subsection{The Weyl solution and the Weyl function for Dirac operators}
In this subsection  we assume that the Dirac type expression \eqref{3.1} is defined on the
half-line $\R_+$. First we prove the existence of the Weyl solution on $\R_+$.
\begin{theorem}\label{th3.9}
Suppose that $n=2p$ and the operator $J_n$ in \eqref{3.1} is of the form \eqref{3.11.0}.
Let $A:=\Dmi^+(Q)$ and $B:=\Dmi^+(Q^*)$, so that $A^*=\Dma^+(Q^*)$ and $B^*=\Dma^+ (Q)$.
Moreover, let $C_1,C_2\in \B (\C^p)$ be an admissible operator pair and let $\wt A$ be
the corresponding  quasi-selfadjoint extension of the form \eqref{3.11.2}. If $\rho (\wt
A)\not = \emptyset$, then the following hold:
%
%
  \begin{enumerate}\def\labelenumi{\rm (\roman{enumi})}
\item There exist a pair of operators
        \begin{equation}\label{3.17}
X=\begin{pmatrix}  C_1 & C_2 \cr C_3&C_4\end{pmatrix}:\C^p\oplus\C^p\to \C^p\oplus\C^p, \qquad Y=\begin{pmatrix}  C_1' & C_2' \cr C_3'&C_4'\end{pmatrix}:\C^p\oplus\C^p\to \C^p\oplus\C^p
        \end{equation}
such that $Y^* J_n X=J_n$ and for each such a pair  the equalities
        \begin{gather}
\G_0^B y= C_1y_1(0) + C_2y_2(0), \qquad   \G_1^B y= C_3y_1(0) + C_4y_2(0), \quad y\in\dom B^*,\label{3.18}\\
\G_0^A y= C_1'z_1(0) + C_2'z_2(0), \qquad   \G_1^A y= C_3'z_1(0) + C_4'z_2(0), \quad z\in\dom A^*,\label{3.19}\\
\G^B=(\G_0^B,\G_1^B)^\top, \qquad \G^A=(\G_0^A,\G_1^A)^\top, \label{3.20}
        \end{gather}
define a boundary triple $\Pi=\{\C^p\oplus\C^p,\G^B,\G^A\}$ for $\{A,B\}$ (in \eqref{3.18} and \eqref{3.19} $y_j$ and $z_j$ are taken from \eqref{3.11.1}). Moreover, for this triple $A_0(=B^*\up\ker\G_0^B)=\wt A$.

\item For each $\l\in \rho(\wt A)$
the homogeneous equation
        \begin{gather}\label{3.22}
D(Q)y=\l y
        \end{gather}
has a unique $L^2$-operator-valued  solution (the Weyl solution)
        \begin{gather}\label{3.21}
v(\cd,\l)=(v_1(\cd,\l), v_2(\cd,\l))^\top \in L^2(\R_+,\B (\C^p,\C^{2p}))
        \end{gather}
satisfying
%
        \begin{gather}\label{3.23}
C_1 v_1(0,\l)+ C_2 v_2(0,\l)=I_p, \quad \l\in \rho(\wt A).
        \end{gather}
Moreover, the solution $v(\cd,\l) $ is holomorphic in $\l\in \rho(\wt A)$.

\item  The Weyl function $M(\cd)$ of the triple $\Pi$ is
        \begin{gather}\label{3.23.1}
M(\l)=C_3 v_1(0,\l)+ C_4 v_2(0,\l), \quad \l\in \rho(\wt A).
        \end{gather}
If $\ff(\cd,\l)$ and $\psi (\cd,\l)$ are $\B (\C^p, \C^p\oplus\C^p)$-valued solutions of
\eqref{3.22} satisfying
        \begin{gather}\label{3.24}
X\ff(0,\l)= \begin{pmatrix}  0\cr I_p \end{pmatrix}, \qquad X\psi(0,\l)= \begin{pmatrix}  I_p \cr 0 \end{pmatrix},
        \end{gather}
then the Weyl function $M(\cd)$ can also be defined as the unique operator-valued function
$M(\cd):\rho (\wt A)\to \B (\C^p)$ such that
        \begin{gather}\label{3.24.1}
\ff(\cd,\l)M(\l)+\psi (\cd,\l)\in L^2(\R_+,\B (\C^p,\C^n)), \quad \l\in\rho (\wt A).
        \end{gather}
\item
If $\l\in\rho (\wt A)$ and $\wh v(\cd,\l)=(\wh v_1(\cd,\l), \wh v_2(\cd,\l))^\top \in L^2(\R_+,\B (\C^p,\C^{2p}))$ is an operator solution of \eqref{3.22}, then the operator $C_1 v_1(0,\l)+C_2 v_2(0,\l)$ is invertible and
       \begin{gather*}
M(\l)=(C_3 v_1(0,\l)+C_4 v_2(0,\l))(C_1 v_1(0,\l)+C_2 v_2(0,\l))^{-1}.
      \end{gather*}
\end{enumerate}
\end{theorem}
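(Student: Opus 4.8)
The plan is to realise everything through a single boundary triple $\Pi=\{\C^p\oplus\C^p,\G^B,\G^A\}$ for the dual pair $\{A,B\}$, built from the boundary data $y\mapsto(y_1(0),y_2(0))^\top$, and then to read off parts (ii)--(iv) as statements about its $\g$-field and Weyl function. For (i), given the admissible pair $(C_1,C_2)$ I first complete it to a matrix $X$ as in \eqref{3.17} whose top block is $(C_1,C_2)$; the constraint $Y^*J_nX=J_n$ with $J_n$ invertible forces $X$ (hence also $Y$) to be invertible, so such pairs exist (take any invertible completion $X$ and set $Y^*:=J_nX^{-1}J_n^{-1}$), and conversely every such pair automatically has $X,Y$ invertible. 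With $\G^By=X(y_1(0),y_2(0))^\top$ and $\G^Az=Y(z_1(0),z_2(0))^\top$ as in \eqref{3.18}--\eqref{3.20}, surjectivity of $\G^B,\G^A$ follows from invertibility of $X,Y$ together with Lemma \ref{lem3.2}, which makes $y\mapsto(y_1(0),y_2(0))^\top$ a surjection of $\dom B^*=\dom\Dma^+(Q)$ onto $\C^{2p}$. The Green identity \eqref{3.14} is then obtained from the concrete Lagrange identity \eqref{3.2}: a short computation pairing the boundary vectors gives $(\G_1^By,\G_0^Az)-(\G_0^By,\G_1^Az)=-(Y^*J_nX\,y(0),z(0))$, which coincides with $-(J_ny(0),z(0))$ exactly because $Y^*J_nX=J_n$. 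Finally $\ker\G_0^B=\{y\in\dom B^*:C_1y_1(0)+C_2y_2(0)=0\}=\dom\wt A$ by \eqref{3.11.2}, so $A_0=B^*\up\ker\G_0^B=\wt A$.

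For (ii) and (iii) I invoke the general boundary-triple machinery recalled after Definition \ref{def3.8}. For $\l\in\rho(\wt A)=\rho(A_0)$ the defect subspace $\gN_\l(B)=\ker(\Dma^+(Q)-\l)$ is exactly the space of $L^2$-solutions of \eqref{3.22}, and by Corollary \ref{cor3.6} it has dimension $p$; moreover $\G_0^B\up\gN_\l(B)$ is a topological isomorphism onto $\C^p$. I then define the Weyl solution $v(\cdot,\l)$ as the operator realisation of the $\g$-field $\g(\l)=(\G_0^B\up\gN_\l(B))^{-1}$, so that $v(\cdot,\l)\in L^2(\R_+,\B(\C^p,\C^{2p}))$, each column solves \eqref{3.22}, and $\G_0^Bv(\cdot,\l)=C_1v_1(0,\l)+C_2v_2(0,\l)=I_p$, i.e. \eqref{3.23}. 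Uniqueness is immediate: any $L^2$-solution obeying \eqref{3.23} has its columns in $\gN_\l(B)$ with the same image under the injective map $\G_0^B\up\gN_\l(B)$, hence equals $v(\cdot,\l)$; holomorphy in $\l$ is the holomorphy of the $\g$-field. The formula \eqref{3.23.1} is just $M(\l)=\G_1^Bv(\cdot,\l)=C_3v_1(0,\l)+C_4v_2(0,\l)$ from Definition \ref{def3.8}. For the $\ff,\psi$ characterisation I note that $X\ff(0,\l)=(0,I_p)^\top$ and $X\psi(0,\l)=(I_p,0)^\top$ give $X[\psi(0,\l)+\ff(0,\l)M(\l)]=(I_p,M(\l))^\top=Xv(0,\l)$; invertibility of $X$ and ODE uniqueness then force $\ff(\cdot,\l)M(\l)+\psi(\cdot,\l)=v(\cdot,\l)\in L^2$, giving \eqref{3.24.1}, while uniqueness of such an $M$ follows because $\ff(\cdot,\l)D\in L^2$ with $D=M'-M$ lies in $\gN_\l(B)$ with $\G_0^B(\ff(\cdot,\l)D)=0$, whence $\ff(0,\l)D=0$ and $D=0$ since $\ff(0,\l)$ has full column rank.

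For (iv) I read the statement for a solution $\wh v(\cdot,\l)\in L^2(\R_+,\B(\C^p,\C^{2p}))$ whose $p$ columns form a basis of the $p$-dimensional space $\gN_\l(B)$. Since each column lies in $\gN_\l(B)$, the defining relation $\G_1^B\up\gN_\l(B)=M(\l)\,(\G_0^B\up\gN_\l(B))$ from \eqref{3.15}, applied columnwise, yields $C_3\wh v_1(0,\l)+C_4\wh v_2(0,\l)=M(\l)\bigl(C_1\wh v_1(0,\l)+C_2\wh v_2(0,\l)\bigr)$. The matrix $C_1\wh v_1(0,\l)+C_2\wh v_2(0,\l)=\G_0^B\wh v(\cdot,\l)$ is invertible: a nonzero kernel vector would produce a nonzero element of $\gN_\l(B)$ annihilated by the injective map $\G_0^B\up\gN_\l(B)$, contradicting independence of the columns of $\wh v$. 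Multiplying on the right by its inverse gives the asserted formula for $M(\l)$.

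The main obstacle is the bookkeeping in (i): arranging $X,Y$ so that simultaneously the abstract Green identity \eqref{3.14} matches the concrete Lagrange identity \eqref{3.2} (which is precisely the relation $Y^*J_nX=J_n$), the boundary maps stay surjective, and $A_0$ equals the prescribed extension $\wt A$. Once the triple is in place, parts (ii)--(iv) are essentially translations of the $\g$-field and Weyl-function formalism; the only point needing care is the interpretation in (iv) of ``operator solution'' as a fundamental system of $L^2$-solutions, which is exactly what makes $C_1\wh v_1(0,\l)+C_2\wh v_2(0,\l)$ invertible.
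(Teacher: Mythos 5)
Your proof is correct and follows essentially the same route as the paper: transform the canonical boundary maps $y\mapsto(y_1(0),y_2(0))^\top$ by matrices $X,Y$ with $Y^*J_nX=J_n$, identify $A_0=\wt A$ via $\ker\G_0^B$, and then read off (ii)--(iv) from the $\g$-field and Weyl function of the resulting triple. The only difference is that you verify the boundary-triple property of the transformed maps directly from the Lagrange identity (and spell out the uniqueness in (iii) and the invertibility in (iv)), where the paper delegates the transformation step to \cite[Proposition 4.6]{MM02} and calls (iv) immediate.
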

\begin{proof}
{\rm (i)} Let $\wt \G^B=(\wt \G_0^B, \wt \G_1^B)^\top$ and $\wt \G^A=(\wt \G_0^A, \wt \G_1^A)^\top$ be  operators defined by
        \begin{gather}
\wt \G_0^B y=y_1(0), \quad  \wt \G_1^B y=y_2(0), \quad y\in\dom B^*,\label{3.25.1}\\
\wt \G_0^A z=z_1(0), \quad  \wt \G_1^A z=z_2(0), \quad z\in\dom A^*.\label{3.25.2}
        \end{gather}
It follows from \eqref{3.11.0} and the Lagrange identity \eqref{3.2} that these operators satisfy \eqref{3.14}. Moreover, by Lemma \ref{lem3.2} the operators $\G^B$ and $\G^A$ are surjective. Therefore the collection $\wt\Pi=\{\C^p\oplus\C^p,\wt\G^B,\wt\G^A\}$ is a boundary triple for $\{A,B\}$.

Since the pair $C_1,C_2$ is admissible, there exist operators $C_3,C_4 \in \B (\C^p)$ and $C_j'\in \B (\C^p), \; j\in \{1,\dots,4\},$ such that the equalities \eqref{3.17} define invertible operators $X$ and $Y$ satisfying $Y^* J X=J$. Moreover,  according to \cite[Proposition 4.6]{MM02} for each such  pair the equalities
        \begin{gather*}
\G^B= \begin{pmatrix} \G_0^B\cr \G_1^B    \end{pmatrix}:= \begin{pmatrix} C_1&C_2\cr C_3&C_4\end{pmatrix}\begin{pmatrix} \wt \G_0^B\cr \wt\G_1^B    \end{pmatrix}, \quad \G^A= \begin{pmatrix} \G_0^A\cr \G_1^A    \end{pmatrix}:= \begin{pmatrix} C_1'&C_2'\cr C_3'&C_4'\end{pmatrix}\begin{pmatrix} \wt \G_0^A\cr \wt\G_1^A    \end{pmatrix}
         \end{gather*}
define a boundary triple $\Pi=\{\C^p\oplus\C^p,\G^B,\G^A\}$ for the dual pair $\{A,B\}$
and in view of \eqref{3.25.1}, \eqref{3.25.2}, the mappings  $\G^B$ and $\G^A$ are of
the form \eqref{3.18} -- \eqref{3.20}. Moreover, the equality $A_0=\wt A$ is implied by
combining \eqref{3.11.2}  with  \eqref{3.18}.

{\rm (ii)} Let $\g(\cd) $ be the $\g$-field \eqref{3.15}  of the triple $\Pi$. Since
$\gN_\l (B)$ coincides with the space of all $L^2(\R_+,\C^n)$-solutions  of equation
\eqref{3.22}, the equality
        \begin{gather}\label{3.26}
v(t,\l)h=(\g(\l)h)(t), \qquad h\in\C^p,\quad  \l\in\rho (\wt A)
        \end{gather}
defines a unique operator solution $v(t,\l)(\in  \B (\C^p,\C^n))$ of \eqref{3.22} such
that $v(\cd,\l)h\in L^2(\R_+,\C^n)$ and
        \begin{gather}\label{3.27}
\G_0^B v(\cd,\l)h=h, \quad h\in\C^p,\;\;\l\in\rho (\wt A).
        \end{gather}
Moreover, it follows from \eqref{3.18} and the block representation \eqref{3.21} of
$v(\cd,\l)$ that \eqref{3.27} is equivalent to \eqref{3.23}. The holomorphy of
$v(\cd,\l)$ in $\rho (\wt A)$ is immediate  from \eqref{3.26} since the $\g$-field
$\g(\cd)$ is holomorphic on $\rho (\wt A)$.

{\rm (iii)} Recall that  in accordance with  \eqref{3.15}, $M(\l)=\G_1^B \g(\l)$.
Therefore  it follows from \eqref{3.26} and \eqref{3.18} that \eqref{3.23.1} holds.
Next, combining \eqref{3.23}  with  \eqref{3.23.1} yields
        \begin{gather*}
Xv(0,\l)= \begin{pmatrix} I_p\cr M(\l)   \end{pmatrix}=X(\ff(0,\l)M(\l)+\psi (0,\l)).
        \end{gather*}
Since $X$ is invertible, one gets  $v(t,\l)=\ff(t,\l)M(\l)+\psi (t,\l)$. This proves the
second statement in  (iii).

(iv)  This statement  is immediate by combining  (ii) with  (iii).
\end{proof}
The following theorem is an extended version
of Theorem \ref{Th1.2_Intro} from the Introduction.  In particular, we prove here the
existence of a holomorphic Weyl solution for  a $j$-symmetric Dirac type operator on the
half-line.
  \begin{theorem}\label{th3.9.1}
Let $n=2p$, let $D(Q)$ be   the Dirac type expression \eqref{3.1} on $\R_+$ with $J_n$ and $Q(\cd)$ given by \eqref{3.1.5}, let   the relations \eqref{3.1.6} be fulfilled and let $j$ be the conjugation in $L^2(\R_+,\C^n)$ given by \eqref{3.1.4}. If $\wh \rho (\Dmi^+(Q))\neq\emptyset$, then:
   \begin{enumerate}\def\labelenumi{\rm (\roman{enumi})}
\item
The minimal operator $\Dmi^+(Q)$ is $j$-symmetric and ${\rm Def}(\Dmi^+(Q))=p,$ i.e., for each $\l\in\wh \rho (\Dmi^+(Q))$ (or, equivalently, for each $\l\in\wh\rho \{\Dmi^+(Q), \Dmi^+(Q^*)\}$) the following holds:
        \begin{gather}\label{3.28}
\dim\gN_\l(\Dmi^+(Q^*))=\dim\gN_{\ov\l}(\Dmi^+ (Q))= p.
         \end{gather}
\item
There exists an operator $C\in\B (\C^p,\C^n)$ such that the equalities
 \begin{equation*}
\wt A=\Dma^+ (Q) \upharpoonright \dom\wt A, \quad  \dom \wt A =\{y\in\dom\Dma^+ (Q): C
y=0\},
  \end{equation*}
define  a $j$-selfadjoint extension $\wt A$ of $\Dmi^+(Q)$ with $\rho (\wt
A)\neq\emptyset$ and for each $\l\in \rho(\wt A)$ there exists a unique operator-valued
solution  $\Psi_+(\cdot,\l) \in L^2(\R_+,\B (\C^p,\C^n))$  (the Weyl solution) of
\eqref{3.22} satisfying $C\Psi_+(0,\l) = I_p$. Moreover, the solution $\Psi_+(\cd,\l)$
is holomorphic in $ \rho (\wt A)$.
\end{enumerate}
\end{theorem}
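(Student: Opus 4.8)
The plan is to treat $A:=\Dmi^+(Q)$ and $B:=\Dmi^+(Q^*)$ as a dual pair and to reduce everything to the quasi-selfadjoint machinery of Sections \ref{sect1} and \ref{Dirac}. First I would record that, by Proposition \ref{pr3.0.1}, the hypotheses \eqref{3.1.6} guarantee that $A$ is $j$-symmetric and that $B=jAj$, so that $\{A,B\}$ is a dual pair with $A^*=\Dma^+(Q^*)$ and $B^*=\Dma^+(Q)$. Since $\widehat\rho(A)\neq\eset$, I fix $\l_0\in\widehat\rho(A)$ and invoke Proposition \ref{prop_gap_preserving} to produce a \emph{$j$-selfadjoint} extension $\wt A$ of $A$ that preserves the gap, so that $\l_0\in\rho(\wt A)$ and in particular $\rho(\wt A)\neq\eset$. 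By Proposition \ref{propRacecomplement}, this $\wt A$ is automatically a quasi-selfadjoint extension of $\{A,B\}$. This single extension $\wt A$ will serve for both parts of the theorem.

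For part (i), since $n=2p$ and $\wt A$ is a quasi-selfadjoint extension with nonempty resolvent set, the converse statement of Corollary \ref{cor3.6} applies and yields $\dim\gN_{\ov\l}(A)=\dim\gN_\l(B)=p$ for every $\l\in\rho(\wt A)$. To pass from $\rho(\wt A)$ to all of $\widehat\rho(A)$, I would use Corollary \ref{cor_def_number}, which shows $\widehat\rho(A)=\widehat\rho\{A,B\}$ and that $\dim\gN_{\ov\l}(A)=\dim\gN_\l(B)=\text{Def}(A)$ is constant on $\widehat\rho\{A,B\}$. Together these give \eqref{3.28}, so this part is a bookkeeping consequence of the earlier results.

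For part (ii) I would first put $\wt A$ into boundary-condition form. As $\wt A$ is quasi-selfadjoint and $n=2p$, Theorem \ref{th3.3}(iii) gives $\wt A=D_\th^+(Q)$ for a subspace $\th\subset\C^{2p}$ with $\dim\th=p$; choosing $C$ with $\ker C=\th$ rewrites $\dom\wt A=\{y\in\dom\Dma^+(Q):Cy(0)=0\}$, as required in the statement. For the Weyl solution, I fix $\l\in\rho(\wt A)$ and note that $\gN_\l(B)=\ker(\Dma^+(Q)-\l)$ is exactly the space of $L^2$-solutions of \eqref{3.22}, of dimension $p$ by part (i). The key step is that the linear map $\gN_\l(B)\ni v\mapsto Cv(0)\in\C^p$ is a bijection: if $Cv(0)=0$ then $v(0)\in\th$, so $v\in\dom\wt A$ with $(\wt A-\l)v=B^*v-\l v=0$, whence $v=0$ since $\l\in\rho(\wt A)$; injectivity together with equality of dimensions gives bijectivity. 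Defining $\Psi_+(\cdot,\l)$ to be the inverse of this map then produces the unique operator solution with $C\Psi_+(0,\l)=I_p$.

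It remains to prove holomorphy of $\Psi_+(\cdot,\l)$ in $\l$, and this is where the real work lies. I would recognise $\Psi_+(\cdot,\l)=(\G_0^B\up\gN_\l(B))^{-1}$ as the $\g$-field of a boundary triple $\Pi=\{\C^p\oplus\C^p,\G^B,\G^A\}$ for $\{A,B\}$ with $\G_0^B y=Cy(0)$ and $A_0=\wt A$; such $\g$-fields are holomorphic on $\rho(A_0)=\rho(\wt A)$ (see the discussion after Definition \ref{def3.8}). The main obstacle is the construction of this boundary triple, because Theorem \ref{th3.9} supplies it only for $J_n$ in the off-diagonal form \eqref{3.11.0}, whereas here $J_n$ is in the diagonal form \eqref{3.1.5}. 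I see two ways to bridge this gap. The direct route is to verify the Green identity straight from the Lagrange identity \eqref{3.2} for the diagonal $J_n$ and to complete $\G_0^B$ to a surjective pair $\G^B$ together with $\G^A$; this amounts to the finite-dimensional symplectic linear algebra identifying $\th$ as a maximal isotropic subspace compatible with the $j$-selfadjointness of $\wt A$. The alternative, which I find cleaner, is to apply a unitary $W\in\B(\C^n)$ with $WJ_nW^*$ equal to the off-diagonal matrix \eqref{3.11.0}; conjugating the expression by the induced multiplication operator turns $A$ into a Dirac operator $\Dmi^+(WQW^*)$ covered by Theorem \ref{th3.9} and $\wt A$ into a quasi-selfadjoint extension with nonempty resolvent set, hence (by Corollary \ref{cor3.5}) one given by an admissible pair $(C_1,C_2)$. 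Theorem \ref{th3.9}(ii) then furnishes a holomorphic Weyl solution for the transformed problem, and transforming back by $W^*$ returns the desired holomorphic $\Psi_+(\cdot,\l)$; the only bookkeeping is to match the normalisation $C_1v_1(0,\l)+C_2v_2(0,\l)=I_p$ with $C\Psi_+(0,\l)=I_p$. Either way, the crux is transferring the boundary-triple and $\g$-field holomorphy from the off-diagonal normal form back to the diagonal $J_n$ of the statement.
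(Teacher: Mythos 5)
Your proposal is correct and follows the paper's strategy quite closely, with two points of divergence worth noting. For part (i) the paper argues more directly: it writes $j(\Dmi^+(Q))^*j=\Dma^+(Q)$ and then computes $2\,{\rm Def}(\Dmi^+(Q))=\dim\bigl(\dom (j(\Dmi^+(Q))^*j)/\dom \Dmi^+(Q)\bigr)=\dim\bigl(\dom\Dma^+(Q)/\dom\Dmi^+(Q)\bigr)=n=2p$ using \eqref{2.55} and Corollary \ref{cor3.3.0}, whereas you route through the gap-preserving $j$-selfadjoint extension, Proposition \ref{propRacecomplement} and the converse part of Corollary \ref{cor3.6}, then invoke Corollary \ref{cor_def_number} for constancy on $\wh\rho\{A,B\}$; both arguments rest on the same underlying lemmas (in particular \eqref{2.55} itself is derived from Proposition \ref{prop_gap_preserving}), so this is a cosmetic rerouting. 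For part (ii) the paper's proof is exactly your skeleton --- Proposition \ref{prop_gap_preserving} produces a $j$-selfadjoint $\wt A$ with nonempty resolvent set, this is quasi-selfadjoint, and then Corollary \ref{cor3.5} and Theorem \ref{th3.9}(ii) are cited --- but the paper does not address the fact that Theorem \ref{th3.9} is stated only for $J_n$ in the off-diagonal normal form \eqref{3.11.0}, while Theorem \ref{th3.9.1} assumes the diagonal $J_n$ of \eqref{3.1.5}. Your explicit bijectivity argument for $v\mapsto Cv(0)$ on $\gN_\l(B)$ and your unitary-conjugation reduction to the off-diagonal form (mirroring what the paper itself does in the proof of Corollary \ref{cor3.16}) supply precisely the step the published proof leaves implicit, so on this point your write-up is the more complete of the two.
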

\begin{proof}
(i)  As in Proposition \ref{pr3.0.1} (see formula \eqref{3.1.7})  the operator $\Dmi^+(Q)$ is
$j$-symmetric and
        \begin{gather*}
j(\Dmi^+ (Q))^*j = j\Dma^+(Q^*)j=\Dma^+(Q).
         \end{gather*}
Combining this formula with   \eqref{2.55}  and Corollary \ref{cor3.3.0} (see formula
\eqref{3.11a}) gives
  \begin{eqnarray}
2 \text{Def} D_{\min}^+(Q) = \dim (\dom (jD_{\min }^+(Q)^*j)/\dom D_{\min}^+(Q)) \nonumber \\
= \dim (\dom D_{\max}^+(Q)/\dom D_{\min}^+(Q)) = n = 2p.
 \end{eqnarray}
%
%
This  proves the statement.

(ii) According to Proposition \ref{prop_gap_preserving} there exists a $j$-selfadjoint
extension $\wt A$ of $\Dmi^+(Q)$ such that $\rho (\wt A)\neq\emptyset$. Since  $\wt A$ is a quasi-selfadjoint extension, the statement (ii) follows from Corollary \ref{cor3.5} and Theorem~\ref{th3.9}(ii).
     \end{proof}
\begin{corollary}
Assume the conditions of  Theorem \ref{th3.9.1}. Let  $\l_0\in\wh\rho (\Dmi^+(Q))$ and let
$\D(\l_0;\varepsilon)$ be  a gap for the operator $\Dmi^+(Q)$.  Then there is an operator
$C\in\B (\C^p,\C^n)$ such that the corresponding  Weyl solution $\Psi_+(\cd,\l)$ is
holomorphic in  $\D(\l_0;\varepsilon)$.
\end{corollary}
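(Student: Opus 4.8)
The plan is to produce the boundary operator $C$ as the one encoding a \emph{gap-preserving} $j$-selfadjoint extension, and then to re-run the construction of Theorem~\ref{th3.9.1}(ii) for that particular extension. By hypothesis $\D(\l_0;\varepsilon)$ is a gap for $A:=\Dmi^+(Q)$, which means precisely that the lower bound $\|Af-\l_0 f\|\ge\varepsilon\|f\|$ holds for all $f\in\dom A$; this is exactly the assumption \eqref{2+} of Proposition~\ref{prop_gap_preserving}. Since $A$ is $j$-symmetric by Theorem~\ref{th3.9.1}(i), Proposition~\ref{prop_gap_preserving} applies and furnishes a $j$-selfadjoint extension $\wt A$ of $A$ obeying \eqref{3+} on $\dom\wt A$; in particular $\D(\l_0;\varepsilon)\subset\rho(\wt A)$.

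Next I would feed this specific $\wt A$ into the argument of Theorem~\ref{th3.9.1}(ii). By Proposition~\ref{propRacecomplement} the $j$-selfadjoint extension $\wt A$ is a quasi-selfadjoint extension of the dual pair $\{\Dmi^+(Q),\Dmi^+(Q^*)\}$; hence, exactly as in Theorem~\ref{th3.9.1}(ii), Corollary~\ref{cor3.5} associates with $\wt A$ an admissible pair $C_1,C_2\in\B(\C^p)$, and thereby an operator $C\in\B(\C^p,\C^n)$, such that $\dom\wt A=\{y\in\dom\Dma^+(Q):Cy=0\}$. For this extension Theorem~\ref{th3.9}(ii) then yields, for every $\l\in\rho(\wt A)$, the unique Weyl solution $\Psi_+(\cd,\l)\in L^2(\R_+,\B(\C^p,\C^n))$ of \eqref{3.22} normalised by $C\Psi_+(0,\l)=I_p$, and asserts that $\Psi_+(\cd,\l)$ is holomorphic on all of $\rho(\wt A)$; this holomorphy is inherited from the $\g$-field via the identification \eqref{3.26}, $v(\cd,\l)h=(\g(\l)h)(\cd)$, since $\g(\cd)$ is holomorphic on $\rho(A_0)=\rho(\wt A)$.

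To conclude, I would simply observe that $\D(\l_0;\varepsilon)\subset\rho(\wt A)$, so the restriction of $\Psi_+(\cd,\l)$ to the gap is holomorphic there, which is the assertion. The argument is thus a bookkeeping combination of results already established, and I do not anticipate a genuine analytic obstacle. The only points requiring attention are, first, that the gap-preserving conclusion of Proposition~\ref{prop_gap_preserving} is used in its full strength (the inclusion $\D(\l_0;\varepsilon)\subset\rho(\wt A)$, rather than merely $\rho(\wt A)\neq\emptyset$ as in Theorem~\ref{th3.9.1}(ii)); and second, the compatibility of the two normal forms \eqref{3.1.5} and \eqref{3.11.0} of $J_n$ between Theorems~\ref{th3.9.1} and~\ref{th3.9}. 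As in the proof of Theorem~\ref{th3.9.1}(ii), I would handle the latter through the unitary equivalence of the two signature matrices, a change of variables that leaves the gap $\D(\l_0;\varepsilon)$ and the holomorphy of the $\g$-field untouched.
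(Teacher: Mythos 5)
Your proposal is correct and follows essentially the same route as the paper: the paper's proof likewise invokes Proposition \ref{prop_gap_preserving} to obtain a gap-preserving $j$-selfadjoint extension $\wt A$ with $\D(\l_0;\varepsilon)\subset\rho(\wt A)$ and then repeats the reasoning of Theorem \ref{th3.9.1}(ii) for that extension. Your write-up merely makes explicit the intermediate steps (Proposition \ref{propRacecomplement}, Corollary \ref{cor3.5}, Theorem \ref{th3.9}(ii)) that the paper compresses into one sentence.
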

  \begin{proof}
According to Proposition \ref{prop_gap_preserving} there exists a $j$-selfadjoint
extension $\wt A$ of $\Dmi^+(Q)$ preserving the gap $\D(\l_0;\varepsilon)$. Starting with
this extension it remains to repeat the reasoning of Theorem \ref{th3.9.1}(ii).
   \end{proof}

\begin{remark}\label{rem3.6.2}
\begin{enumerate}\def\labelenumi{\rm (\roman{enumi})}

\item  In the scalar case $(p=1)$ and a Dirac type expression $D(Q)$ on $\R_+$ with
$Q(\cdot)$ of the form \eqref{3.1.10} and arbitrary   scalar $q(\cdot)$, Theorem
\ref{th3.9.1} was proved by another method in \cite{CGHL04} (see also \cite{CG06}). Note
also the paper \cite{Kla10}, where equality \eqref{3.28} (without a connection with the
Weyl type solutions) was proved for a Dirac type expression \eqref{3.1} with a special
off-diagonal potential matrix $Q(\cd)$ such that \eqref{3.1.7} is satisfied.

\item   Clearly, a  similar result is valid for  the  Dirac system  on $\R_-$ with a
potential matrix  $Q\in L_{\rm loc}^1(\R_-,\C^{n\times n})$, i.e. there exists the
unique $\B(\C^p,\C^n)$-valued Weyl solution $\Psi_-(\cdot,\l) \in L^2(\R_-,\B
(\C^p,\C^n))$ .

Note also, that    in the scalar case $(p=1)$ for a Dirac type  operator  on the line with
a potential matrix  $Q (\in L_{\rm loc}^1(\R,\C^{2\times 2}))$ of the form
\eqref{3.1.10} a  stronger  result  is proved in   \cite[Theorem 5.4]{CGHL04}, namely 
  \begin{equation}
\sup_{x\in \R}\left[\left(\int^x_{-\infty}\|\Psi_-(t,z)\|^2_{\mathbb{C}^2} \right) \cdot
\left(\int^{\infty}_x\|\Psi_+(t,z)\|^2_{\mathbb{C}^2}\,dx\right)\right] < \infty.
 \end{equation}
 \end{enumerate}
\end{remark}

\subsection{The Weyl  function of  almost formally selfadjoint Dirac type  operators  }

Clearly, the Dirac type expression \eqref{3.1} admits the representation
        \begin{gather}\label{3.30}
D(Q)y= J_{n} \frac{dy}{dx} + Q_1(x)y+i Q_2(x)y,
        \end{gather}
where $Q_1(x)=\overline{Q_1(x)}:={\rm Re}\, Q(x) $ and $Q_2(x)=\overline{Q_2(x)}:=\im
Q(x), \; x\in\R_+ $.
   \begin{definition}\label{def3.10}
The Dirac type expression \eqref{3.30} will be called almost  formally selfadjoint if the operator-function
$Q_2(\cd)$ is bounded, that is $||Q_2(x)||\leq C, \; x\in\R_+,$ with some $C\in\R$.
  \end{definition}
Let the  expression \eqref{3.30} be almost selfadjoint and let $\a (x)=\inf\limits_{h\in\C^n}
\frac {(Q_2(x)h,h)}{||h||^2}$ and  $\b(x)=\sup\limits_{h\in\C^n} \frac
{(Q_2(x)h,h)}{||h||^2}$ be lower and upper bounds   respectively of $Q_2(x)$. In the following we
denote by $\a_Q$ and $\b_Q$   the  essential infimum and supremum of $Q_2(x)$ respectively:
        \begin{gather*}
\a_Q={\rm ess} \inf_{x\in\R_+}Q_2(x), \qquad  \b_Q={\rm ess} \sup_{x\in\R_+}Q_2(x).
        \end{gather*}
  \begin{proposition}\label{pr3.11}
Let expression \eqref{3.30} on $\R_+$ be almost formally selfadjoint, let $\kappa_\pm=\dim \ker (J_n\pm i
I_n)$ and let $A:=\Dmi^+(Q), \; B:=\Dmi^+(Q^*)$. Then $\dom A=\dom B=:\cD$, $\dom A^*=\dom
B^*$ and
        \begin{gather*}
\a_Q ||f||^2\leq \im (Af,f)\leq  \b_Q ||f||^2, \qquad - \b_Q ||f||^2\leq\im (Bf,f)\leq -\a_Q ||f||^2, \quad f\in\cD.
        \end{gather*}
Moreover, $(\C_{\a_Q,-}\cup \C_{\b_Q,+})\subset \wh \rho \{A,B\}$ and
\begin{gather*}
\dim \gN_\l(B)=\kappa_+, \qquad \dim \gN_{\overline\l}(A) =\kappa_-,\quad \l\in\C_{\b_Q,+},\\
\dim \gN_\l(B)=\kappa_-, \qquad \dim \gN_{\overline\l}(A) =\kappa_+,\quad \l\in\C_{\a_Q,-}.
 \end{gather*}
\end{proposition}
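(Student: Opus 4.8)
The plan is to recognize Proposition \ref{pr3.11} as a direct specialization of the abstract Lemma \ref{lem3.8.2}. Write $Q=Q_1+iQ_2$ as in \eqref{3.30}, with $Q_1=\re Q$ and $Q_2=\im Q$ both Hermitian almost everywhere and $Q_2$ bounded. I set $S:=\Dmi^+(Q_1)$ and take $T\in\B(\gH)$ to be the operator of multiplication by the matrix-function $Q_2(\cdot)$ on $\gH=L^2(\R_+,\C^n)$. Since $Q_1=Q_1^*$, the expression $J_ny'+Q_1y$ is formally selfadjoint, so $S$ is a closed densely defined symmetric operator; since $Q_2(x)=Q_2(x)^*$ and $\|Q_2(x)\|\le C$, the operator $T$ is bounded and selfadjoint.

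First I would verify the two identifications $A=S+iT$ and $B=S-iT$. The preminimal operators $(\Dmi^+)'(Q)$ and $(\Dmi^+)'(Q_1)$ share a common domain (fixed by the regularity and compact-support conditions and independent of the potential), and on that domain $(\Dmi^+)'(Q)=(\Dmi^+)'(Q_1)+iT$. Passing to closures and using that $iT$ is a bounded everywhere-defined operator, so that $\ov{C+iT}=\ov C+iT$ for any closable $C$, yields $A=\Dmi^+(Q)=S+iT$; the same argument applied to $Q^*=Q_1-iQ_2$ gives $B=\Dmi^+(Q^*)=S-iT$. This is exactly the hypothesis $A=S+iT$, $B=S-iT$ of Lemma \ref{lem3.8.2}.

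It then remains to match the two numerical inputs required by that lemma. The lower and upper bounds of the multiplication operator $T$, namely $\inf_f (Tf,f)/\|f\|^2$ and $\sup_f (Tf,f)/\|f\|^2$, coincide with $\alpha_Q$ and $\beta_Q$ by the standard spectral description of multiplication operators (the infimum of the Rayleigh quotient over $f$ equals the essential infimum over $x$ of the pointwise bound $\alpha(x)$, and dually for the supremum). The deficiency indices of $S$ are supplied by the limit-point formula \eqref{Intro1.3_Def_indic_Semiax} for the formally selfadjoint half-line expression $J_ny'+Q_1y$ (note $Q_1=Q-iQ_2\in L^1_{\loc}(\R_+,\B(\C^n))$, so the cited result applies), which gives $n_+(S)=\kappa_+$ and $n_-(S)=\kappa_-$.

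With these identifications the conclusion is immediate. Lemma \ref{lem3.8.2}(i) and (iii) yield $\dom A=\dom B=\cD$, $\dom A^*=\dom B^*$, and the two-sided bounds on $\im(Af,f)$ and $\im(Bf,f)$; Lemma \ref{lem3.8.2}(ii) gives $(\C_{\alpha_Q,-}\cup\C_{\beta_Q,+})\subset\wh\rho\{A,B\}$ together with all four dimension formulas, after substituting $n_+(S)=\kappa_+$, $n_-(S)=\kappa_-$, $\alpha=\alpha_Q$ and $\beta=\beta_Q$. The only point demanding genuine care is the passage to closures in the reduction $A=S+iT$, that is, confirming that the bounded perturbation $iT$ leaves the minimal domain unchanged and commutes with closure; the substantial analytic content, namely the stability of the deficiency subspaces $\gN_\l(B)$ and $\gN_{\ov\l}(A)$ throughout the half-planes $\C_{\beta_Q,+}$ and $\C_{\alpha_Q,-}$, is already encapsulated in Lemma \ref{lem3.8.2}, so I anticipate no further obstacle.
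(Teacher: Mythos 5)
Your proof follows exactly the paper's route: set $S=\Dmi^+(Q_1)$, let $T$ be multiplication by $Q_2$, identify $A=S+iT$ and $B=S-iT$, note $n_\pm(S)=\kappa_\pm$ via the limit-point result \eqref{Intro1.3_Def_indic_Semiax} and that the bounds of $T$ are $\a_Q,\b_Q$, and then apply Lemma \ref{lem3.8.2}. The only difference is that you spell out the closure argument $\ov{C+iT}=\ov{C}+iT$ for the identification $A=S+iT$, which the paper leaves implicit; this is a welcome added detail, not a divergence.
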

\begin{proof}
Let $S:=\Dmi^+ (Q_1)$ be the minimal operator  generated in $\gH$ by the formally selfadjoint  Dirac type expression
       \begin{gather}\label{3.31}
D(Q_1)y= J_{n} \frac{dy}{dx} + Q_1(x)y
       \end{gather}
and let $T$ be the multiplication operator in $\gH$ defined by
       \begin{gather}\label{3.31.0}
(Tf)(x)=Q_2(x)f(x), \quad f(\cdot)\in\gH.
       \end{gather}
Clearly, $S$ is a closed densely defined symmetric operator in $\gH$,
and in accordance with  \cite[Theorem 5.2]{LM03} $n_\pm(S)=\kappa_\pm$.
Moreover,  $T=T^*\in\B (\gH)$ and the lower and upper bounds
of $T$ are $\a_Q$ and $\b_Q$ respectively. Therefore  \eqref{3.30} yields
    \begin{gather}\label{3.31.1}
A=S+iT, \qquad B=S-iT.
       \end{gather}
It remains to apply  Lemma \ref{lem3.8.2}.
\end{proof}

\begin{theorem}\label{th3.12}
Assume that $n=2p$, expression \eqref{3.30} on $\R_+$ is almost formally selfadjoint and $J_n$ is of the form \eqref{3.11.0}. Then:
\begin{enumerate}\def\labelenumi{\rm (\roman{enumi})}
\item
For each operator $\Phi=\Phi^*\in\B (\C^p)$ the equalities (the boundary conditions)
            \begin{equation*}
\dom \wt A_\Phi=\{y\in\dom\Dma^+(Q): \cos \Phi\cdot y_1(0)+\sin \Phi \cdot y_2(0)=0 \}, \quad \wt A_\Phi=\Dma^+ (Q)\up \dom \wt A_\Phi
         \end{equation*}
define a quasi-selfadjoint extension $\wt A_\Phi\in \Ext \{\Dmi^+(Q),\Dmi^+ (Q^*)\}$ satisfying
           \begin{gather}\label{3.33}
\a_Q ||f||^2\leq \im (\wt A_\Phi f,f)\leq  \b_Q ||f||^2, \;\; f\in\dom \wt A_\Phi ,\;\;\;{\rm and} \;\;\;  (\C_{\a_Q,-}\cup \C_{\b_Q,+} )\subset \rho (\wt A_\Phi).
        \end{gather}
 \item
For each $\l\in \rho(\wt A_\Phi)$ there exists a unique operator solution
        \begin{gather}\label{3.34}
v_\Phi(t,\l)=(v_{1,\Phi}(t,\l), v_{2,\Phi}(t,\l))^\top: \C^p\to \C^p\oplus\C^p, \quad t\in\R_+
        \end{gather}
of the homogeneous equation \eqref{3.22} such that $v_\Phi(\cd,\l)h\in L^2(\R_+,\C^n), \; h\in \C^p,$ and
        \begin{gather}\label{3.35}
\cos\Phi\cdot v_{1,\Phi}(0,\l)+ \sin\Phi\cdot v_{2,\Phi}(0,\l)=I_p, \quad \l\in \rho(\wt A_\Phi).
        \end{gather}
\end{enumerate}
\end{theorem}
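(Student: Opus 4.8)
The plan is to reduce everything to the almost-selfadjoint structure already recorded in Proposition \ref{pr3.11} and then to invoke the boundary-triple machinery of Theorem \ref{th3.8.3} together with the Weyl-solution construction of Theorem \ref{th3.9}. Writing $Q_1=\re Q$ and $Q_2=\im Q$, Proposition \ref{pr3.11} gives $A:=\Dmi^+(Q)=S+iT$ and $B:=\Dmi^+(Q^*)=S-iT$, where $S:=\Dmi^+(Q_1)$ is a closed densely defined symmetric operator with $n_\pm(S)=\kappa_\pm=p$ (the value $p$ coming from $J_n$ being of the form \eqref{3.11.0}), and $T$ is multiplication by $Q_2$, a bounded selfadjoint operator whose lower and upper bounds are $\a_Q$ and $\b_Q$. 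Moreover $\dom A^*=\dom B^*=\dom S^*=:\cD_*$ and $B^*=\Dma^+(Q)$.

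First I would fix a boundary triple for $S^*$. Since $D(Q_1)$ is formally selfadjoint, the Lagrange identity \eqref{3.2} (with $Q_1^*=Q_1$) together with the form \eqref{3.11.0} of $J_n$ reduces to $(S^*f,g)-(f,S^*g)=(f_2(0),g_1(0))-(f_1(0),g_2(0))$, so that $\wh\Pi=\{\C^p,\G_0,\G_1\}$ with $\G_0y=y_1(0)$, $\G_1y=y_2(0)$ is a boundary triple for $S^*$, surjectivity being guaranteed by Lemma \ref{lem3.2}. Given $\Phi=\Phi^*\in\B(\C^p)$, I would then rotate this triple by setting $\G_0^\Phi=\cos\Phi\cdot\G_0+\sin\Phi\cdot\G_1$ and $\G_1^\Phi=-\sin\Phi\cdot\G_0+\cos\Phi\cdot\G_1$. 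Because $\cos\Phi$ and $\sin\Phi$ are commuting selfadjoint operators with $\cos^2\Phi+\sin^2\Phi=I_p$, the block matrix $\begin{pmatrix}\cos\Phi&\sin\Phi\\-\sin\Phi&\cos\Phi\end{pmatrix}$ is unitary on $\C^p\oplus\C^p$ and preserves the skew-Hermitian form $(\G_1f,\G_0g)-(\G_0f,\G_1g)$ governing the Green identity; hence $\wh\Pi^\Phi=\{\C^p,\G_0^\Phi,\G_1^\Phi\}$ is again a boundary triple for $S^*$, and consequently $S_\Phi:=S^*\up\ker\G_0^\Phi=S^*\up\{y:\cos\Phi\cdot y_1(0)+\sin\Phi\cdot y_2(0)=0\}$ is a selfadjoint extension of $S$ (cf.\ Remark \ref{rem3.8.1}).

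The key step is to feed $\wh\Pi^\Phi$ into Theorem \ref{th3.8.3}. This produces a boundary triple $\Pi^\Phi$ for the dual pair $\{A,B\}$ whose distinguished extension is $A_0=B^*\up\ker\G_0^\Phi=(S^*+iT)\up\ker\G_0^\Phi=S_\Phi+iT$. Since $\dom(S_\Phi+iT)=\dom S_\Phi=\{y\in\dom\Dma^+(Q):\cos\Phi\cdot y_1(0)+\sin\Phi\cdot y_2(0)=0\}$, this extension is exactly $\wt A_\Phi$. Theorem \ref{th3.8.3}(i) (which rests on Lemma \ref{lem3.8.2} applied to the selfadjoint $S_\Phi$, whose deficiency indices vanish) then yields \eqref{3.33}, namely the bounds $\a_Q\|f\|^2\le\im(\wt A_\Phi f,f)\le\b_Q\|f\|^2$ and $(\C_{\a_Q,-}\cup\C_{\b_Q,+})\subset\rho(\wt A_\Phi)$. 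The identity $\cos^2\Phi+\sin^2\Phi=I_p$ shows the pair $(\cos\Phi,\sin\Phi)$ is admissible, so by Corollary \ref{cor3.5} $\wt A_\Phi$ is a quasi-selfadjoint extension in $\Ext\{A,B\}$; this completes part (i).

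For part (ii) I would simply invoke Theorem \ref{th3.9}. By part (i) we have $\rho(\wt A_\Phi)\neq\emptyset$, and $\wt A_\Phi$ is precisely the quasi-selfadjoint extension attached, via \eqref{3.11.2}, to the admissible pair $C_1=\cos\Phi$, $C_2=\sin\Phi$. Hence Theorem \ref{th3.9}(ii) applies verbatim and delivers, for each $\l\in\rho(\wt A_\Phi)$, the unique operator solution $v_\Phi(\cdot,\l)\in L^2(\R_+,\B(\C^p,\C^{2p}))$ of \eqref{3.22} normalized by \eqref{3.35} and holomorphic in $\l$. The one genuinely nontrivial point throughout is the identification $A_0=\wt A_\Phi$: it requires knowing that $\cos\Phi\cdot y_1(0)+\sin\Phi\cdot y_2(0)=0$ is a \emph{selfadjoint} boundary condition for $S$, and this is exactly where the hypothesis $\Phi=\Phi^*$ enters, through the commutativity and Pythagorean identity that make the rotation symplectic and the pair admissible. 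Everything else is a direct citation of the previously established Theorems \ref{th3.8.3} and \ref{th3.9}.
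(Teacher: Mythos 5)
Your proposal is correct and follows essentially the same route as the paper: decompose $A=S+iT$, $B=S-iT$ via Proposition \ref{pr3.11}, build the $\Phi$-rotated boundary triple with $\G_0^B y=\cos\Phi\cdot y_1(0)+\sin\Phi\cdot y_2(0)$, identify $A_0=S_\Phi+iT=\wt A_\Phi$, obtain \eqref{3.33} from Theorem \ref{th3.8.3}(i), and get the Weyl solution from Theorem \ref{th3.9}(ii). The only (harmless) difference is that you verify the symplectic invariance $X^*J_nX=J_n$ of the rotation and cite Corollary \ref{cor3.5} explicitly, where the paper delegates both points to Theorem \ref{th3.9}(i).
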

\begin{proof}
Let $A:=\Dmi^+(Q), \;B:=\Dmi^+(Q^*) $, let $S$ and $T$ be the same as in the proof of Proposition \ref{pr3.11} and let $\cD_*:=\dom S^*$. Then by \eqref{3.31.1} and Lemma \ref{lem3.8.2}, (iii) the equalities  \eqref{3.15.3} hold.

Next, assume that $\Phi=\Phi^*\in \B (\C^p)$. Then the operator
        \begin{equation}\label{3.36}
X=\begin{pmatrix}  \cos\Phi & \sin\Phi \cr -\sin\Phi& \cos\Phi\end{pmatrix}:\C^p\oplus\C^p\to \C^p\oplus\C^p
        \end{equation}
satisfies $X^* J_n X=J_n$ and  by Theorem \ref{th3.9}, (i) applied to the  Dirac type expression \eqref{3.31} the equalities
\begin{gather}\label{3.36.1}
\G_0 y= \cos\Phi\cdot y_1(0) + \sin\Phi\cdot y_2(0), \quad   \G_1 y= -\sin\Phi\cdot y_1(0) +\cos\Phi\cdot  y_2(0), \quad y\in\cD_*
        \end{gather}
define a boundary triple $\wh\Pi=\{\C^p, \G_0,\G_1\} $ for $S^*$. Moreover, application of Theorem \ref{th3.9}, (i) to the dual pair $\{A,B\}$   implies that a collection $\Pi=\{\C^p\oplus\C^p,\G^B,\G^A\}$ with operators $\G^B=(\G_0^B,\G_1^B)^\top$ and $\G^A=(\G_0^A,\G_1^A)^\top$ defined by
         \begin{gather}
\G_0^B y=\G_0^A y=\cos \Phi\cdot y_1(0) + \sin\Phi \cdot y_2(0),\label{3.37} \\
\G_1^B y= \G_1^A y=-\sin\Phi \cdot y_1(0) +\cos \Phi \cdot y_2(0), \quad y\in\cD_*\label{3.37.1}
        \end{gather}
is a boundary triple for $\{A,B\}$ and for this triple  $A_0(=B^*\up \ker \G_0^B)=\wt A_\Phi$. Since the triples $\Pi$ and $\wh\Pi$ are connected via \eqref{3.15.4}, it follows from Theorem \ref{th3.8.3}, (i) that \eqref{3.33} holds.

Statement (ii) directly follows from Theorem \ref{th3.9}, (ii) applied to the extension $\wt A=\wt A_\Phi$.
 \end{proof}
Let the assumptions of Theorem \ref{th3.12} be satisfied and let $\Phi=\Phi^*\in \B (\C^p)$.
\begin{definition}\label{def3.13}
The operator solution $v(\cd,\l)=v_\Phi(\cd,\l),\; \l\in\rho (\wt A_\Phi),$ of \eqref{3.22} defined in Theorem \ref{th3.12} will be called the Weyl solution (with respect to the parameter $\Phi$).
\end{definition}
\begin{definition}\label{def3.14}
The operator function $M(\cd)=M_\Phi(\cd):\rho (\wt A_\Phi)\to  \B (\C^p)$ defined by
         \begin{gather}\label{3.38}
M_\Phi(\l)= -\sin\Phi\cdot v_{1,\Phi}(0,\l)+  \cos \Phi\cdot   v_{2,\Phi}(0,\l), \quad \l\in \rho (\wt A_\Phi)
         \end{gather}
will be called the Weyl function of  the  almost formally-selfadjoint Dirac type expression \eqref{3.30} on $\R_+$ (with respect to the parameter $\Phi$).
\end{definition}

\begin{proposition}\label{pr3.15}
Assume the hypothesis of Theorem \ref{th3.12}. Let $\Phi=\Phi^*\in \B (\C^p)$ and let $\ff(\cd,\l)$ and $\psi (\cd,\l)$ be  $\B (\C^p, \C^p\oplus\C^p)$-valued solutions of \eqref{3.22} with the initial values
         \begin{gather}\label{3.38.1}
\ff(0,\l)= \begin{pmatrix}  -\sin\Phi\cr \cos\Phi \end{pmatrix}, \qquad \psi(0,\l)= \begin{pmatrix}  \cos\Phi \cr \sin\Phi \end{pmatrix}.
        \end{gather}
Then:
\begin{enumerate}\def\labelenumi{\rm (\roman{enumi})}
\item
the Weyl function $M_\Phi(\cd)$ of  expression \eqref{3.30} can be defined as a unique operator-function $M_\Phi(\cd):\rho (\wt A_\Phi)\to \B (\C^p)$ such that \eqref{3.24.1} holds, i.e.,
         \begin{gather*}
\ff(\cd,\l)M(\l)+\psi (\cd,\l)\in L^2(\R_+,\B (\C^p,\C^n)), \quad \l\in\rho (\wt A_\Phi);
         \end{gather*}
\item
$M_\Phi(\cd)$ is holomorphic in $\rho (\wt A_\Phi)$  and satisfies the identities
         \begin{gather}
M_\Phi(\l)- M_\Phi^*(\mu)=\int_{\R_+}v_\Phi^*(x,\mu)((\l-\overline \mu)I_n-2i Q_2(x))v_\Phi(x,\l)\,dx, \quad \l,\mu\in \rho (\wt A_\Phi),\label{3.39}\\
\im M_\Phi(\l)=\int_{\R_+}v_\Phi^*(x,\l)(\im\l\cdot I_n- Q_2(x) )v_\Phi(x,\l)\,dx, \quad \l\in \rho (\wt A_\Phi).\label{3.40}
        \end{gather}
Moreover,
      \begin{gather}\label{3.41}
\im M_\Phi(\l)\geq \delta_\l I, \quad \l\in\C_{\b_Q,+};  \qquad  \im M_\Phi(\l)\leq \delta_\l' I, \quad \l\in\C_{\a_Q,-}
       \end{gather}
with some $\delta_\l>0$ and $\delta_\l'<0$ (depending on $\l$).
\end{enumerate}
\end{proposition}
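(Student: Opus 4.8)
The plan is to reduce the whole statement to the abstract results already proved, namely Theorem~\ref{th3.9}(iii) for part (i) and Theorem~\ref{th3.8.3}(ii) for part (ii), by transporting everything through the identifications set up in the proofs of Proposition~\ref{pr3.11} and Theorem~\ref{th3.12}. Recall from there that $A=\Dmi^+(Q)=S+iT$ and $B=\Dmi^+(Q^*)=S-iT$, where $S=\Dmi^+(Q_1)$ with $n_\pm(S)=\kappa_\pm=p$ and $T$ is multiplication by $Q_2(\cd)$ in $\gH=L^2(\R_+,\C^n)$; that the triple $\Pi$ for $\{A,B\}$ is linked to a triple $\wh\Pi$ for $S^*$ through \eqref{3.15.4}, so that Theorem~\ref{th3.8.3} applies; that the Weyl function $M(\cd)$ of $\Pi$ coincides with $M_\Phi(\cd)$ of Definition~\ref{def3.14}; and that the $\g$-field of $\Pi$ is given by $\g(\l)h=v_\Phi(\cd,\l)h$ via \eqref{3.26}.

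For part (i) I would verify that $\ff$ and $\psi$ with initial data \eqref{3.38.1} are exactly the solutions occurring in Theorem~\ref{th3.9}(iii) for the operator $X$ of \eqref{3.36}. Using $\Phi=\Phi^*$, so that $\cos\Phi$ and $\sin\Phi$ are commuting selfadjoint operators with $\cos^2\Phi+\sin^2\Phi=I_p$, a one-line matrix computation gives
\[
X\ff(0,\l)=\begin{pmatrix} 0\cr I_p\end{pmatrix},\qquad X\psi(0,\l)=\begin{pmatrix} I_p\cr 0\end{pmatrix},
\]
which is precisely \eqref{3.24}. Theorem~\ref{th3.9}(iii) then characterizes $M_\Phi(\cd)$ as the unique $\B(\C^p)$-valued function for which $\ff(\cd,\l)M_\Phi(\l)+\psi(\cd,\l)$ lies in $L^2$, giving (i).

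For part (ii), holomorphy of $M_\Phi$ on $\rho(\wt A_\Phi)$ is inherited from the general theory recorded after Definition~\ref{def3.8}. The identities \eqref{3.39}--\eqref{3.40} I would obtain by specializing \eqref{3.15.6}--\eqref{3.15.7}. Substituting $\g(\l)h=v_\Phi(\cd,\l)h$ and letting $T$ act as pointwise multiplication by $Q_2(x)$, I would pair the abstract right-hand side with $h,k\in\C^p$ and expand the inner product in $\gH$ as an integral:
\[
\bigl(\g^*(\mu)[(\l-\ov\mu)I_\gH-2iT]\g(\l)h,k\bigr)=\int_{\R_+}\bigl(v_\Phi^*(x,\mu)[(\l-\ov\mu)I_n-2iQ_2(x)]v_\Phi(x,\l)h,k\bigr)\,dx.
\]
Since $h,k$ are arbitrary, the operator identity \eqref{3.39} follows, and \eqref{3.40} follows in the same way from \eqref{3.15.7}. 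The estimates \eqref{3.41} can be read off directly from \eqref{3.15.8}, or re-derived from \eqref{3.40} using $\im\l\cdot I_n-Q_2(x)\ge(\im\l-\b_Q)I_n>0$ for $\l\in\C_{\b_Q,+}$ together with the injectivity of $\g(\l)$, which forces $\g^*(\l)\g(\l)>0$.

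Since the argument is essentially a transcription of the abstract identities, I do not expect a serious obstacle. The one point requiring care is the bookkeeping in the displayed integral: one must keep the roles of $\l$ and $\mu$ and the various adjoints straight so that the integrand appears as $v_\Phi^*(x,\mu)[\cdots]v_\Phi(x,\l)$ and not with arguments or conjugations interchanged, and one must confirm that $T$ really acts as multiplication by $Q_2$ on the range of $\g(\cd)$. Establishing at the outset that $M(\cd)=M_\Phi(\cd)$ and that $\Pi$ satisfies the hypotheses of Theorem~\ref{th3.8.3} is what makes both the uniqueness claim in (i) and the identities in (ii) immediate.
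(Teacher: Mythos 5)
Your proposal follows essentially the same route as the paper: part (i) is obtained by matching the initial data \eqref{3.38.1} with the conditions \eqref{3.24} for the operator $X$ of \eqref{3.36} (you check $X\ff(0,\l)$, $X\psi(0,\l)$ directly where the paper inverts $X$ -- the same computation) and invoking Theorem \ref{th3.9}(iii), while part (ii) specializes \eqref{3.15.6}--\eqref{3.15.8} from Theorem \ref{th3.8.3} using $\g(\l)h=v_\Phi(\cdot,\l)h$, the integral form of $\g^*(\l)$, and the fact that $T$ is multiplication by $Q_2$. The argument is correct and complete.
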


\begin{proof}
Let $A:=\Dmi^+ (Q), \; B:=\Dmi^+ (Q^*)$ and let $X$ be the   operator \eqref{3.36}. It was shown in the proof of Theorem \ref{th3.12} that the operators  $\G^B=(\G_0^B,\G_1^B)^\top$ and $\G^A=(\G_0^A,\G_1^A)^\top$ defined by
\eqref{3.37} and \eqref{3.37.1} form a boundary triple  $\Pi=\{\C^p\oplus\C^p,\G^B,\G^A\}$ for $\{A,B\}$. Since by \eqref{3.36} the operators $C_3$ and $C_4$ in \eqref{3.23.1} are $C_3=-\sin\Phi$ and $C_4=\cos\Phi$, it follows from Theorem \ref{th3.9}, (iii) and \eqref{3.38} that the Weyl function $M(\cd)$ of $\Pi$ coincides with $M_\Phi(\cd)$. Moreover,
        \begin{gather*}
X^{-1}=\begin{pmatrix}  \cos\Phi & -\sin\Phi \cr \sin\Phi& \cos\Phi\end{pmatrix}
        \end{gather*}
and hence the initial conditions \eqref{3.24} are equivalent to \eqref{3.38.1}. Now statement (i) follows from Theorem \ref{th3.9}, (iii).

Next, assume that $S$ and $T$ are the same as in the proof of Proposition \ref{pr3.11} and let $\wh\Pi=\{\C^p\oplus\C^p,\G_0,\G_1\}$ be the boundary triple \eqref{3.36.1} for $S^*$. Since $A$ and $B$ admit  representation  \eqref{3.31.1} and boundary triples $\Pi$ and $\wh\Pi$ are connected via \eqref{3.15.4}, it follows from Theorem \ref{th3.8.3} that identities \eqref{3.15.6} and \eqref{3.15.7} hold with $M(\l)=M_\Phi(\l)$ and $A_0=\wt A_\Phi$. By using \eqref{3.26} one can easily verify that
        \begin{gather*}
\g^*(\l)f(\cdot)=\int_{\R_+} v^*(t,\l)f(t)\, dt, \quad f(\cdot) \in \gH, \; \l\in\rho (\wt A_\Phi).
        \end{gather*}
This and \eqref{3.26}, \eqref{3.31.0} imply that \eqref{3.15.6} and \eqref{3.15.7} can be written in the form \eqref{3.39} and \eqref{3.40}, respectively. Finally, \eqref{3.41} is immediate from \eqref{3.15.8} and the equality $M(\l)=M_\Phi(\l), \; \l\in \C_{\b_Q,+}\cap \C_{\a_Q,-}$.
\end{proof}
\begin{corollary}\label{cor3.16}
Let $n=2p$,   and  let $D(Q) $ be an almost formally-selfadjoint  Dirac type expression \eqref{3.1} (or, equivalently, \eqref{3.30}) on $\R_+$
with
      \begin{gather}\label{3.44}
      J_n=\begin{pmatrix} -i I_p&0\cr 0 & iI_p \end{pmatrix}: \C^p\oplus\C^p\to \C^p\oplus\C^p
      \end{gather}
and let $\ff(\cdot, \l)$ and $\psi (\cdot,\l)$
be $\B (\C^p,\C^p\oplus\C^p)$-valued solution of \eqref{3.22} with the initial values
        \begin{gather*}
\ff(0,\l)=\begin{pmatrix}0\cr I_p \end{pmatrix}, \qquad \psi(0,\l)=\begin{pmatrix} I_p\cr 0 \end{pmatrix}.
        \end{gather*}
Then there exists a unique operator-function $M_s(\cd):\C_{\b_Q,+}\to \B (\C^p)$ such that
      \begin{gather}\label{3.45}
\ff(\cd,\l)M_s(\l)+\psi (\cd,\l)\in L^2(\R_+,\B (\C^p,\C^n)), \quad \l\in\C_{\b_Q,+} .
      \end{gather}
Moreover, $M_s(\cd)$ is holomorphic on $\C_{\b_Q,+}$ and satisfies $||M_s(\l)||<1, \; \l\in \C_{\b_Q,+}$.
\end{corollary}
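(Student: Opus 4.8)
The plan is to realize $M_s(\cd)$ as the suitably normalized Weyl function attached to the $L^2$-solution space of \eqref{3.22}, and to extract the contractivity $\|M_s(\l)\|<1$ directly from the Lagrange identity \eqref{3.2}. First I would record the relevant data: for $J_n$ of the diagonal form \eqref{3.44} one has $\kappa_+=\kappa_-=p$, so that Proposition \ref{pr3.11} applies and yields $\dom A^*=\dom B^*$, the inclusion $\C_{\b_Q,+}\subset\widehat\rho\{A,B\}$, and
$$\dim\gN_\l(B)=\dim\ker(\Dma^+(Q)-\l)=\kappa_+=p,\qquad \l\in\C_{\b_Q,+},$$
where $A=\Dmi^+(Q)$, $B=\Dmi^+(Q^*)$. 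Thus for each $\l\in\C_{\b_Q,+}$ the space of $L^2(\R_+,\C^n)$-solutions of \eqref{3.22} is exactly $p$-dimensional, and every such solution $v$ lies in $\dom\Dma^+(Q)=\dom\Dma^+(Q^*)$.

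The heart of the matter is an energy estimate. Fix $\l\in\C_{\b_Q,+}$ and $v\in\gN_\l(B)$. Applying \eqref{3.2} with $y=z=v$, and using $\Dma^+(Q)v=\l v$ together with $\Dma^+(Q^*)v=\l v-2iQ_2 v$ (since $Q-Q^*=2iQ_2$), the left-hand side equals $2i\bigl[(\im\l)\|v\|_\gH^2-(Q_2v,v)_\gH\bigr]$, while the boundary term $-(J_nv(0),v(0))$ for the diagonal matrix \eqref{3.44} equals $i\bigl(\|v_1(0)\|^2-\|v_2(0)\|^2\bigr)$. Dividing by $i$ I expect to obtain
$$\|v_1(0)\|^2-\|v_2(0)\|^2=2\int_{\R_+}\bigl[(\im\l)\|v(x)\|^2-(Q_2(x)v(x),v(x))\bigr]\,dx\ \ge\ 2(\im\l-\b_Q)\,\|v\|_\gH^2.$$
For $\im\l>\b_Q$ the right-hand side is strictly positive whenever $v\neq0$, so $\|v_2(0)\|<\|v_1(0)\|$ for every nonzero $L^2$-solution. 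I anticipate this single inequality to do most of the work: it produces both the well-definedness of $M_s$ and its contractivity.

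From the estimate the existence, uniqueness and bound follow quickly. The boundary map $v\mapsto(v_1(0),v_2(0))$ identifies $\gN_\l(B)$ with a $p$-dimensional subspace $\mathcal L_\l\subset\C^p\oplus\C^p$; by the estimate any $v\in\gN_\l(B)$ with $v_1(0)=0$ vanishes, so $\mathcal L_\l\cap(\{0\}\oplus\C^p)=\{0\}$ and hence $\mathcal L_\l=\graph M_s(\l)$ for a unique $M_s(\l)\in\B(\C^p)$. Equivalently there is a unique $L^2$-solution $\Psi_s(\cd,\l)$ with $\Psi_{s,1}(0,\l)=I_p$; writing it in the fundamental system $\ff,\psi$ of the corollary (with $\ff(0,\l)=(0,I_p)^\top$, $\psi(0,\l)=(I_p,0)^\top$) gives $\Psi_s=\ff(\cd,\l)M_s(\l)+\psi(\cd,\l)$ with initial value $(I_p,M_s(\l))^\top$, which is exactly \eqref{3.45}. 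Applying the estimate to $v=\Psi_s(\cd,\l)h$ for $h\neq0$ yields $\|M_s(\l)h\|<\|h\|$, and compactness of the unit sphere of $\C^p$ upgrades this to $\|M_s(\l)\|<1$.

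It remains to prove holomorphy, and here I would lean on the abstract machinery rather than on \eqref{3.44} being of the symplectic form \eqref{3.11.0}. Following the proof of Proposition \ref{pr3.11}, write $A=S+iT$ with $S=\Dmi^+(Q_1)$ (for which $n_\pm(S)=\kappa_\pm=p$) and $T$ the bounded selfadjoint multiplication by $Q_2$; a selfadjoint extension $S_0$ of $S$ exists and produces, via Lemma \ref{lem3.8.2} and Theorem \ref{th3.8.3}(i), an extension $A_0=S_0+iT\in\Ext\{A,B\}$ with $\C_{\b_Q,+}\subset\rho(A_0)$. The associated $\g$-field $\g(\cd)\colon\C^p\to\gN_\l(B)$ is holomorphic on $\rho(A_0)\supseteq\C_{\b_Q,+}$, and since $\Psi_s(\cd,\l)=\g(\l)\,\bigl(P_1\g(\l)\bigr)^{-1}$, where $P_1\colon\gN_\l(B)\to\C^p$ extracts the first boundary component and is invertible by the graph property above, both $\Psi_s$ and $M_s(\l)=\Psi_{s,2}(0,\l)$ are holomorphic on $\C_{\b_Q,+}$. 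The main obstacles I foresee are the careful bookkeeping of the diagonal $J_n$ (checking that \eqref{3.2} and Lemma \ref{lem3.2} are applicable verbatim, since they hold for arbitrary $J_n$) and the sign in the Lagrange computation; an alternative worth recording is to conjugate \eqref{3.44} into \eqref{3.11.0} by the fixed unitary $U=\frac1{\sqrt2}\begin{pmatrix} I_p & -iI_p\\ I_p & iI_p\end{pmatrix}$, under which $M_s$ becomes a Cayley transform of a Weyl function with positive imaginary part, so that $\|M_s(\l)\|<1$ can instead be read off from \eqref{3.41} of Proposition \ref{pr3.15}.
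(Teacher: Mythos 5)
Your proof is correct, but it takes a genuinely different route from the paper's. The paper does not argue directly with the diagonal matrix \eqref{3.44}: it conjugates by the fixed unitary $U=\tfrac{1}{\sqrt2}\begin{pmatrix} iI_p & I_p\cr -iI_p & I_p\end{pmatrix}$, which carries \eqref{3.44} into the symplectic form \eqref{3.11.0}, invokes Theorem \ref{th3.12} and Proposition \ref{pr3.15} (with $\Phi=0$) for the transformed expression $D(\wh Q)$ to obtain a Weyl function $M_0(\cd)$ holomorphic on $\C_{\b_Q,+}$ with $\im M_0(\l)\geq\delta_\l I>0$, and then \emph{defines} $M_s(\l)=(M_0(\l)-iI_p)(M_0(\l)+iI_p)^{-1}$; holomorphy and $\|M_s(\l)\|<1$ are then automatic from the Cayley transform, and \eqref{3.45} follows by pushing the Weyl solution $v_0$ back through $U$ and normalizing. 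You instead work with \eqref{3.44} head-on: the Lagrange identity \eqref{3.2} applied with $y=z=v\in\gN_\l(B)$ (legitimate, since $\dom\Dma^+(Q)=\dom\Dma^+(Q^*)$ by Proposition \ref{pr3.11} and $\Dma^+(Q^*)v=\l v-2iQ_2v$) gives the energy estimate $\|v_1(0)\|^2-\|v_2(0)\|^2\geq 2(\im\l-\b_Q)\|v\|^2$, and since $\kappa_\pm=p$ for the diagonal $J_n$ the trace space of the $p$-dimensional $\gN_\l(B)$ is forced to be the graph of a strict contraction $M_s(\l)$; your sign computations check out, and your holomorphy argument via the $\g$-field of the extension $A_0=S_0+iT$ from Theorem \ref{th3.8.3}(i) (composed with the trace maps, both invertibility and holomorphy of $P_1\g(\l)$ being available) closes the argument. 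What each approach buys: yours is more self-contained and exhibits the contractivity as a direct consequence of dissipativity at the boundary, essentially re-deriving in this special case the content of Remark \ref{rem3.17}(i) that $M_s$ is the Weyl function of the triple $\Pi_s$; the paper's is shorter because it reuses the already-built machinery \eqref{3.41} and gets existence, uniqueness, holomorphy and the bound in one stroke from the Cayley transform. You even record the paper's route as your ``alternative''; the only caveat is that your closing formula $\Psi_s(\cd,\l)=\g(\l)(P_1\g(\l))^{-1}$ should be read with $\g$ the $\g$-field of an \emph{arbitrary} admissible triple (the map $P_1$ need not coincide with its $\G_0^B$), which is how you set it up, so there is no gap.
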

\begin{proof}
One can easily verify that the equality $U=\frac 1 {\sqrt 2} \begin{pmatrix}  i I_p & I_p\cr -i I_p & I_p      \end{pmatrix}$
defines a unitary operator $U\in\B (\C^n)$ such that
        \begin{gather*}
 \wh J_n:=U^* J_n U=\begin{pmatrix} 0 & - I_p \cr I_p & 0     \end{pmatrix}.
         \end{gather*}
Let $\wh Q(x):=U^*Q(x)U=\wh Q_1(x)+i \wh Q_2(x)$ with $\wh Q_1(x)={\rm Re}\, \wh Q(x)$ and $\wh Q_2(x)=\im \wh Q(x)$, let
        \begin{gather*}
D(\wh Q)y:=\wh  J_{n} \frac{dy}{dx} + \wh Q_1(x)y+i\wh Q_2(x)y ,\quad x\in\R_+
        \end{gather*}
be the respective almost selfadjoint Dirac type expression and let $M_0(\cd)$ be the Weyl function of $D(\wh Q)$ (with respect to $\Phi=0$). Since obviously $\b_{\wh Q}=\b_Q$, it follows that $M_0(\cd)$ is defined on $\C_{\b_Q,+}$ and by \eqref{3.41} the equality (the Cayley transform of $M_0(\l)$)
        \begin{gather*}
M_s(\l)=(M_0(\l)-i I_p)(M_0(\l)+i I_p)^{-1}, \quad \l\in  \C_{\b_Q,+}
       \end{gather*}
defines a holomorphic operator-function $M_s(\cd):\C_{\b_Q,+}\to \B (\C^p)$ such that $||M_s(\l)||<1, \; \l\in \C_{\b_Q,+}$. Let $v_0(\cd,\l)\in L^2(\R_+,\B (\C^p,\C^n))$ be the Weyl solution of the homogeneous equation $D(\wh Q)y=\l y$ (with respect to $\Phi=0$) and let
          \begin{gather*}
v_s(x,\l)=U v_0(x,\l)\sqrt 2 (M_0(\l)+i I_p)^{-1}, \quad x\in\R_+, \;\; \l\in  \C_{\b_Q,+}.
        \end{gather*}
Clearly, $v_s(\cd,\l)$ is a solution of \eqref{3.22} and $v_s(\cd,\l)\in L^2(\R_+,\B (\C^p,\C^n))$. Moreover, since $\Phi=0$, it follows from \eqref{3.35} and \eqref{3.38} that $v_0(0,\l)=(I_p, M_0(\l))^\top$ and hence $v_s(0,\l)=(I_p, M_s(\l))^\top$. Therefore $v_s(x,\l)=\ff(x,\l)M_s(\l)+\psi (x,\l)$, which implies \eqref{3.45}.
\end{proof}
\begin{remark}\label{rem3.17}
{\rm (i)} Let $D(Q)$ be  an  almost formally-selfadjoint Dirac type expression \eqref{3.1} with $J_n$ of the form \eqref{3.44}, let $A:=\Dmi^+(Q), \; B:=\Dmi^+ (Q^*)$ and let $D_*=\dom A^*=\dom B^*$ (see Proposition \ref{pr3.11}). It follows from the Lagrange identity \eqref{3.2} and Lemma \ref{lem3.2} that a collection $\Pi_s=\{\C^p\oplus \C^p,\G^B,\G^A\}$ with operators $\G^B=(\G_0^B,\G_1^B )^\top$ and $\G^A=(\G_0^A,\G_1^A )^\top$ defined by
           \begin{gather*}
\G_0^B y=y_1(0), \quad  \G_1^B y=y_2(0), \quad   \G_0^A y= i y_2(0),\quad \G_1^A y= i y_1(0), \;\; y\in \cD_*
           \end{gather*}
is a boundary triple for $\{A,B\}$ (here $y_j(0)$ are taken from \eqref{3.11.1}). Moreover, one can easily prove that $M_s(\cd)$ is the Weyl function of $\Pi_s$.

(ii) The operator-function $M_s(\cdot)$ coincides with the Weyl function introduced in \cite{FKRS12,SSR13} for special almost formally-selfadjoint Dirac type expression \eqref{3.1}, \eqref{3.44} with $Q(x)$ of the form
           \begin{gather*}
Q(x)=i\begin{pmatrix} 0 & q(x) \cr q^*(x) & 0  \end{pmatrix}:\C^p\oplus\C^p \to \C^p\oplus\C^p.
           \end{gather*}
 \end{remark}

{\bf Acknowledgement}
The publication was prepared with the support of the RUDN
University Program 5-100.  BMB, MK, MM, IGW  thank the Isaac Newton Institute at Cambridge  for support to attend  the  Inverse problems  programme held there in 2011  where much of  the initial work was undertaken.

\end{document}